\newcommand{\Rmnum}[1]{\expandafter\@slowromancap\romannumeral #1@}
\def\widebar{\accentset{{\cc@style\underline{\mskip10mu}}}}
\newcommand{\E}{\mathbb{E}}
\newcommand{\F}{\mathcal{F}}
\newcommand{\T}{\mathrm{T}}
\newcommand{\ds}{\,\mathrm{d}}
\newcommand{\e}{\varepsilon}
\newcommand{\R}{\mathbb{R}}
\newcommand{\s}{\mathcal{S}}
\newcommand{\ee}[2]{e^{-i2\pi #1#2/T}}
\newcommand{\ei}[2]{e^{i2\pi #1#2/T}}
\newcommand{\Fda}[2]{\widehat{F}(\mathrm{d} A_{#1})_{#2}^n}
\newcommand{\Fdm}[2]{\widehat{F}(\mathrm{d} M_{#1})_{#2}^n}
\newcommand{\Fdx}[2]{\widehat{F}(\mathrm{d} X_{#1})_{#2}^n}
\newcommand{\Fc}[2]{\widehat{F}(c_{#1})_{#2}^{n,N}}
\newcommand{\chat}[2]{\widehat{c}^{n,N,M}_{#1}(#2)}
\newcommand{\dd}[2]{d^{n,N}_{#1}(#2)}
\newcommand{\thb}[2]{\theta^n_{#1}(#2)}
\newcommand{\ta}[2]{\tau^{#1}_{#2}}
\newcommand{\taa}[4]{\ta{#1}{#3-1}\vee\ta{#2}{#4-1}}
\newcommand{\tab}[4]{\ta{#1}{#3}\wedge\ta{#2}{#4}}
\newenvironment{Assump}[1]
	{\assumption}
	{\endassumption}
\theoremstyle{plain}
\newtheorem{lem}{Lemma}
\newtheorem{thm}{Theorem}
\newtheorem{prop}{Proposition}
\newtheorem{corol}{Corollary}
\theoremstyle{definition}
\newtheorem{remk}{Remark}
\newtheorem{algo}{Algorithm}
\numberwithin{equation}{section}
\begin{document}
\begin{frontmatter}
\title{The Fourier Transform Method for Volatility Functional Inference by Asynchronous Observations}
\runtitle{Volatility Functionals by spectrum}
% ~/textmf/tex/latex/ectaart/ectaart.tex line 120 & 121

% indicate corresponding author with \corref{}
\begin{aug}
	\author{\fnms{Richard Y.} \snm{Chen}\corref{} \ead[label=e1]{yrchen@uchicago.edu}\thanksref{t1}}
	\thankstext{t1}{The author would like to acknowledge the supports from \textit{National Science Foundation grant DMS 17-13129} and \textit{Stevanovich Fellowship} from the University of Chicago, and gratefully thanks Per A. Mykland for his valuable discussions and unwavering support to pursue this topic.}
	\address{Department of Statistics, University of Chicago, Chicago, IL 60637, U.S.A., \printead{e1}.}
\end{aug}
\runauthor{Chen, Y. R.}
%\author{\fnms{Richard Y.} \snm{Chen}\footnote{Supported by \textit{National Science Foundation grant DMS 17-13129} and \textit{Stevanovich fellowship} from the University of Chicago.}\footnote{Department of Statistics, University of Chicago, Chicago, IL 60637, U.S.A., \href{mailto:yrchen@uchicago.edu}{yrchen@uchicago.edu}.}}
\date{October, 2019}
%\maketitle

\begin{abstract}
	We study the volatility functional inference by Fourier transforms. This spectral framework is advantageous in that it harnesses the power of harmonic analysis to handle missing data and asynchronous observations without any artificial time alignment nor data imputation. Under conditions, this spectral approach is consistent and we provide limit distributions using irregular and asynchronous observations. When observations are synchronous, the Fourier transform method for volatility functionals attains both the optimal convergence rate and the efficient bound in the sense of Le Cam and H\'ajek. Another finding is asynchronicity or missing data as a form of noise produces ``interference'' in the spectrum estimation and impacts on the convergence rate of volatility functional estimators. This new methodology extends previous applications of volatility functionals, including principal component analysis, generalized method of moments, continuous-time linear regression models et cetera, to high-frequency datasets of which asynchronicity is a prevailing feature.
\end{abstract}
\begin{keyword}
\kwd{volatility}
\kwd{functional estimation}
\kwd{missing data}
\kwd{asynchronicity}
\kwd{Fourier transform}
\kwd{stable central limit theorems}
\kwd{interference}
\end{keyword}
\end{frontmatter}
%\noindent\textbf{Key words:} volatility, nonparametrics, functional estimation, missing data, asynchronicity, Fourier transform, stable central limit theorems.

\section{Introduction}
Volatility inference from high-frequency financial data have drawn vigorous academic efforts since the beginning of this new millennium. Volatility is a pivotal measure of risk, and the pillar of many financial models. High-frequency datasets offer promising venues for accurate volatility proxies. Early developments focused on integrated volatility \citep{abdl01, abdl03, bs02, bs04}. More recent literature advanced toward the estimation problems of spot volatility \citep{fw07,k10,apps12,aj14,mmr15,bhmr19}. Based on nonparametric estimates of spot volatility, estimation of the integrated volatility functional
\begin{equation}\label{def.S(g)}
	S(g)_T = \int_0^T g(c(t))\ds t
\end{equation}
is possible by plugging in nonparametric spot estimates into the functional $g(\cdot)$ and forming a Riemann sum.

In this paper, we employ the ideas and tools from harmonic analysis to study the statistical inference for volatility functionals defined as (\ref{def.S(g)}) when the data is observed both irregularly and asynchronously from a multivariate It\^o semimartingale. Specifically, $c(t)$ is the spot\\
 volatility matrix (instantaneous covariance at time $t$) of a continuous multivariate It\^o semimartingale; $g(\cdot)$ is a smooth functional of econometric interests, for instance, $g$ could be a map from the spot volatility matrix to spot betas, or from the volatility matrix to its distinct eigenvalues or the corresponding eigenvectors, etc.

\textit{What are the motivations for studying volatility functionals?} Many financial time series applications can be formulated as volatility functionals. Their importance and multifarious utilities lie in the fact that volatility is one of the central concepts in modern-day financial theories and practices, many empirical investigations are conducted by measuring volatility, examples include but not limited to measuring market risk, model calibration, portfolio selection, option pricing. Recent applications include principal component analysis \citep{ax19}, linear regression \citep{ltt17}, specification tests \citep{ltt16}, generalized method of moments \citep{lx16}. Volatility functionals can also be used in quantifying statistical uncertainties of various volatility estimators, such as quarticity.

\textit{What have been done about volatility functional inference?} Previously, \cite{jr13} proposed a nonparametric plug-in methodology for functional estimation, where the plug-ins are finite differences of realized variances (hereafter RV). The Riemann sum of functionals of plug-ins with explicit bias correction satisfy an asymptotic theory which is rate-optimal and semiparametrically efficient. Recently, \cite{llx19} introduced jackknife for bias correction and a simulation-based method for variance estimation, which are derivative-free and greatly facilitate applications. \cite{ll17} studied efficient functional estimation when volatility exhibits long-memory property, \cite{y18} utilized matrix calculus to ease the burden of computing derivatives and removed various bias terms to allow a more flexible range of the tuning parameter. More recently, \cite{c19a} uses pre-averaging method to provide noise-robust and rate-optimal functional estimators and extends the inferential theory of volatility functionals to the setting of noisy data.

So far, the methods and inferential theories of volatility functionals rely on the setup that the data are synchronous, thus previous applications of volatility functionals required some synchronization procedures before calculating the estimators. Some synchronization procedures, including the previous-tick method, result in reduced sample sizes and possible synchronization bias such as the Epps effect. The loss of data and bias become more pronounced for data of illiquid assets. 

\textit{What are new in this paper?} The methodology of this paper enables valid volatility functional inference using irregular and asynchronous observations from multiple time series. The methodology of this paper employs the Fourier transform to translate the information contained in the data from the time domain to the frequency domain. The most significant advantages of the frequency-domain methodology over the time-domain counterparts include the following:
\begin{itemize}
	\item operations in the frequency domain bypass the troublesome asynchronicity encountered in the time domain and circumvent data synchronization and imputation, thereby offer an elegant approach to harness more prevalent asynchronous data;
	\item the frequency-domain technique for estimating spot volatility are based on the integration-type operations (Fourier transform is a integral transform) rather than finite differences of RVs used by time-domain techniques, thus it is numerically more stable.
\end{itemize}
The handling of asynchronicity by the Fourier transform method will be further discussed in the rest of this paper. To demonstrate the numerical stability of the Fourier transform method, figure \ref{fig.chatcurve} compares the estimates of one volatility trajectories by finite differences of RV and the Fourier transform method. In contrast to the estimates based on RV, the Fourier transform method has much less jiggling and shows high fidelity to the true sample path of volatility.
\begin{figure}
	\center
	\caption{Spot volatility estimator: realized variance and Fourier transform}
	\includegraphics[width=.8\textwidth]{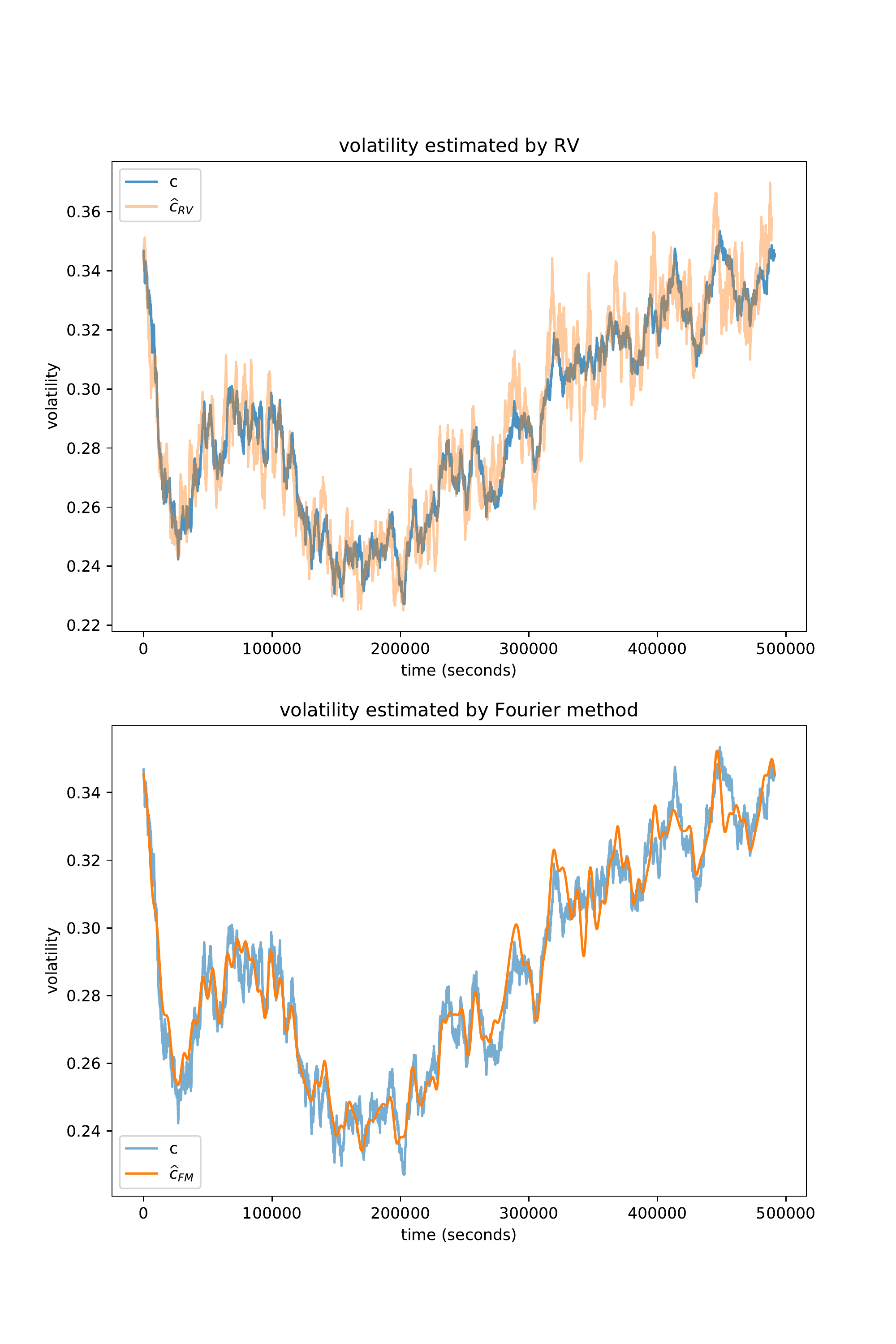}\\
	The sample path of volatility in the simulation is labeled by $c$, it is driven by a fractional Brownian motion with the Hurst parameter $H=.56$; the estimate $\widehat{c}_{RV}$ is by finite differences of RVs which form a rough path; the estimates $\widehat{c}_{FM}$ is by the Fourier transform method.
	\label{fig.chatcurve}
\end{figure}

In addition to the methodological contribution, this paper includes the following results:
\begin{enumerate}
	\item identifying the finite-sample mean-square rate of the spot co-volatility estimation when the data are observed asynchronously; due to asynchronous observations the rate is different from the rate established by \cite{mr15} in the univariate setting;
	\item establishing asymptotic distributional results for volatility functional, revealing: (1) convergence rates of functional estimators as determined by a tuning parameter in the frequency domain and (2) asymptotic variances and how they are impacted by the temporal spacing of observations; 
	\item discovering the fundamental limits on the amount of frequency-domain information that can be utilized without producing bias in the asynchronous setting and the resultant convergence rate of the Fourier transform method;
	\item achieving the optimal convergence rate and the efficiency bound as \cite{jr13}, \cite{llx19} when the observations are synchronous; in the case of asynchronous observations we show the functional estimator can converge with the optimal rate but is biased. 
\end{enumerate}

The methodology and its inferential theory of this paper have their roots in several foundational papers. The nonparametric plug-ins are due to \cite{mm02,mm09}, the former proposed a spot volatility estimator using trigonometric series based on the premise of Fourier analysis, the later formulated this method in terms of complex exponentials; the Fourier estimation method for volatility was further developed by \cite{cg11}. The asymptotic results are central limit theorems of the stable type, cf. \cite{j97}; \cite{jp98} provided stable convergence theorems for discretized solutions to some stochastic differential equations.

%the concept of \textit{quadratic variation of time} introduced by \cite{mz06}, which reveals of effects of irregular and asynchronous observations in large sample situations, also plays an essential role in the formulation of the stable central limit theorems of this paper.

This paper is organized as follows:
\begin{itemize}
	\item section \ref{sec:setting} sets up notation and states assumptions for the rest of the paper, it serves as a reference session and can be skipped in the first reading; 
	\item section \ref{sec:method} formally introduces and explains the frequency-domain method for volatility matrix; 
	\item section \ref{sec:consistency} establishes the mean-square rate of spot volatility matrix in the asynchronous setting; and shows the consistency of estimators of volatility spectrum, spot volatility and volatility functionals;
	\item section \ref{sec:func} establishes the second-order asymptotic results, particularly section \ref{sec:uni} shows the stable central limit theorem for functionals of an element in volatility matrix, section \ref{sec:multi} gives the result for general functionals of the whole volatility matrix and shows the Fourier transform method enjoys the optimal rate and achieves the efficient bound in the synchronous setting.
\end{itemize}

\section{Setting}\label{sec:setting}
This paper considers time series data from a fixed time window that can be modeled by a continuous It\^o semimartingale defined on a filtered probability space:
\begin{equation}\label{def.X}
    X(t) = X(0) + \int_0^tb(u)\ds u + \int_0^t\sigma(u)\ds W(u),
\end{equation}
where $b(u)\in\R^d$, $\sigma(u)\in\R^{d\times d'}$ with $d\le d'$, $W$ is a $d'$-dimensional standard Brownian motion; the spot volatility (instantaneous covariance matrix) is $c(u)=\sigma(u)\sigma(u)^\T\in\s^+_d$, where $\s^+_d$ denotes the convex cone of $d\times d$ positive-semidefinite matrices.

Generally, our observations from different components of the multivariate process (\ref{def.X}) are asynchronous, and the sample sizes are different across dimensions. Next, we will introduce the notation for the irregular and asynchronous temporal spacing.

\subsection{Notations}\label{sec.notation}
Here are the mathematical notations we will use. $i=\sqrt{-1}$ is the imaginary unit; for a complex number $z$, $\overline{z}$ is its complex conjugate; for a real number $x$, $\lfloor x\rfloor$ is its integer part; for a matrix $\mathbf{L}$, $\mathbf{L}^\T$ is its transpose; for $x,y\in\R$, $x\wedge y=\min(x,y)$, $x\vee y=\max(x,y)$; $a_n\asymp b_n$ means both $\{a_n/b_n\}$ and $\{b_n/a_n\}$ are bounded sequences. For a function $F$ defined on $\R^{d}$ (resp. $\R^{d\times d}$), $\partial_jF$ (resp. $\partial_{jk}F$) is its derivative with respect to the $j$-th (resp. $(j,k)$-th) argument. $ \overset{\mathbb{P}}{\longrightarrow}$ represents convergence in probability, $\overset{\mathcal{L}-s}{\longrightarrow}$ stands for stable convergence in law, $\mathcal{MN}(\mu,\Sigma)$ represents a mixed normal distribution with random mean $\mu$ and covariance $\Sigma$.

Below are notations for observations and temporal spacing used throughout this paper:
\begin{itemize}
    \item if $U$ is a $\R^d$-valued  process, $U_j$ is the $j$-th component of $U$; if $U$ is a $\R^{d\times d}$ or $\R^{d\times d'}$-valued process, $U_{jk}$ and $U_{j\cdot}$ are the $(j,k)$-th component and the $j$-th row of $U$, respectively;
    \item $\mathcal{T}_j=\{\tau^j_h,\,h=0,\cdots,n_j\}$ is the set of observation times of the $j$-th component process $X_j$; without loss of generality, by time translation let $\min_j\tau^j_0=0$ and $\max_j\tau_{n_j}=T$, i.e. the observation period is $[0,T]$; 
    \item $I^j_h = (\tau^j_{h-1},\tau^j_h]$ is time interval between two consecutive observations of $X_j$;
    \item $\Delta^j_h = \tau^j_h-\tau^j_{h-1}$ is the length of $I^j_h$, let $\Delta^j=\max_h \Delta^j_h$ be the observational mesh of $X_j$;
    \item $\underline{n}=\min_j n_j$ and $n=\max_j n_j$ are the smallest and largest sample sizes among all the dimensions respectively; $\Delta(n)=\max_j \Delta^j$ is the largest mesh size;
    \item $\delta^j_h$ is the first-order difference operator according to the observational times of $X_j$, i.e., given a generic scalar process $U$, $\delta^j_h(U) = U(\tau^j_h) - U(\tau^j_{h-1})$ is the increment of the process $U$ over the time interval $I^j_h$.
\end{itemize}

\subsection{Assumptions}
Here we state the assumptions on the spacing of observational times, the sample paths of the volatility process, and a regularity condition on the functionals.

In this paper, we study the in-fill asymptotics, i.e., we suppose $T$ is a finite constant, and within the finite time interval $[0,T]$ the smallest sample size of all dimensions $\underline{n}\to\infty$ ($\Delta(n)\to0$) in the asymptotic analysis. We assume that, for each $n$ the temporal spacings of all dimensions are of the same magnitude. We also need to assume that the observational times are independent of the sample path (exogenous), this precludes the possibility that the observational times of the process depends on the process itself, for example, hitting times.
\begin{Assump}{T}[temporal spacing]\label{a.T}
$\Delta(n)\to0$ as $n\to\infty$. For some finite constant $K$,
	\[ \frac{\Delta(n)}{\min_{j}\min_{h}{\Delta^j_h}}<K. \]
Let $\F=\sigma(X_t,\,t\in[0,T])$ and $\mathcal{G}=\sigma(\mathcal{T}_j,\,j=1,\cdots d)$, $\forall A\in\F,\,B\in\mathcal{G}$, $A$ and $B$ are independent events.
\end{Assump}

The following assumption is on sample path continuity and local boundedness in a convex subspace. 
\begin{Assump}{U}[volatility continuity and local boundedness]\label{a.U}
There is a sequence of pairs $(\tau_m,\s_m)$, where $\tau_m$ is a stopping time and $\tau_m\nearrow\infty$, each $\s_m\subset\mathcal{S}^+_d$ is a compact subset of positive semidefinite matrices such that
	\[t\in[0,\tau_m] \Rightarrow \|b(t)\|+\|c(t)\|\le m,\, c(t)\in\s_m.\]
\end{Assump}

Given a continuous function $f$, its \textit{modulus of continuity} $\omega_f(\Delta)$ is defined as
\begin{equation}\label{modulus.continuity}
	\omega_f(\Delta) = \sup_{|x-y|\le\Delta}\|f(x)-f(y)\|.
\end{equation}

In order to establish an inferential theory, it is necessary for us to put constraints on the smoothness of the sample paths of volatility.
\begin{Assump}{V-$\alpha$}[volatility regularity]\label{a.V}
The sample path of the volatility $c$ is continuous almost surely. The modulus of continuity of $c$ satisfies
 	\[\omega_c(\Delta) \le \Delta^\alpha,\,\,\alpha>0.\]
\end{Assump}

\begin{remk}
Assumption \ref{a.T}, \ref{a.U} are needed for consistency; assumption \ref{a.V} with $\alpha>1/2$ is further needed for central limit theorems. Assumption \ref{a.U}, \ref{a.V} can be rephrased that the volatility as a function of time belong to the H\"older ball
	\[ H^\alpha(K) \coloneqq \left\{f\in C([0,T])\left| \sup_{t\in[0,T]}\|f(t)\|+ \sup_{t\ne u}\frac{\|f(t)-f(u)\|}{|t-u|^\alpha} \le K \right.\right\} \]
	for some $K>0$.
\end{remk}

We require that the functionals $g:\s^+_d\mapsto\R^r$ satisfy
\begin{equation}\label{cond.g}
	g\in\mathcal{C}^2(\s)
\end{equation}
where $\s$ is a compact subspace of $\s^+_d$, 
%$\cup_m\s_m\subset\s$ 
$\s\supset\cup_m\s^\epsilon_m$ for some $\epsilon>0$, $\s_m^\epsilon=\big\{A\in\mathcal{S}^+_d: \inf_{M\in\s_m}\|A-M\|\le\epsilon\big\}$ is the $\epsilon$-enlargement of the subspace $\s_m$ 
and $\s_m$ is identified in assumption \ref{a.U}. Note that $c(t)\in\s_m$ if $t\le\tau_m$, hence any consistent estimation of $c(t)$ lies in the subspace $\s_m^\epsilon$ in large samples.

For instance, differentiable functions whose derivatives are of polynomial growth satisfy (\ref{cond.g}), i.e., if for some constants $K>0$ and $r\ge2$,
\begin{equation*}
	\|\partial^hg(c)\|\le K(1+\|c\|^{r-h}), \qquad h=0,1,2,
\end{equation*}
then $g\in\mathcal{C}^2(\s)$.

\section{Fourier Method}\label{sec:method}
Given a functional of econometric interest and a nonparametric estimator of spot volatility, we construct our functional estimator via the plug-in framework of \cite{jr13}. In this framework, computing a functional estimator entails (i) computing the nonparametric estimates of spot volatility at various time points; (ii) plugging the nonparametric estimates into the functional and computing the Riemann sum.

The spot volatility estimator is a crucial element in volatility functional estimation. For a given functional, the large sample properties of the functional estimator largely relies on the asymptotics of the nonparametric estimator of spot volatility. In this paper, to cope with asynchronicity and generalize the framework of \cite{jr13}, we choose the Fourier method to compute nonparametric estimates of spot volatility. The Fourier method for volatility function estimation comprises of 3 steps:
\begin{enumerate}
	\item Estimate the Fourier coefficients of volatility (volatility spectrum);
	\item Estimate the spot volatility from the estimates of its Fourier coefficients;
	\item Plug in the estimates of spot volatility and evaluate the functionals.
\end{enumerate}

\subsection{Volatility spectrum by Bohr convolution}
Before presenting and explaining the Fourier method, we give a quick review on Fourier transform, Fourier series, and a result in approximation theory.
Given a function $f$ defined on $[0,T]$, for $q\in\mathbb{N}^+$, define its \textit{Fourier transform} and \textit{Fourier-Stieltjes transform} as
\begin{equation}\label{def.FT}
\begin{array}{lcl}
	F(f)_q           &\coloneqq& \int_0^T f(t)\,\ee{q}{t}\ds t \nonumber\\
	F(\mathrm{d}f)_q &\coloneqq& \int_0^T\ee{q}{t}\ds f(t)
\end{array}
\end{equation}
If $f(0)=f(T)$, it can be expanded into \textit{Fourier series}:
\begin{equation}\label{def.IFT}
	f(t) = \frac{1}{T}\sum_{q=-\infty}^\infty F(f)_q\,\ei{q}{t}
\end{equation}

Define the following function approximation involving finite Fourier series\footnote{It is the \textit{Ces\`aro sum} on Fourier series. We are using Ces\`aro sum to ensure uniform convergence of (\ref{approximation}) and the Fourier method (\ref{def.chat}) for spot volatility; see section \ref{sec:consis.spot}. Its effect can be expressed through \textit{Fej\'er kernel}. Fej\'er kernel is a summability kernel (also a \textit{delta sequence}) whose basic properties are summarized in appendix \ref{apdx:trig}.}
\begin{equation}\label{def.fhat}
	\widehat{f}^M(t) \coloneqq \frac{1}{T}\sum_{q=-M+1}^{M-1}\Big(1-\frac{|q|}{M}\Big)F(f)_q\,\ei{q}{t}
\end{equation}
By (\ref{rep.fhat}) and lemma \ref{lem.Fejer.uc}, we have for some $K>0$,
\begin{equation}\label{approximation}
	\sup_{t\in[0,T]}\big|\widehat{f}^M(t) - f(t)\big| \le K\omega_f(1/M),
\end{equation}
where $\omega_f$ is the modulus of continuity defined in (\ref{modulus.continuity}).

In statistical applications, according to \cite{mm02, mm09}, we can approximate the Fourier-Stieltjes transform of $X$ and the Fourier transform of the process $c$ over $[0,T]$ by the following quantities:
\begin{eqnarray}
	\Fdx{j}{s} &\equiv& \sum_{h=1}^{n_j}\delta^j_h(X_j)\,\ee{s}{\tau^j_{h}}, \label{def.FShat}\\
	\Fc{jk}{q} &\equiv& \frac{1}{2N+1}\sum_{s=-N}^N\Fdx{j}{q-s}\times\Fdx{k}{s}. \label{def.Fhat}
\end{eqnarray}
We call (\ref{def.Fhat}) as \textit{spectrum estimator}. The available frequency coordinates for $\Fdx{j}{s}$ are $0,\pm1,\cdots$, $\pm\lfloor n_j/2\rfloor$, and given $N\le\lfloor n_k/2\rfloor$ the available frequency coordinates for $\Fc{jk}{q}$ are $0,\pm1,\cdots$, $\pm(\lfloor n_j/2\rfloor-N)$.

\begin{remk}
\cite{phl16} generalized the Bohr convolution in volatility spectrum estimation by applying a spectral kernel function $\Phi$,
\begin{equation*}
	\Fc{jk}{q} \equiv \frac{1}{2N+1}\sum_{s=-N}^N \Phi\Big(\frac{s}{N}\Big)\, \Fdx{j}{q-s} \times \Fdx{k}{s}
\end{equation*}
where the spectral kernel function $\Phi$ satisfies that 
\begin{equation}\label{cond.Phi}
\left\{\begin{array}{l}
	\Phi(w)\ge 0,\, w\in[-1,1] \\
	\int_{-1}^1\Phi(w)\ds w=1 \\
	\int_{-1}^1w\Phi(w)\ds w=0 \\
	\int_{-1}^1|\Phi(w)|^2+|w^p\Phi(w)|^2\ds w<\infty,\,p=1,2
\end{array}\right.
\end{equation}
The spectrum estimator (\ref{def.Fhat}) is a special case with the constant spectral kernel function $\Phi=1$. For our purpose of volatility functional estimation, we choose to use (\ref{def.Fhat}). The reason is that the constant spectral kernel function minimizes the asymptotic variance of our volatility functional estimator. The variance-minimizing property is implied by (\ref{cond.Phi}) and \textit{Parseval's identity}.
\end{remk}

\subsection{Spot volatility by Fourier transform}
Based on the Fourier coefficient estimates $\Fc{jk}{q}$'s, the spot volatility can be estimated by \textit{Fourier-Fej\'er inversion}
\begin{equation}\label{def.chat}
    \chat{jk}{t} \equiv \frac{1}{T}\sum_{q=-M+1}^{M-1}\Big(1-\frac{|q|}{M}\Big)\Fc{jk}{q}\,\ei{q}{t}
\end{equation}
where $M\le\lfloor n_j/2\rfloor-N+1$. In the rest of this paper, we call (\ref{def.chat}) as \textit{spot estimator}.

By defining the following vector and matrix
\begin{eqnarray}\label{def.Fc.mat}
	\Fdx{}{s} &\equiv& \big[\Fdx{1}{s},\cdots,\Fdx{d}{s}\big]^\T \nonumber\\
	\Fc{}{q}  &\equiv& \frac{1}{2N+1}\sum_{s=-N}^N\Fdx{}{q-s}\cdot\widehat{F}(\mathrm{d}X)_s^{n,\T},
\end{eqnarray}
we can express the elementwise defined estimator $\chat{}{t}=\big[\chat{jk}{t}\big]_{jk}$ as 
\begin{equation*}
	\chat{}{t} = \frac{1}{T}\sum_{q=-M+1}^{M-1}\Big(1-\frac{|q|}{M}\Big)\Fc{}{q}\,\ei{q}{t}
\end{equation*}
%whence we can see that instantaneous covariance estimator $\widehat{c}^{n,N,M}$ is a positive semidefinite matrix. 

\begin{remk}
The estimator $\widehat{F}(\mathrm{d}X_j)^n_s$ is the discrete Fourier transform (hereafter DFT) of the increments of $X_j$; the spectrum estimator $\Fc{jk}{q}$ is based on the idea akin to that of \textit{Bohr convolution}, i.e., a scaled convolution of the finite sequences $\Fdx{j}{s}$'s and $\Fdx{k}{s}$'s; the spot estimator $\chat{jk}{t}$ is the $M$-order Ces\`aro sum of inverse discrete Fourier transforms (hereafter IDFT) of the Fourier coefficient estimates via Fej\'er kernel.
\end{remk}

\begin{remk}\label{remk.tuning}
We have 1 tuning parameter $N$ for the spectrum estimator $\Fc{jk}{q}$ and 2 tuning parameters $N$ and $M$ for the spot estimator $\chat{jk}{t}$. 
\begin{itemize}
	\item $N$ is the ``level of averaging'', it dictates how many Fourier-Stieltjes transform estimates we use in estimating Fourier coefficients of volatility; by the law of large a higher $N$ leads to a more accurate Bohr convolution as an estimator, but since each item in the Bohr convolution carries a discretization error as an estimator of the corresponding Fourier-Stieltjes transform, $N$ can not be too large; this is quantitatively discussed to the first order in section \ref{sec:consis.spec};
	\item $M$ is the number of Fourier coefficient estimates used in approximating the spot volatility; the more harmonics are taken into account the better function approximation can be, yet we only possess estimates of Fourier coefficients, so we requires $M$ to be large enough but not too large; see (\ref{cond.NM}).
\end{itemize}

\begin{remk}
There are some fundamental constraints on $N$ and $M$ (the limited number of Fourier coefficients that can be estimated and used) due to discrete observations, namely,
\begin{equation*}
\left\{\begin{array}{lcl}
	N &\le& \lfloor n_k/2\rfloor \wedge \big(\lfloor n_j/2\rfloor-M+1\big)\\
	M &\le& \lfloor n_j/2\rfloor-N+1.
\end{array}\right.
\end{equation*}
	
First, we start from the Fourier-Stieltjes transform (\ref{def.FShat}), in order to avoid aliasing due to discrete sampling of the continuous-time signal (i.e., the digital signal versus its analog counterpart), we only compute $\Fdx{j}{s}$ for $|s|\le\lfloor n_j/2\rfloor$\footnote{It is the so-called \textit{Nyquist frequency (folding frequency)}, which is the highest frequency coordinate without the effect of aliasing.}.

Second, due to the form of convolution in the definition (\ref{def.Fhat}) and the constraint by the Nyquist frequency, it follows that $N\le\lfloor n_k/2\rfloor$.

Third, according to the definition (\ref{def.chat}) and the Nyquist frequency, the number of Fourier coefficient satisfies $M\le\lfloor n_j/2\rfloor-N+1$.\footnote{An interesting modification is to use different $N$'s for different frequency coordinate $q$'s. For instance, when estimating a low frequency we can use a relative large $N$ to increase accuracy. It does no alter any asymptotic property but does improve finite-sample accuracy. However, it is a technical complication to the current Fourier transform method and left to future study.}
\end{remk}

To ensure the consistency of the spectrum estimator and the asymptotic mixed normality of volatility functional estimators, we require
\begin{equation}\label{cond.NM}
\left\{\begin{array}{ll}
N &\to \infty \\
N &\le \lfloor\underline{n}/2\rfloor - M+1,
%N/\underline{n}^{2/3} &\to 0 \\
\end{array}\right.\qquad
\left\{\begin{array}{ll}
M/N^{1/(1+2\alpha)} & \to \infty \\
M/N^{1/2} &\to 0,
\end{array}\right.
\end{equation}
where $\alpha$ is specified in assumption \ref{a.V}.
\end{remk}

\subsubsection{The relation with kernel methods and advantages}
Here we provide some intuition for the spot estimator in the simplest case $d=1$ and compare it with kernel estimators of volatility.

Suppose we observe the univariate process at times $\{\tau_0,\tau_1,\cdots,\tau_n\}$, and let $\delta_j=\delta^1_j$ be the first-order difference operator. Note
\begin{equation*}
	\Fc{}{q} = \sum_{h=1}^n\ee{q}{\tau_{h}}\delta_h(X)^2 + \frac{1}{2N+1}\sum_{|s|\le N}\sum_{h\ne v}\ee{q}{\tau_{h}}\ei{s}{(\tau_h-\tau_v)}\delta_h(X)\delta_v(X)
\end{equation*}
we can write
\begin{equation*}
	\widehat{F}(c)^{n,N}_q = \sum_{h=1}^n\ee{q}{\tau_h}\delta_h(X)^2 + \sum_{h\ne v} \ee{q}{\tau_h} D^N\Big(\frac{\tau_h-\tau_v}{T}\Big) \frac{\delta_h(X)\delta_v(X)}{2N+1}
\end{equation*}
where $D^N(\cdot)$ is a kernel function defined later in (\ref{def.Dirichlet}). Furthermore, based on the definition (\ref{def.chat}), one has
\begin{equation}\label{chat.rep}
	\chat{}{t} = \frac{1}{T}\sum_{h=1}^n F^M\Big(\frac{t-\tau_h}{T}\Big)\delta_h(X)^2 + \frac{1}{T}\sum_{h\ne v} F^M\Big(\frac{t-\tau_h}{T}\Big) D^N\Big(\frac{\tau_h-\tau_v}{T}\Big) \frac{\delta_h(X)\delta_v(X)}{2N+1}
\end{equation}
where $F^M(\cdot)$ is another kernel function defined later in (\ref{def.Fejer}). 

Figure \ref{fig.dirichlet.fejer} shows some examples of the kernels $D^N(\cdot)$ and $F^M(\cdot)$. There are some wiggles away from the origin due to the fact that they are trigonometric polynomials. As $N$ and $M$ become large, the kernels concentrate more around the origin.

\begin{figure}
	\caption{Dirichlet kernels and Fej\'er kernels}
	\includegraphics[width=.5\textwidth]{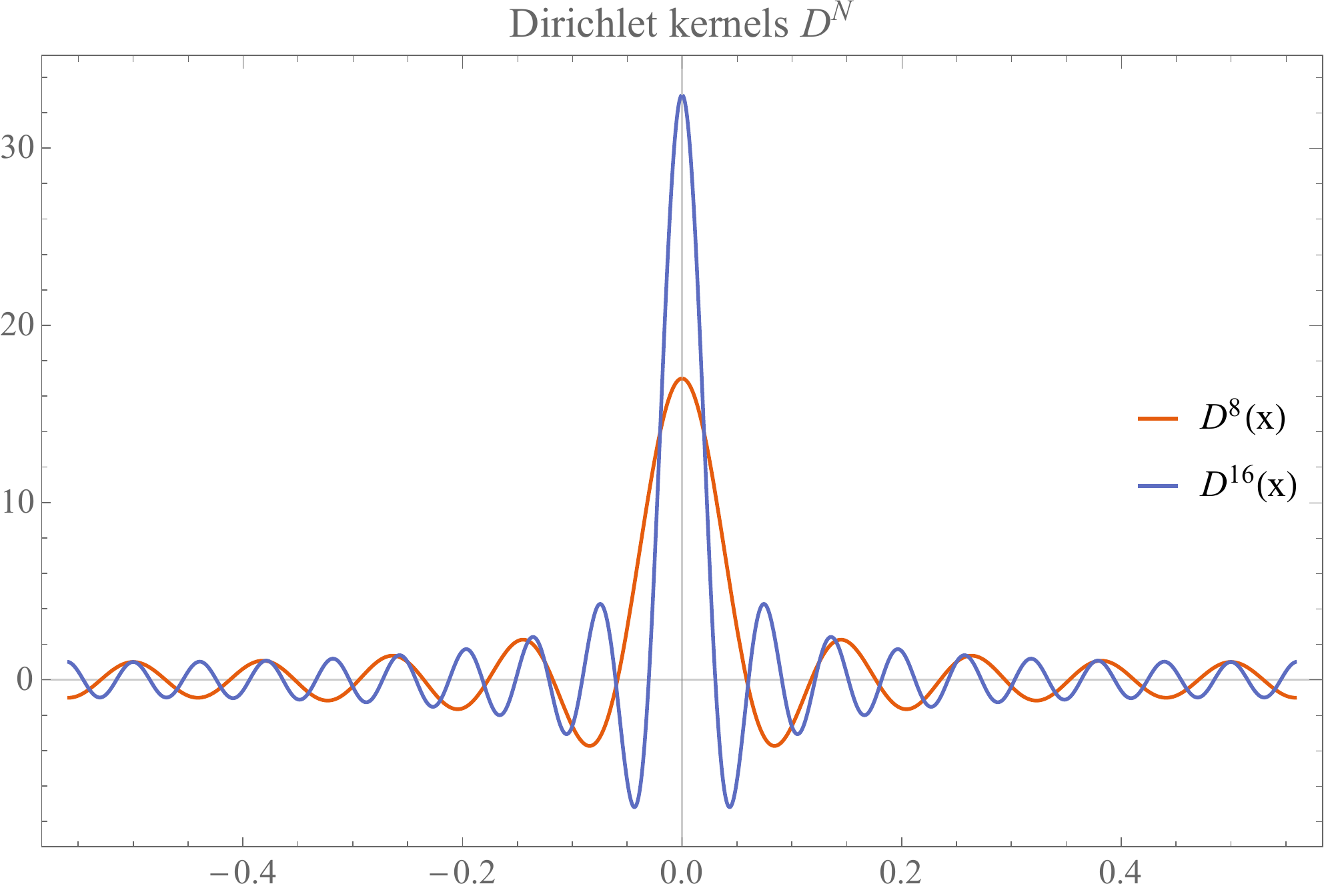}\includegraphics[width=.5\textwidth]{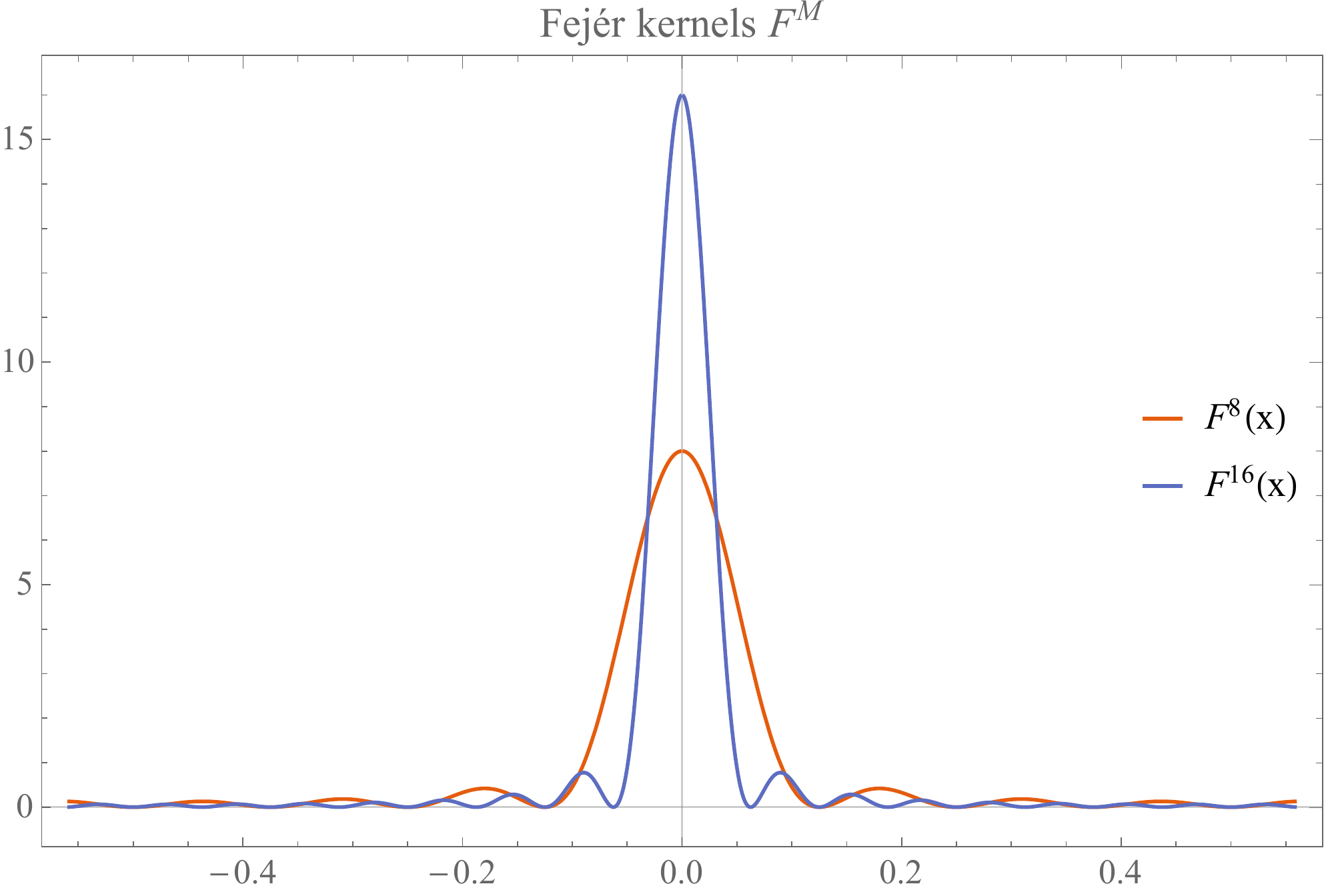}
	\label{fig.dirichlet.fejer}
\end{figure}

If one interprets $\delta_h(X)^2$ as a proxy of $c(\tau_h)$, then $\widehat{F}(c)^{n,N}_q$ as an estimator of the Fourier coefficient is a combination of the DFT of the proxies $\delta_h(X)^2$'s and cross terms involving the sample auto-covariances weighted by the kernel $D^N(\cdot)$:
\begin{equation*}
	\widehat{F}(c)^{n,N}_q = \text{DFT of volatility proxies} + \text{weighted sum of sample auto-covariance}
\end{equation*}
similarly, $\widehat{c}^{n,N,M}(t)$ can be interpreted as a kernel estimator plus cross terms. The kernel is $F^M(\cdot)$ and the cross terms are sample auto-covariances weighted by both $D^N(\cdot)$ and $F^M(\cdot)$:
\begin{equation*}
	\chat{}{t} = \text{a kernel estimator} + \text{weighted sum of sample auto-covariance}
\end{equation*}

The cross term of weighted sum brings additional variation to the estimator (\ref{def.chat}) as opposed to the kernel estimators; see \cite{mmr15}. Naturally we shall ask: given the possible variations from the cross terms, why not just use the DFT of volatility proxies $\delta_j(X)$'s to estimate the Fourier coefficients? Why not just use the kernel estimator to estimate the spot volatility?

Here are 2 significant merits of the spot estimator (\ref{def.chat}) as compared to kernel estimators:
\begin{itemize}
	\item In multivariate settings, (\ref{def.chat}) can estimate the spot co-volatility when different processes are observed asynchronously, because the Bohr convolution is computed in the frequency domain and one does not need to worried about data misalignment and temporal irregularity in the time domain; however many other estimators require data alignment as a prerequisite.
	\item When the sampling frequency is high enough so that microstructure noise $\e$ is required in the model, the estimators (\ref{def.Fhat}) and (\ref{def.chat}) are still consistent with smaller choices of $N$ and $M$; whereas $\delta^j(X+\e)^2$ can no longer be a good proxy for volatility.
\end{itemize}

\subsection{Volatility functionals estimator by Fourier plug-ins}
Based on the plug-in framework for functional estimation, we define the estimator of volatility functionals as
\begin{equation}\label{def.Shat}
	\widehat{S}(g)^{n}_T \equiv \sum_{h=1+L}^{B-L} g\Big(\widehat{c}^{n,N,M}\Big(\frac{hT}{B}\Big)\Big) \frac{T}{B}
\end{equation}
We call (\ref{def.Shat}) as \textit{functional estimator}. We have 4 tuning parameters for the functional estimators, namely $N,M,B,L$. The tuning parameters $N$ and $M$ are inherited from the spot estimator $\widehat{c}^{n,N,M}$, see remark \ref{remk.tuning}. The tuning parameters $B$ and $L$ dictate how to construct the functional estimators:
\begin{itemize}
	\item $B$ is the number of plug-ins in the Riemann sum; a higher $B$ results in a more accurate approximation to the integral, with the cost of higher computational load;
	\item $L$ is the bandwidth at the boundaries of the time window, in which no spot estimate will be taken in the Riemann sum.
\end{itemize}
The boundary values of a volatility sample path $c(0)$ and $c(T)$ are different in general. However, the spot estimator (\ref{def.chat}) is based on trigonometric series and is periodic by construction, hence it holds $\chat{}{0}=\chat{}{T}$. Because of this artifact, no spot estimate near the boundaries will be used in the functional estimator (\ref{def.Shat}).

We require the tuning parameters $B$ and $L$ satisfy
\begin{eqnarray}\label{cond.BL}
\left\{\begin{array}{ll}
	B/N^{1/2} \to \infty & \\
	L = 0 & \text{if } c(0)=c(T)\\
	L \asymp B/M & \text{if } c(0)\ne c(T).
\end{array}\right.
\end{eqnarray}

To summarize, the Fourier method for volatility functionals can be implemented in the following algorithm in pseudo-code:
\begin{algo}
Read in the data vector $\{X_j(\tau^j_h)\}_{h=0,\cdots,n_j},\; j=1\cdots,d$;
\begin{enumerate}
	\item Input the tuning parameters $N$, $M$ satisfying (\ref{cond.NM});
	\item for $j=\{1,\cdots,d\}$:
	\begin{itemize}
		\item Compute $\Fdx{j}{s}$, $s=0,\pm1,\cdots,\pm\lfloor n_j/2\rfloor$ according to (\ref{def.FShat}), by the FFT algorithm;
	\end{itemize}
	\item for $q=\{0,\pm1,\cdots,\pm\lfloor\min_{j}n_j/2\rfloor\}$:
	\begin{itemize}
		\item Compute the complex-valued matrix $\Fc{}{q}$ by (\ref{def.Fc.mat});
	\end{itemize}
	\item Input the tuning parameters $B$, $L$ satisfying (\ref{cond.BL});
	\item Zero-pad\footnote{Note that the required length of spot estimates is $B$ and is usually higher than the number of Fourier coefficients. Zero-padding plus FFT decrease the computational cost from $O(BN)$ of a na\"ive algorithm to $O(B\ln(B))$.} symmetrically each sequence $\{\Fc{jk}{q}\}$ to be of length $B$, and use the FFT algorithm to compute $\chat{jk}{hT/B},\,h=0,\cdots,B-1$;
	\item Plug in $\chat{}{hT/B}$ to compute (\ref{def.Shat}).
\end{enumerate}
\end{algo}

\section{Consistency}\label{sec:consistency}
\subsection{Consistent estimation of volatility spectrum}\label{sec:consis.spec}
In this section, we discuss the convergence of $\Fc{}{q}$ to the true spectrum. Consistency of the spectrum estimation has been shown by \cite{mm09}. Here we will take a closer look at the various components and causes of its estimation error.

First, we introduce a short-hand representation of (\ref{def.X}):
    \[ X = X(0) + A + M, \]
where $A(t) = \int_0^tb(u)\ds u$, $M(t) = \int_0^t\sigma(u)\ds W(u)$.

We can write
\begin{equation}\label{Fhat-F.decomp}
    \Fc{jk}{q} - F(c_{jk})_q = R(0)^{n,N}_{jk,q} + R(1)^{n,N}_{jk,q} + R(2)^{N}_{jk,q},
\end{equation}
where
\begin{eqnarray*}
    R(0)^{n,N}_{jk,q} &=& \frac{1}{2N+1}\sum_{|s|\le N}\big[\Fdx{j}{q-s}\Fdx{k}{s} - \Fdm{j}{q-s}\Fdm{k}{s}\big]\\
    R(1)^{n,N}_{jk,q} &=& \frac{1}{2N+1}\sum_{|s|\le N}\left[\Fdm{j}{q-s}\Fdm{k}{s} - F(\mathrm{d}M_j)_{q-s}F(\mathrm{d}M_k)_s\right]\\
    R(2)^{N}_{jk,q} &=& \frac{1}{2N+1}\sum_{|s|\le N}F(\mathrm{d}M_j)_{q-s}F(\mathrm{d}M_k)_s - F(c_{jk})_q.
\end{eqnarray*}

Essentially, (\ref{Fhat-F.decomp}) decomposes the error in volatility spectrum estimation into 3 effects: 
\begin{itemize}
	\item $R(0)^{n,N}_{jk,q}$ is the effect of the drift term;
	\item $R(1)^{n,N}_{jk,q}$ is the effect of discrete observations of the continuous-time model (discretization \& asynchronicity errors);
	\item $R(2)^{N}_{jk,q}$ is the effect due to finite Bohr convolution (statistical error).
\end{itemize}
Under assumption \ref{a.T}, \ref{a.U}, according to the proof in appendix \ref{apdx:spec}, $\exists K>0$, we have
\begin{equation}\label{errors.Fchat}
\left.\begin{array}{lcl}
	\E\big(|R(0)^{n,N}_{jk,q}|\big) &\le& K TN^{-3/4} \\
	\E\big(|R(1)^{n,N}_{jk,q}|\big) &\le& K N\Delta(n)\\ 
	\E\big(|R(2)^{n,N}_{jk,q}|\big) &\le& K TN^{-1/2}
\end{array}\right\}.
\end{equation}	

\begin{prop}\label{prop.Fc.error}
If assumption \ref{a.T}, \ref{a.U} hold, then $\exists K>0$, such that
\begin{equation*}
	\E\Big(\big|\Fc{jk}{q} - F(c_{jk})_q\big|\Big) \le K\big[N\Delta(n) + TN^{-1/2}\big].
\end{equation*}	
\end{prop}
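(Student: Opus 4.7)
The plan is a clean triangle-inequality assembly of the three pieces already laid out by the author. Substituting the three $L^1$ bounds catalogued in (\ref{errors.Fchat}) into the decomposition (\ref{Fhat-F.decomp}),
\[
\E\bigl|\Fc{jk}{q}-F(c_{jk})_q\bigr|\le \E|R(0)^{n,N}_{jk,q}|+\E|R(1)^{n,N}_{jk,q}|+\E|R(2)^{N}_{jk,q}|\le K\bigl(TN^{-3/4}+N\Delta(n)+TN^{-1/2}\bigr).
\]
For $N\ge 1$ we have $N^{-3/4}\le N^{-1/2}$, so the drift-induced and statistical contributions merge into $2KTN^{-1/2}$ and the stated bound $K'[N\Delta(n)+TN^{-1/2}]$ follows after a cosmetic enlargement of the constant. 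This final step is essentially bookkeeping; the real content lies in establishing the three per-piece bounds of (\ref{errors.Fchat}), which the excerpt defers to appendix \ref{apdx:spec}. I would attack those in order of ascending delicacy.

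For the statistical error $R(2)$, I would first compute the mean of each summand. By It\^o isometry, $\E[F(\mathrm{d}M_j)_{q-s}F(\mathrm{d}M_k)_s]=\int_0^T e^{-i2\pi q u/T}c_{jk}(u)\ds u=F(c_{jk})_q$ independently of $s$, so averaging over $|s|\le N$ retains the same mean and thus $\E[R(2)^N_{jk,q}]=0$. The $L^1$ bound reduces to controlling $\mathrm{Var}(R(2))$, a double sum over $s,s'\in[-N,N]$ of fourth-moment quantities; another application of It\^o isometry together with the near-orthogonality of $\{e^{-i2\pi s u/T}\}_s$ at distinct frequencies suppresses the off-diagonal contributions, leaving $\mathrm{Var}(R(2))=O(T^2/N)$, whence $\E|R(2)|\le KTN^{-1/2}$ by Cauchy--Schwarz.

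For the discretization error $R(1)$, I would telescope each summand as $(\Fdm{j}{q-s}-F(\mathrm{d}M_j)_{q-s})\Fdm{k}{s}+F(\mathrm{d}M_j)_{q-s}(\Fdm{k}{s}-F(\mathrm{d}M_k)_s)$ and apply Cauchy--Schwarz with two key $L^2$ estimates: $\|F(\mathrm{d}M_j)_s\|_{L^2}\vee\|\Fdm{j}{s}\|_{L^2}\le KT^{1/2}$, from It\^o isometry on an integrand of unit modulus, and $\|F(\mathrm{d}M_j)_s-\Fdm{j}{s}\|_{L^2}\le K|s|\Delta(n)T^{-1/2}$, obtained because $e^{-i2\pi s u/T}$ varies by $O(|s|\Delta^j_h/T)$ on each observation interval $I^j_h$ and summing the squared per-interval error via It\^o isometry is $O((|s|\Delta(n)/T)^2\cdot T)$. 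The per-$s$ $L^1$ bound is then $K|s|\Delta(n)$, and averaging with $\frac{1}{2N+1}\sum_{|s|\le N}|s|\asymp N$ gives $\E|R(1)|\le KN\Delta(n)$.

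The drift-induced error $R(0)$ is the hardest piece and the main technical obstacle in the appendix. The summands involve Bohr convolutions of $\Fda{j}{\cdot}$ with $\Fdm{k}{\cdot}$ and with itself; the naive deterministic bound $|\Fda{j}{s}|\le\int_0^T|b_j(u)|\ds u\le mT$ would only yield $O(T)$, whereas the announced rate is $TN^{-3/4}$. The extra gain in $N$ must be squeezed from the oscillation of $e^{-i2\pi s u/T}$ in $s$, which I would handle by summation-by-parts on the outer sum combined with $L^2$ control of the martingale cross terms; the exponent $3/4$, intermediate between the deterministic and statistical rates, suggests a hybrid estimate interpolating deterministic oscillatory control against martingale moment control. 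Fortunately, since $TN^{-3/4}\le TN^{-1/2}$, Proposition \ref{prop.Fc.error} itself does not require any refinement of this term beyond what is already displayed in (\ref{errors.Fchat}).
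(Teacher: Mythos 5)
Your assembly step and your treatments of $R(1)$ and $R(2)$ are essentially the paper's own argument: for $R(1)$ the paper likewise writes $\Fdm{j}{q}-F(\mathrm{d}M_j)_q=\int_0^T\beta^n_{j,q}(t)\ds M_j(t)$ with $|\beta^n_{j,q}(t)|\le KT^{-1}|q|\Delta(n)$, telescopes the product, and applies Burkholder--Davis--Gundy/Cauchy--Schwarz to get the per-frequency bound $K(|q-s|+|s|)\Delta(n)$ and hence $KN\Delta(n)$ after averaging; for $R(2)$ the paper's It\^o-formula decomposition $\Gamma^j_{q-s}(T)\Gamma^k_s(T)=F(c_{jk})_q+\int\Gamma^j\ds\Gamma^k+\int\Gamma^k\ds\Gamma^j$ is exactly your ``mean equals $F(c_{jk})_q$, fluctuation is a double stochastic integral'' framing, and your ``near-orthogonality'' is made precise there by the identity $\frac{1}{2N+1}\sum_{|s|\le N}e^{i2\pi s(u-t)/T}=\frac{1}{2N+1}D^N\big(\frac{u-t}{T}\big)$ together with $F^{2N+1}=\frac{1}{2N+1}(D^N)^2$ and (\ref{Fejer.integral}), yielding $\E|\Lambda|^2\asymp T^2/N$. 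Those two pieces are fine.

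The genuine gap is $R(0)$. You correctly observe that the naive bound $|\Fda{j}{s}|\le\int_0^T|b_j|\ds t=O(T)$ is insufficient and that a gain in $N$ is required, but you then leave this as a speculative plan (summation by parts, a ``hybrid interpolation'' to explain the exponent $3/4$) rather than an argument — so as written the proof of the proposition is not closed, since without \emph{some} $N$-decaying bound on $R(0)$ the right-hand side cannot be reduced to $K[N\Delta(n)+TN^{-1/2}]$. Moreover, the mechanism the paper actually uses is simpler than what you envision and you should be able to supply it: apply Cauchy--Schwarz over the convolution index $s$ to each of the three cross terms $\Fda{j}{q-s}\Fda{k}{s}$, $\Fda{j}{q-s}\Fdm{k}{s}$, $\Fda{k}{s}\Fdm{j}{q-s}$, and control the drift factor by \emph{Parseval's identity}, $\sum_{s}|F(\mathrm{d}A_j)_s|^2=\int_0^T|b_j(t)|^2\ds t<\infty$ (plus the same $\beta^n_{j,s}$ discretization correction as in $R(1)$), so that $\frac{1}{2N+1}\sum_{|s|\le N}|\Fda{j}{q-s}|^2=O(N^{-1})$. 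Combined with the trivial $L^2$ bound $\E|\Fdm{k}{s}|^2\le KT$ this already gives $\E|R(0)|\le KT^{3/2}N^{-1/2}$, which suffices for the proposition; the sharper $N^{-3/4}$ in (\ref{errors.Fchat}) comes from the additional Dirichlet-kernel estimate $\frac{1}{2N+1}\sum_{|s|\le N}F(\mathrm{d}M_k)_s^2=O(TN^{-1/2})$, but, as you note, that refinement is not needed here.
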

By (\ref{cond.NM}) and \textit{Markov's inequality}, we have the following corollary.
\begin{corol}\label{corol.Fhat}
    If $N$ satisfies (\ref{cond.NM}), under assumption \ref{a.T}, \ref{a.U},
    \[ \Fc{jk}{q} \overset{\mathbb{P}}{\longrightarrow} F(c_{jk})_q, \qquad q=0,\pm1,\cdots,\pm(\lfloor n_j/2\rfloor-N). \]
\end{corol}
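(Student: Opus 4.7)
The corollary is a one-line consequence of Proposition \ref{prop.Fc.error} combined with \emph{Markov's inequality}. First I would invoke the proposition to obtain the moment bound $\E|\Fc{jk}{q} - F(c_{jk})_q|\le K[N\Delta(n)+TN^{-1/2}]$ for some constant $K>0$. Then Markov's inequality yields, for every $\epsilon>0$,
\[\mathbb{P}\big(|\Fc{jk}{q} - F(c_{jk})_q|>\epsilon\big) \le \frac{K}{\epsilon}\big[N\Delta(n)+TN^{-1/2}\big],\]
so the proof reduces to showing that the right-hand side vanishes under (\ref{cond.NM}).

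The statistical error $TN^{-1/2}$ tends to zero immediately from the clause $N\to\infty$ in (\ref{cond.NM}). For the discretization term $N\Delta(n)$, assumption \ref{a.T} together with the identity $\sum_h \Delta^j_h = T$ forces all spacings across dimensions to be of the same order, so $\Delta(n)\asymp 1/\underline{n}$ and consequently $N\Delta(n)\asymp N/\underline{n}$. The constraint $N\le\lfloor\underline{n}/2\rfloor - M+1$ together with $M\to\infty$ (which follows from $M\gg N^{1/(1+2\alpha)}$ and $N\to\infty$) places $N$ in the admissible tuning window, within which $N/\underline{n}\to 0$ and hence $N\Delta(n)\to 0$.

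The argument requires essentially no new machinery beyond Proposition \ref{prop.Fc.error}. The only point that deserves any thought is reading the vanishing of $N\Delta(n)$ off the tuning constraints in (\ref{cond.NM}); once that is acknowledged, the entire proof is a single application of Markov's inequality and does not need to re-enter the three-term decomposition (\ref{Fhat-F.decomp}) used to prove the proposition itself.
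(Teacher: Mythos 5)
Your overall route is exactly the paper's: the corollary appears immediately after Proposition \ref{prop.Fc.error} with the one-line justification ``by (\ref{cond.NM}) and Markov's inequality,'' and your reduction of the problem to showing $N\Delta(n)+TN^{-1/2}\to 0$ is the correct reading of that line. The statistical term is handled correctly ($TN^{-1/2}\to 0$ from $N\to\infty$), and your observation that assumption \ref{a.T} forces $\Delta(n)\asymp 1/\underline{n}$ is also fine.

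The gap is in your treatment of the discretization term. You claim that $N\le\lfloor\underline{n}/2\rfloor-M+1$ together with $M\to\infty$ ``places $N$ in the admissible tuning window, within which $N/\underline{n}\to 0$.'' That implication is false: an \emph{upper} bound of the form $N\le\lfloor\underline{n}/2\rfloor-M+1$ with $M=o(N^{1/2})$ is perfectly compatible with $N=\lfloor\underline{n}/2\rfloor-M+1$, a choice satisfying every clause of (\ref{cond.NM}) yet giving $N/\underline{n}\to 1/2$, so that $N\Delta(n)\asymp N/\underline{n}$ stays bounded away from zero and your Markov bound does not vanish. The paper itself flags exactly this regime in Remark \ref{remk.N}: at the maximal admissible $N$ the spectrum estimator is \emph{biased} under asynchronous sampling, i.e.\ the corollary's conclusion genuinely fails there, so the missing hypothesis cannot be conjured out of (\ref{cond.NM}). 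The honest version of the argument needs the additional condition $N\Delta(n)\to 0$ (equivalently $N=o(\underline{n})$ under assumption \ref{a.T}, for which $N=o(\underline{n}^{2/3})$ from Remark \ref{remk.N} is amply sufficient); this is implicit in the paper's terse justification but is not derivable from the displayed constraints, and your attempt to derive it is the step that breaks.
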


\begin{remk}\label{remk.Fc.error}
For volatility spectrum estimation on a finite time horizon, we summarize the magnitudes (\ref{errors.Fchat}) of various error terms in table \ref{table.Fchat.error}, where $K$ is some finite positive real number.
\begin{table}[!ht]
\caption{approximate magnitude: estimation errors of volatility spectrum}
\begin{tabular}{l|c|c|c}
	              &                  & discretization \&      & \\
	error sources & drift effect     & asynchronicity errors & statistical error\\
	\hline 
	magnitudes    & $\le KN^{-3/4}$  & $\le KN\Delta(n)$      & $\asymp N^{-1/2}$ 
\end{tabular}
\label{table.Fchat.error}
\end{table}

\begin{itemize}
	\item The discretization effect summarizes how irregular and asynchronous observations bear on the spectrum estimator. As it turns out, as long as $N\Delta(n)\to0$, the effect of temporal irregularity and asynchronicity is asymptotically negligible; this is consistent with the finding of \cite{cg11};
	\item The size of the drift effect is dominated by other terms regardless of the choice $N$. In subsequent asymptotic analysis of the volatility spectrum estimator, we can safely assume, without loss of generality,
	\begin{equation}\label{def.X.mtg}
		X(t) = X(0) + \int_0^t\sigma(u)\ds W(u).
	\end{equation}
\end{itemize}
\end{remk}

\begin{remk}\label{remk.N}
Table \ref{table.Fchat.error} indicates that the size of $N$ determines the convergence rate of the spectrum estimator $\Fc{jk}{q}$:
\begin{itemize}
	\item $N=o(\Delta(n)^{-2/3})=o(\underline{n}^{2/3})$ is a sufficient (not necessary) condition under which the asynchronicity effect is (asymptotically) negligible compared with the statistical error; in this scenario, the rate of convergence is $N^{1/2}$ and is dictated by the statistical error of the finite Bohr convolution;
	\item 
	%if $N=O(\Delta(n)^{-2/3})=O(\underline{n}^{2/3})$ or larger, the effect of discrete and asynchronous observations has to be taken into account. For example, 
	if one take all the available information in the frequency domain by letting $N=\lfloor\underline{n}/2\rfloor-M+1$, the spectrum estimator is biased due to asynchronicity, although the estimator converges with a bias with the rate $\underline{n}^{1/2}$.
\end{itemize}
To avoid the asynchronicity bias, the convergence rate is less that $\underline{n}^{1/2}$. We call this phenomenon \textit{the curse of asynchronicity}. For volatility functionals using the Fourier transform method, we provide a sufficient and necessary conditions for both consistency in section \ref{sec:consis.spot} and unbiased asymptotic normality in section {\ref{sec:uni}}.
\end{remk}
\subsection{Consistent  estimation of spot volatility and its functionals}\label{sec:consis.spot}
In this section, we first state a result on the mean square rate of the spot volatility estimation, then based on this mean square rate, we can guarantee the consistency both the spot estimator (\ref{def.chat}) and the functional estimator (\ref{def.Shat}).

\cite{mr15} proved the asymptotic normality and convergence rate for univariate spot volatility. The next proposition extends their result on the mean square rate to the multivariate and asynchronous settings.
\begin{prop}\label{prop.msr}
Under (\ref{def.X}) and assumption \ref{a.T}, \ref{a.U}, \ref{a.V}, there exists a finite positive constant $K$ such that $\forall j,k=1,\cdots,d$,
\begin{equation*}
	\sup_{t\in[M^{-1},T-M^{-1}]} \E\big|\chat{jk}{t} - c_{jk}(t)\big|^2 \le K \Big(\frac{N^4}{\underline{n}^4}\mathds{1}_{\{j\ne k\}} + M^{-2\alpha} + \frac{M}{N}\Big);
\end{equation*}
additionally, if $c(0)=c(T)$,
\begin{equation*}
	\sup_{t\in[0,T]} \E\big|\chat{jk}{t} - c_{jk}(t)\big|^2 \le K \Big(\frac{N^4}{\underline{n}^4}\mathds{1}_{\{j\ne k\}} + M^{-2\alpha} + \frac{M}{N}\Big).
\end{equation*}
\end{prop}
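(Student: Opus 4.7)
The plan is a bias--variance split around the Fej\'er partial sum of the \emph{true} volatility, combined with the spectrum-level error analysis of Section \ref{sec:consis.spec}. Set $w_q = 1 - |q|/M$ and $\widetilde{c}^M_{jk}(t) \equiv \frac{1}{T}\sum_{|q|<M} w_q\, F(c_{jk})_q\, \ei{q}{t}$, so that
\[
\chat{jk}{t} - c_{jk}(t) = \big(\chat{jk}{t} - \widetilde{c}^M_{jk}(t)\big) + \big(\widetilde{c}^M_{jk}(t) - c_{jk}(t)\big),
\]
and it suffices to bound each piece in $L^2$. The deterministic piece is handled by (\ref{approximation}) specialized to $f = c_{jk}$: under Assumption \ref{a.V}, $\omega_{c_{jk}}(1/M)\le K M^{-\alpha}$, contributing $M^{-2\alpha}$ to the mean square. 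The bound is uniform on $[M^{-1},T-M^{-1}]$ for general $c$; when $c(0)=c(T)$, periodic extension of $c_{jk}$ removes the boundary restriction and yields the second display of the proposition.

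Substituting the spectrum decomposition (\ref{Fhat-F.decomp}) into the stochastic piece gives
\[
\chat{jk}{t} - \widetilde{c}^M_{jk}(t) = \frac{1}{T}\sum_{|q|<M} w_q\, \big(R(0)^{n,N}_{jk,q} + R(1)^{n,N}_{jk,q} + R(2)^{N}_{jk,q}\big)\ei{q}{t},
\]
and it remains to bound the $L^2$ contribution of each component. Using the It\^o isometry $\E\big[F(\mathrm{d}M_j)_a F(\mathrm{d}M_k)_b\big] = F(c_{jk})_{a+b}$ and a fourth-moment (Wick) computation on the Gaussian-martingale part, one verifies that $\E R(2)^{N}_{jk,q} = 0$, that $\mathrm{Var}(R(2)^{N}_{jk,q}) = O(1/N)$, and that $(R(2)^{N}_{jk,q})_q$ are approximately orthogonal across $q$. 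Fej\'er-weighting $O(M)$ such terms then contributes mean-square $O(M/N)$, which is the variance summand in the statement. The drift term $R(0)$ is absorbed because its magnitude $N^{-3/4}$ from Proposition \ref{prop.Fc.error} is dominated by the other terms.

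The $R(1)^{n,N}_{jk,q}$ contribution is the main obstacle. The naive $L^1$ bound $K N\Delta(n)$ from Proposition \ref{prop.Fc.error} would yield $(MN\Delta(n))^2$ after a triangle inequality, much larger than the claimed $N^4/\underline{n}^4$. The sharper rate must come from viewing $\sum_q w_q\, \E R(1)^{n,N}_{jk,q}\, \ei{q}{t}$ as the Fej\'er partial sum of a bias \emph{function} in $t$, which is controlled by its sup-norm rather than termwise. When $j=k$, single-grid martingale orthogonality makes the leading bias vanish and the residual contribution is subsumed into $M/N$. When $j\ne k$, the misalignment of $\mathcal{T}_j$ and $\mathcal{T}_k$ produces a non-vanishing bias of sup-norm order $(N\Delta(n))^2\asymp (N/\underline{n})^2$: two factors of $N/\underline{n}$ arise because the discrete Fourier--Stieltjes transforms $\Fdm{j}{s}$ and $\Fdm{k}{s}$ each carry a per-frequency discretization error of this size, and the cross product inside the Bohr convolution multiplies them. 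Squaring this sup-norm bound yields the $N^4/\underline{n}^4\,\1{j\ne k}$ summand. The argument relies on the independence of the grids and $X$ from Assumption \ref{a.T} (so that one can condition on $\mathcal{G}$ and apply It\^o isometry fibrewise), the mesh-ratio bound $\Delta(n)\asymp 1/\underline{n}$, and the uniform boundedness of the Fej\'er kernel as an approximate identity. Combining the three bounds yields both displays of the proposition.
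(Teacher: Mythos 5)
Your overall architecture is the right one --- a Fej\'er approximation bias of order $M^{-\alpha}$ handled via (\ref{approximation}), a martingale term whose mean square is $O(M/N)$ once the It\^o isometry is applied to the Fej\'er-summed object rather than termwise, a drift contribution that is dominated, and an asynchronicity bias whose square gives $N^4/\underline{n}^4$ --- and your treatment of $R(2)$, of the drift, and of the boundary restriction is essentially sound. The genuine gap is in the justification of the $N^4/\underline{n}^4$ term. Expanding the summand of $R(1)^{n,N}_{jk,q}$, one has
\[
\Fdm{j}{q-s}\Fdm{k}{s}-F(\mathrm{d}M_j)_{q-s}F(\mathrm{d}M_k)_s
=\big(\Fdm{j}{q-s}-F(\mathrm{d}M_j)_{q-s}\big)F(\mathrm{d}M_k)_s
+F(\mathrm{d}M_j)_{q-s}\big(\Fdm{k}{s}-F(\mathrm{d}M_k)_s\big)
+(\text{error})\times(\text{error}),
\]
and your mechanism (``each transform carries a per-frequency error of size $N/\underline{n}$ and the cross product multiplies them'') accounts only for the last piece. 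The two cross terms are the dominant ones a priori: conditionally on $\mathcal{G}$, each has expectation of order $|q-s|\Delta(n)$ or $|s|\Delta(n)$, i.e.\ \emph{first} order $N\Delta(n)$ per frequency pair. If they survived at that order, the bias function would have sup-norm $N/\underline{n}$ and its square would be $N^2/\underline{n}^2$, which is far larger than the claimed $N^4/\underline{n}^4$. What actually saves the estimate is that the Bohr average over $|s|\le N$ of the phase mismatch $\ei{s}{(\thb{j}{t}-\thb{k}{t})}$ produces the normalized Dirichlet kernel $\dd{jk}{t,t}$, and $D^N$ is \emph{even}: by the Taylor expansion in (\ref{d-1}), $\dd{jk}{t,t}-1=O\big(N^2[\thb{j}{t}-\thb{k}{t}]^2\big)=O(N^2\Delta(n)^2)$, the linear term in $N\Delta(n)$ cancelling identically (while the residual phase shift $\thb{j}{t}$ versus $t$ collapses to $O(M/\underline{n})$ after the Fej\'er sum, via lemma \ref{lem.Fejer.Riemann}). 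This cancellation is the reason the asynchronicity bias is second order; your proposal never invokes it, so as written the $R(1)$ bound does not go through.

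For comparison, the paper avoids tracking frequency-wise biases altogether: it applies It\^o's formula directly to the product $\Fdx{j}{q-s}\times\Fdx{k}{s}$, obtaining the exact decomposition (\ref{rep.Fc}) into a signal term $\Phi^{n,N}_{q,jk}=\int_0^T\ee{q}{\thb{j}{t}}\,\dd{jk}{t,t}\,c_{jk}(t)\ds t$ plus two genuine mean-zero double stochastic integrals. Fej\'er-summing then yields (\ref{decomp.chat.error}): the stochastic integrals $Q(t,0),Q(t,1)$ give $M/N$ via lemma \ref{lem.U2.Z2}, lemma \ref{lem.dp} and (\ref{Jackson}), while the signal term splits as in (\ref{decomp.Omega}) into the asynchronicity piece $\Omega(t,0)$ (where the Dirichlet-kernel expansion delivers $N^2\Delta(n)^2$), the grid-versus-continuum piece $\Omega(t,1)$ of order $M/\underline{n}$, and the Fej\'er approximation piece $\Omega(t,2)$ of order $M^{-\alpha}$. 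If you want to salvage your frequency-domain route, you must perform the Bohr average on the cross terms before taking absolute values, which reproduces exactly this Dirichlet-kernel cancellation.
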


\begin{remk}
The various terms in the upper bound in proposition \ref{prop.msr} arise from estimation errors of different natures, cf. (\ref{decomp.chat.error}) and (\ref{decomp.Omega}). The sources of these estimation errors are:
\begin{itemize}
\item asynchronous observations;
\item approximation by convolution with the Fej\'er kernel (one type of delta sequences in Fourier analysis);
\item statistical error in the form of stochastic integrals of the Fej\'er kernel with respect to Brownian motion.
\end{itemize}
The magnitude of these estimation errors are summarized in table \ref{table.chat.error}.
\begin{table}[!ht]
\caption{approximate magnitude: estimation errors of spot volatility}
\begin{tabular}{l|c|c|c}
	error sources & asynchronicity error & delta sequence approximation & statistical error\\
	\hline 
	magnitudes    & $\asymp N^2\Delta(n)^2$ & $\asymp M^{-\alpha}$ & $\asymp \sqrt{M/N}$ 
\end{tabular}\label{table.chat.error}
\end{table}
\end{remk}

According to proposition \ref{prop.msr}, we have the following corollary on the uniform consistency of the spot estimator. This corollary generalizes Theorem 3.4 in \cite{mm09} to the case where $c(0)\ne c(T)$. In the spirit of Theorem 2 of \cite{phl16},  it provides a more accurate result on the ``\textit{border effect}'' as a result of $c(0)\ne c(T)$.
\begin{corol}\label{corol.consis.spot}
If (\ref{def.X}) and assumption \ref{a.T}, \ref{a.U} are true, $N$ and $M$ satisfy (\ref{cond.NM}), then %$\exists$ a positive sequence $\delta_n \searrow 0$ such that
	\[ \sup_{t\in[M^{-1},T-M^{-1}]} \big\|\chat{}{t} - c(t)\big\| \overset{\mathbb{P}}{\longrightarrow} 0; \]
additionally, if $c(0)=c(T)$,
	\[ \sup_{t\in[0,T]} \big\|\chat{}{t} - c(t)\big\| \overset{\mathbb{P}}{\longrightarrow} 0. \]
\end{corol}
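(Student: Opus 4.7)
The plan is to lift the uniform-in-$t$ pointwise mean-square bound of Proposition \ref{prop.msr} to uniform convergence in probability by exploiting the trigonometric-polynomial structure of $\chat{}{t}$. Concretely, I split
\[\sup_{t \in I_M} \|\chat{}{t} - c(t)\| \le \sup_{t \in I_M} \|\chat{}{t} - c^M(t)\| + \sup_{t \in I_M} \|c^M(t) - c(t)\|,\]
where $c^M$ is the componentwise Fej\'er approximation (\ref{def.fhat}) of the sample path $c$ and $I_M = [0,T]$ in the periodic case $c(0)=c(T)$, or $[M^{-1}, T-M^{-1}]$ otherwise. This separates a stochastic error, controlled by a discretization--Bernstein argument, from a deterministic Fej\'er approximation error.

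For the stochastic term, $\chat{jk}{t} - c^M_{jk}(t)$ is a trigonometric polynomial in $t$ of degree at most $M-1$, since both summands have Fourier support in $\{-(M-1),\dots,M-1\}$. Bernstein's inequality for trigonometric polynomials therefore gives $\sup_{t \in [0,T]}|\chat{jk}{t} - c^M_{jk}(t)| \le 2\max_i|\chat{jk}{t_i} - c^M_{jk}(t_i)|$ on a fixed equispaced grid $\{t_i\}_{i=1}^{B_M} \subset [0,T]$ with $B_M = \lceil 8\pi M\rceil$ nodes. A union bound with Markov's inequality, combined with $|\chat{jk}{t} - c^M_{jk}(t)|^2 \le 2|\chat{jk}{t} - c_{jk}(t)|^2 + 2|c^M_{jk}(t) - c_{jk}(t)|^2$ and Proposition \ref{prop.msr}, yields
\[P\Bigl(\max_i |\chat{jk}{t_i} - c^M_{jk}(t_i)| > \varepsilon\Bigr) \lesssim \varepsilon^{-2} B_M \bigl(N^4/\underline{n}^4 + M^{-2\alpha} + M/N\bigr),\]
which vanishes under (\ref{cond.NM}), using $M/N^{1/2} \to 0$ to kill the $M^2/N$ contribution and the implicit $N=o(\underline{n}^{4/5})$ to kill the asynchronicity contribution $MN^4/\underline{n}^4$; the $M^{1-2\alpha}$ piece corresponds to the deterministic Fej\'er bias and is absorbed into the second term below rather than being controlled via the grid.

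The deterministic term $\sup_{t\in I_M}\|c^M(t) - c(t)\|$ is handled by classical Fej\'er summability theory. In the periodic case $c(0) = c(T)$, (\ref{approximation}) gives $K\omega_c(1/M)$ uniformly on $[0,T]$, which vanishes almost surely by the continuity of $c$ on the compact $[0,T]$ under assumption \ref{a.U}. In the non-periodic case, the periodic extension of $c$ has a jump of size $|c(T)-c(0)|$ at integer multiples of $T$, producing a Gibbs-type boundary contribution in $c^M$; the restriction to $[M^{-1}, T-M^{-1}]$ pushes $t$ far enough from the jump that the Fej\'er-kernel tail decay $K_M(u) = O((Mu^2)^{-1})$ controls the contribution. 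This is the step I expect to require the most care: the naive tail estimate gives only $O((Mt)^{-1})$ which is of order one at $t = M^{-1}$, so a sharper argument---for example decomposing $c$ into a continuous-periodic piece (for which (\ref{approximation}) applies) plus an explicit affine correction $(c(T)-c(0))t/T$ whose Fej\'er sum can be computed in closed form and analysed pointwise on the shrinking interior---is needed to extract uniform convergence. The remaining steps (Bernstein reduction, union/Markov bound, and invocation of Proposition \ref{prop.msr}) are routine given the tools already established.
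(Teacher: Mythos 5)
Your route is genuinely different from, and more rigorous than, the paper's. The paper offers no argument beyond the sentence ``according to proposition \ref{prop.msr}''; but proposition \ref{prop.msr} bounds $\sup_t\E|\chat{jk}{t}-c_{jk}(t)|^2$ with the supremum \emph{outside} the expectation, and passing from a uniform-in-$t$ pointwise mean-square bound to $\sup_t|\cdot|\overset{\mathbb{P}}{\to}0$ is exactly the step your Bernstein--grid argument supplies. That part of your plan is sound: $\chat{jk}{\cdot}-c^M_{jk}(\cdot)$ has Fourier support in $\{-(M-1),\dots,M-1\}$, so $O(M)$ equispaced nodes control the sup norm, and the union bound costs a factor $M$ that $M^2/N\to0$ absorbs. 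One simplification: in the paper's proof of proposition \ref{prop.msr} (display (\ref{decomp.chat.error})--(\ref{decomp.Omega})) the bias terms $\Omega(t,0),\Omega(t,1),\Omega(t,2)$ are already bounded with the supremum \emph{inside} the expectation, so only the two martingale terms $Q(t,0),Q(t,1)$, with pointwise mean square $\le KM/N$, need to be run through your grid. That removes your need to invoke $N=o(\underline n^{4/5})$ to kill $MN^4/\underline n^4$ --- a condition which, as you half-acknowledge, is \emph{not} part of (\ref{cond.NM}). Even so, the residual asynchronicity bias $\sup_t|\Omega(t,0)|\asymp N^2\Delta(n)^2$ for $j\ne k$ vanishes only if $N=o(\underline n)$, which (\ref{cond.NM}) does not force either; this is a defect of the corollary's hypotheses, shared by the paper, not introduced by your argument.

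Your worry about the non-periodic boundary term is also well founded, and it is the one place where your plan cannot be completed as the statement is literally written. The paper hides this step in lemma \ref{lem.Fejer.uc}(2), whose proof is only the phrase ``by a similar argument''; the tail bound $F^M(x)\le M/(1+M^2x^2)$ used there is valid only for $|x|\le1/2$ and misses the periodic wrap-around of the Fej\'er kernel near $u\approx T$ when $t\approx1/M$. Carrying out your proposed decomposition (continuous periodic piece plus the affine correction $[c(T)-c(0)]t/T$) makes this explicit: for the sawtooth one computes $\widehat f^M(1/M)-f(1/M)=\int_{1/M}^{1/2}F^M(u)\ds u\to\int_1^\infty\sin^2(\pi v)/(\pi^2v^2)\ds v>0$, an $O(|c(T)-c(0)|)$ error that does not vanish at $t=1/M$. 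So uniform convergence holds on $[c_M/M,\,T-c_M/M]$ for any $c_M\to\infty$, but not on $[M^{-1},T-M^{-1}]$ itself. In short: your stochastic argument is correct and fills a real gap in the paper, and your boundary analysis, pushed to its conclusion, shows the first display of the corollary needs a slightly wider excluded band.
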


Therefore, we have the consistency of the functional estimator.
\begin{corol}\label{corol.consis.func}
Assume (\ref{def.X}) and assumption \ref{a.T}, \ref{a.U}, (\ref{cond.g}), $N$ and $M$ satisfy (\ref{cond.NM}), $B$ and $L$ satisfy (\ref{cond.BL}), then
	\[ \widehat{S}(g)^n_T \overset{\mathbb{P}}{\longrightarrow} S(g)_T. \]
\end{corol}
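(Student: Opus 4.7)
The strategy is the triangle-inequality decomposition
\[ \widehat{S}(g)^n_T - S(g)_T = \underbrace{\sum_{h=L+1}^{B-L}\bigl[g(\chat{}{hT/B}) - g(c(hT/B))\bigr]\frac{T}{B}}_{(\mathrm{I})} \;+\; \underbrace{\Bigl(\sum_{h=L+1}^{B-L} g(c(hT/B))\frac{T}{B} - \int_0^T g(c(t))\ds t\Bigr)}_{(\mathrm{II})}, \]
which separates the spot-estimation error from the Riemann-sum approximation error. I would show each summand vanishes in probability. By a standard localization argument based on the sequence $(\tau_m)$ from Assumption \ref{a.U}, it suffices to work on the event $\{\tau_m>T\}$, on which the volatility path takes values in the compact set $\s_m\subset\s$; sending $m\to\infty$ at the end handles the full space.

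For term $(\mathrm{I})$, Corollary \ref{corol.consis.spot} gives $\sup_{t\in[M^{-1},T-M^{-1}]}\|\chat{}{t}-c(t)\|\overset{\mathbb{P}}{\longrightarrow}0$ (and uniformly on $[0,T]$ when $c(0)=c(T)$, in which case $L=0$). The prescription $L\asymp B/M$ in (\ref{cond.BL}) is precisely what is needed so that every grid point $hT/B$ with $L+1\le h\le B-L$ lies inside $[M^{-1},T-M^{-1}]$ for $n$ large enough. Once consistency puts each $\chat{}{hT/B}$ into the $\e$-enlargement $\s_m^\e\subset\s$ with high probability, a first-order Taylor expansion together with the boundedness of $\partial g$ on the compact set $\s$ yields
\[ \bigl|g(\chat{}{hT/B})-g(c(hT/B))\bigr|\le K\,\|\chat{}{hT/B}-c(hT/B)\| \]
uniformly in $h$. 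Multiplying by $T/B$ and summing over at most $B$ terms gives $|(\mathrm{I})|\le KT\sup_{t}\|\chat{}{t}-c(t)\|\longrightarrow 0$ in probability.

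For term $(\mathrm{II})$, the sample path $t\mapsto g(c(t))$ is almost surely continuous on $[0,T]$, hence uniformly continuous. Since the mesh $T/B\to 0$ by (\ref{cond.BL}), the Riemann sum converges almost surely to $\int_{LT/B}^{(B-L)T/B} g(c(t))\ds t$ by the definition of the Riemann integral. The same condition forces $LT/B=O(1/M)\to 0$, and since $g\circ c$ is bounded on the localized event, the omitted boundary pieces $\int_0^{LT/B}$ and $\int_{(B-L)T/B}^T$ of the integrand also vanish.

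The main technical obstacle is the boundary bookkeeping and the matching of the tuning-parameter conditions: one must verify that (\ref{cond.BL}) is compatible with the consistency window $[M^{-1},T-M^{-1}]$ of Corollary \ref{corol.consis.spot}, i.e., that the constants implicit in $L\asymp B/M$ can be chosen so that $LT/B\ge M^{-1}$ for $n$ large enough. Once this compatibility and the localization are in place, the rest of the argument is a routine combination of uniform continuity of $g$ on the compact set $\s$ and convergence of Riemann sums.
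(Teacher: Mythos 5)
Your argument is correct and is precisely the route the paper intends: the corollary is stated as an immediate consequence of Corollary \ref{corol.consis.spot}, condition (\ref{cond.g}) (which gives Lipschitz continuity of $g$ on the compact set $\s$ after localization), and the tuning conditions (\ref{cond.BL}), and your decomposition into the plug-in error and the Riemann-sum/boundary error, with the check that $L\asymp B/M$ keeps the retained grid points inside the consistency window $[M^{-1},T-M^{-1}]$, is exactly the bookkeeping the paper leaves implicit.
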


\section{Stable Convergence and Asymptotic Normality}\label{sec:func}
In this section, we provide the asymptotic distributions of the functional estimator (\ref{def.Shat}) based on Fourier series. We present limit theorems from the relative simple to the complicated: the univariate setting, the bivariate setting, the multivariate setting with synchronous and asynchronous observations, with $N=o(n)$ and $N\asymp n$.

\subsection{Functionals of univariate volatility}\label{sec:uni}
First of all, let's consider a simple case - estimating functionals of one element in the volatility matrix. The object to estimate is
\begin{equation*}
	S(g)_{jk,T} = \int_0^\T g\big(c_{jk}(t)\big)\ds t,
\end{equation*}
where $j,k=1,\cdots,d$. The estimator is
\begin{equation*}
	\widehat{S}(g)^n_{jk,T} \equiv \sum_{h=1+L}^{B-L} g\Big(\widehat{c}^{n,N,M}_{jk}\Big(\frac{hT}{B}\Big)\Big) \frac{T}{B}.
\end{equation*}

We first present the result for diagonal elements, and in this case the temporal spacing is easier to deal with since the only issue is irregularity in the univariate setting. Then we present the result for off-diagonal elements.

There is no particular reason to favor a regular time grid like $\{hT/B\}_h$ into which to plug spot estimates. For the diagonal elements, one could also use the irregular observation times as the time grid on which to compute the spot estimates:
\begin{equation*}
	\widetilde{S}(g)^{n}_{jj,T} \equiv \sum_{h=1+L}^{n_j-L} g\big(\chat{jj}{\tau_h}\big)\Delta^j_h.
\end{equation*}

The univariate functional estimators $\widetilde{S}(g)^{n}_{jj,T}$ and $\widehat{S}(g)^{n}_{jj,T}$ share the same asymptotic distribution with the same convergence rate.
\begin{thm}\label{thm.uni}
Assume (\ref{def.X}), (\ref{cond.g}), assumption \ref{a.T}, \ref{a.U}, and assumption \ref{a.V} with $\alpha>1/2$, $c(0)=c(T)$. For $j=1,\cdots,d$, if we choose $N=\lfloor n_j/2\rfloor-M+1$ and the other tunning parameters in accordance with (\ref{cond.NM}), (\ref{cond.BL}), then
\begin{eqnarray*}
	n_j^{1/2}\big[\widetilde{S}(g)^{n}_{jj,T} -S(g)_{jj,T}\big] &\overset{\mathcal{L}-s}{\longrightarrow}& \mathcal{MN}\big(0,V(g)_{jj,T}\big)\\
	n_j^{1/2}\big[\widehat{S}(g)^{n}_{jj,T} -S(g)_{jj,T}\big]   &\overset{\mathcal{L}-s}{\longrightarrow}& \mathcal{MN}\big(0,V(g)_{jj,T}\big),
\end{eqnarray*}
where
\begin{equation*}
	%V(g)_T = 2\int_0^T \big[\partial g(c(t))\,c(t)\big]^2\,\dot{H}(t)\ds t.
	V(g)_{jj,T} = T\int_0^T \big[\partial g(c_{jj}(t))\,c_{jj}(t)\big]^2\ds t.
\end{equation*}
\end{thm}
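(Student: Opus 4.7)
The plan is to combine a second-order Taylor expansion of $g$ with a stable central limit theorem for a Hermitian quadratic form in the DFTs of martingale increments, identifying the asymptotic variance via Parseval's identity and It\^o isometry. The starting point is the expansion
\[
g(\chat{jj}{t_h}) - g(c_{jj}(t_h)) = \partial g(c_{jj}(t_h))\bigl[\chat{jj}{t_h}-c_{jj}(t_h)\bigr] + \tfrac{1}{2}\partial^2 g(\xi_h)\bigl[\chat{jj}{t_h}-c_{jj}(t_h)\bigr]^2,
\]
in which the second-order remainder is controlled by Proposition \ref{prop.msr}. In the univariate case the indicator $\1{j\ne k}$ vanishes, so $\E\bigl|\chat{jj}{t}-c_{jj}(t)\bigr|^2 \le K(M^{-2\alpha}+M/N)$ uniformly in $t$; with the theorem's choice $N = \lfloor n_j/2\rfloor - M + 1 \asymp n_j$, condition (\ref{cond.NM}) together with $\alpha > 1/2$ forces $n_j^{1/2}$ times the aggregated remainder to vanish. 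Indeed $M = o(N^{1/2})$ kills the $M/N$ contribution, while $M \gg N^{1/(1+2\alpha)}$ kills the $M^{-2\alpha}$ contribution (the arithmetic $1/(1+2\alpha) > 1/(4\alpha)$ uses $\alpha > 1/2$).

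Next I reduce the remaining linear Riemann sum to an integral and thence to a frequency-domain sum. The quadrature error between $n_j^{1/2}\sum_h \partial g(c_{jj}(t_h))[\chat{jj}{t_h}-c_{jj}(t_h)]\Delta_h$ and $n_j^{1/2}\int_0^T \partial g(c_{jj}(t))[\chat{jj}{t}-c_{jj}(t)]\ds t$ is $o_P(1)$ under (\ref{cond.BL}) and Assumption \ref{a.T}, exploiting the smoothness of the trigonometric polynomial $\chat{jj}{\cdot}$. Writing $f(t):=\partial g(c_{jj}(t))$, the boundary condition $c(0)=c(T)$ permits Parseval's identity and gives
\[
\int_0^T f(t)[\chat{jj}{t}-c_{jj}(t)]\ds t = \frac{1}{T}\sum_{|q|<M}\Bigl(1-\tfrac{|q|}{M}\Bigr)\overline{F(f)_q}\bigl[\Fc{jj}{q} - F(c_{jj})_q\bigr] + \mathrm{bias},
\]
where the deterministic bias collects the Fej\'er taper $|q|/M$ and the tail $|q|\ge M$; by the decay of Fourier coefficients of the $\alpha$-H\"older function $c_{jj}$, both contributions are $O(M^{-\alpha})$, hence $o(n_j^{-1/2})$ by the same arithmetic as above.

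I then establish the stable CLT for the weighted Fourier error. Decomposition (\ref{Fhat-F.decomp}) and bounds (\ref{errors.Fchat}) show that the drift component $R(0)$ and the discretization gap $\Fdm{j}{s} - F(\mathrm{d}M_j)_s$ inside $R(1)$ are $o_P(n_j^{-1/2})$ after summation against $\overline{F(f)_q}$, leaving a Hermitian quadratic form in $\{F(\mathrm{d}M_j)_s\}_{|s|\le N}$ with deterministic weights built from $f$ and the Fej\'er taper. Conditionally on $\sigma$, these Fourier--Stieltjes coefficients are centred complex Gaussian with covariance encoded by $\{F(c_{jj})_\cdot\}$; a direct second-moment calculation combining It\^o isometry with Parseval identifies the conditional asymptotic variance as $T\int_0^T[\partial g(c_{jj}(t))\,c_{jj}(t)]^2\ds t$, the characteristic factor $T$ stemming from the Fourier-series normalization $1/T$. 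Stability with respect to the underlying filtration is obtained by sequencing the quadratic form as a martingale-difference array along the $s$-index and invoking the stable CLT of \cite{j97}.

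The principal technical obstacle is this quadratic-form CLT in the critical regime $N \asymp n_j$, where the uniform discretization bound $N\Delta(n)\asymp 1$ from (\ref{errors.Fchat}) is no longer small. One must argue more delicately that, once weighted by the rapidly decaying $\overline{F(f)_q}$ and restricted to the diagonal $j=k$ (so that no cross-series asynchronicity intervenes), the high-frequency discretization errors in $\Fdm{j}{s} - F(\mathrm{d}M_j)_s$ cancel down to $o_P(n_j^{-1/2})$. A secondary but crucial ingredient is the boundary hypothesis $c(0)=c(T)$, which eliminates the Fej\'er border effect in the Fourier inversion and underpins the uniform consistency from Corollary \ref{corol.consis.spot} needed in the Riemann-sum step.
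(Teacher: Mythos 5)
Your outline (second-order Taylor expansion controlled by Proposition \ref{prop.msr}, Riemann-sum reduction, Parseval reduction of the linear term to a weighted sum of $\Fc{jj}{q}-F(c_{jj})_q$, then a CLT for that sum) matches the paper's skeleton, but two steps do not close as written. First, the deterministic bias you attribute to the Fej\'er taper and the Fourier tail is asserted to be $O(M^{-\alpha})$ and then dismissed ``by the same arithmetic as above''; but that arithmetic was calibrated to the quadratic remainder $N^{1/2}M^{-2\alpha}$. For a linear bias of size $M^{-\alpha}$ you would need $N^{1/2}M^{-\alpha}\to0$, i.e.\ $M\gg N^{1/(2\alpha)}$, which is incompatible with the constraint $M=o(N^{1/2})$ in (\ref{cond.NM}) whenever $\alpha\le1$. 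The paper escapes this by symmetrizing: with $\rho=\partial g(c_{jj})$ the bias equals $-\frac{1}{2T}\int_0^T\!\!\int_0^T F^M\big(\tfrac{u-t}{T}\big)\,[\rho(u)-\rho(t)]\,[c_{jj}(u)-c_{jj}(t)]\ds u\ds t$, so \emph{both} factors contribute a modulus of continuity and the bound improves to $O(M^{-2\alpha})$ plus a lower-order term, which is then killed by $M\gg N^{1/(1+2\alpha)}$ precisely because $\alpha>1/2$. Without this second-order cancellation your bias is not $o(n_j^{-1/2})$.

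Second, and more seriously, you correctly flag the ``principal technical obstacle'' --- that at $N\asymp n_j$ the bound $N\Delta(n)$ on $\Fdm{j}{s}-F(\mathrm{d}M_j)_s$ is $O(1)$, so the discretization errors must ``cancel down to $o_P(n_j^{-1/2})$'' --- but you do not supply that argument, and it is the crux of the theorem at this rate. The paper never needs such a cancellation: applying It\^o's formula directly to the product of the \emph{discretized} Fourier--Stieltjes transforms yields the exact identity (\ref{rep.Fc}), in which discretization enters the finite-variation part only through the shifted Dirichlet kernel $\dd{jj}{t,t}$, and on the diagonal $\dd{jj}{t,t}\equiv1$, so there is no discretization bias to cancel at all; the remaining terms $e(0),e(1)$ are genuine It\^o integrals whose brackets satisfy $\langle e,W_r\rangle_T\to0$ and $\langle e,e\rangle_T\to V(g)_{jj,T}$ (via (\ref{theta.jjjj}), each $\theta$ equal to $T/4$, summing to the factor $T$), which is what delivers the \emph{stable} CLT. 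Relatedly, your claim that $\{F(\mathrm{d}M_j)_s\}$ is conditionally Gaussian given $\sigma$ fails in the presence of leverage (the same $W$ may drive $c$), and a martingale-difference array ``along the $s$-index'' does not by itself give stability with respect to the underlying filtration --- that is exactly what the bracket condition against $W$ is for. These are not cosmetic omissions: without the Dirichlet-kernel decomposition the rate $n_j^{1/2}$ is not established.
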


\subsection{Scaled Dirichlet kernel} 
As trigonometric functions form the basis of Fourier analysis, the asymptotic bivariate and multivariate results are based on trigonometric polynomials. The assumptions on temporal spacing are formulated in terms of \textit{Dirichlet kernels}. The $q$-order Dirichlet kernel is defined as
\begin{equation}\label{def.Dirichlet}
	D^q(x) = \sum_{|s|\le q}e^{i2\pi sx},
\end{equation}
%for $x\notin\mathbb{N}$, we have $D^q(x) = \frac{\sin[\pi(2q+1)x]}{\sin(\pi x)}$, hence
and we have
\begin{equation}\label{rep.Dirichlet}
	D^q(x) = \left\{
	\begin{array}{cc}
		\frac{\sin[\pi(2q+1)x]}{\sin(\pi x)}, & x\notin\mathbb{N}\\
		2q+1, & x\in\mathbb{N}.
	\end{array}\right.
\end{equation}
Define step functions of time, for $j=1,\cdots,d$, 
\begin{equation}\label{def.theta_n}
\begin{array}{ll}
	%\tha{j}{t} &= \sup\big\{\ta{j}{h},\, \ta{j}{h} < t\big\} \\
	\thb{j}{t} &= \inf\big\{\ta{j}{h},\, t \le \ta{j}{h}\big\} \wedge \ta{j}{n_j}.
\end{array}
\end{equation}
Based on the Dirichlet kernel and the step functions, define the shifted and scaled Dirichlet kernel
\begin{equation}\label{def.dirichlet.scale}
	\dd{jk}{t,u} = \frac{1}{2N+1}D^N\Big(\frac{\thb{j}{t}-\thb{k}{u}}{T}\Big),\,\,\, j,k=1,\cdots,d.
\end{equation}
The function $\dd{jk}{t,u}$ was introduced in \cite{cg11} and it is indispensable to the asymptotic analysis in this paper.
As \cite{cg11}, we formulate the assumption on the irregular and asynchronous observation times through the shifted and scaled Dirichlet kernel.
\begin{Assump}{F}[Fej\'er kernels of time]\label{a.dd}
For $j,k,l,m=1,\cdots,d$, the quadratic integrals of $d^{n,N}_{jk}$ and $d^{n,N}_{lm}$ converge as $n,N\to\infty$. Specifically, $\exists$ $L^1$ functions $\tilde{\theta}_{jk,lm}$, $\acute{\theta}_{jk,lm}$, $\check{\theta}_{jk,lm}$, $\grave{\theta}_{jk,lm}$, such that $\forall t\in[0,T]$,
\begin{eqnarray*}
	\int_0^t N\int_0^u \dd{jk}{u,v}\,\dd{lm}{u,v} \ds v\ds u &\overset{\mathbb{P}}{\longrightarrow}& \int_0^t \tilde{\theta}_{jk,lm}(u) \ds u \\
	\int_0^t N\int_0^u \dd{jk}{u,v}\,\dd{lm}{v,u} \ds v\ds u &\overset{\mathbb{P}}{\longrightarrow}& \int_0^t \acute{\theta}_{jk,lm}(u) \ds u \\
	\int_0^t N\int_0^u \dd{jk}{v,u}\,\dd{lm}{u,v} \ds v\ds u &\overset{\mathbb{P}}{\longrightarrow}& \int_0^t \check{\theta}_{jk,lm}(u) \ds u \\
	\int_0^t N\int_0^u \dd{jk}{v,u}\,\dd{lm}{v,u} \ds v\ds u &\overset{\mathbb{P}}{\longrightarrow}& \int_0^t \grave{\theta}_{jk,lm}(u) \ds u.
\end{eqnarray*}
\end{Assump}
\begin{remk}
In order to have an intuitive understanding of assumption \ref{a.dd}, let's look at the case $j=k=l=m$. Assumption \ref{a.dd} implies $N\int_0^t\dd{jj}{t,u}^2\ds u \overset{\mathbb{P}}{\longrightarrow} \tilde{\theta}_{11,11}(t)$. Based on (\ref{def.dirichlet.scale}), (\ref{rep.Fejer}) and Riemann summation, we have
\begin{equation*}
	\tilde{\theta}_{jj,jj}(t) = \lim_{N\to\infty} \frac{N}{2N+1}\int_0^t F^{2N+1}\Big(\frac{t-u}{T}\Big)\ds u,
\end{equation*}
where $F^{2N+1}$ is the \textit{Fej\'er kernel} defined by (\ref{def.Fejer}). %$\forall j,l=1,\cdots,d$, $\tilde{\theta}_{jj,ll}(t)=\acute{\theta}_{jj,ll}(t)=\check{\theta}_{jj,ll}(t)=\grave{\theta}_{jj,ll}(t)$,
by the proof of lemma \ref{lem.dd.syn},
\begin{equation}\label{theta.jjjj}
	\tilde{\theta}_{jj,jj}(t) = \acute{\theta}_{jj,jj}(t) = \check{\theta}_{jj,jj}(t) = \grave{\theta}_{jj,jj}(t) = T/4.
\end{equation}
(\ref{theta.jjjj}) holds under assumption \ref{a.T} and is independent of assumption \ref{a.dd}. Assumption \ref{a.dd} is the condition by which (\ref{theta.jjjj}) can be generalized to the multivariate setting.
\end{remk}

\begin{remk}
The irregularity and asynchronicity in many time-domain techniques are formulated in a notion called \textit{quadratic variation of time} by \cite{mz06}. In the simplest case $j=k=l=m=1$, the quadratic variation of time $H$ is a function of time defined as a limit (which is assumed to exist and is differentiable):
\begin{equation*}
	H(t) = \lim_{n_1\to\infty}\frac{n_1}{T}\sum_{\ta{1}{h}\le t}\big(\ta{1}{h} - \ta{1}{h-1}\big)^2;
\end{equation*}
in the case $t\in[\ta{1}{h-1},\ta{1}{h+1})$ and $T=\pi$, %if we write in the discrete language of sum rather than the continuous language of integral, 
assumption \ref{a.dd} implies
\begin{equation*}
	\tilde{\theta}_{11,11}(t) = \lim_{N,n_1\to\infty} \frac{N}{(2N+1)^2}\sum_{\ta{1}{h}\le\thb{1}{t}} \big(\ta{1}{h} - \ta{1}{h-1}\big)\frac{\sin[(2N+1)(t-\ta{1}{h})]^2}{\sin(t-\ta{1}{h})^2}.
\end{equation*}
Both limits above are defined in probability. The counterpart of $H(t)$ in assumption \ref{a.dd} is $\widetilde{\Theta}_{11,11}(t)\coloneqq\int_0^t\tilde{\theta}_{11,11}(u)\ds u$. As we will see later, just as the time derivative $H'(t)$, the time derivative $\widetilde{\Theta}'_{11,11}(t)=\tilde{\theta}_{11,11}(t)$ appears in the asymptotic variances.
\end{remk}

\subsection{Functionals of bivariate volatility}\label{sec:bi}
Now we move on to the limit theorem for functionals of co-volatilities (off-diagonal elements in the volatility matrix). As it turns out, the asymptotic normality requires different conditions when the observations are synchronous and asynchronous.

\begin{Assump}{ST}[synchronous observations]\label{a.ST}
	$n_1=\cdots=n_d$ and $\min_j\ta{j}{h}=\max_j\ta{j}{h}$ for $h=1,\cdots, n_1$.
\end{Assump}

\begin{thm}\label{thm.bi}
Assume (\ref{def.X}), (\ref{cond.g}), assumption \ref{a.T}, \ref{a.dd}, \ref{a.U}, and assumption \ref{a.V} with $\alpha>1/2$, $c(0)=c(T)$, and choose the tunning parameters according to (\ref{cond.NM}) and (\ref{cond.BL}) with 
\begin{itemize}
	\item $N\le\lfloor(n_j\wedge n_k)/2\rfloor -M+1$ if assumption \ref{a.ST} holds,
	\item $N=o((n_j\wedge n_k)^{4/5})$ if assumption \ref{a.ST} does not hold,
\end{itemize}
then for $j,k=1,\cdots,d$,
\begin{equation*}
	%\Delta(n)^{-1/2}\big[\widehat{S}(g)^{n}_T -S(g)_T\big] \overset{\mathcal{L}-s}{\longrightarrow} \mathcal{MN}\big(0,V(g)_T\big),
	N^{1/2}\big[\widehat{S}(g)^n_{jk,T} -S(g)_{jk,T}\big] \overset{\mathcal{L}-s}{\longrightarrow} \mathcal{MN}\big(0,V(g)_{jk,T}\big),
\end{equation*}
where
\begin{equation*}
	%V(g)_{jk,T} = \int_0^T \partial g(c_{jk}(t))^2\big[ c_{jj}(t)\,c_{kk}(t) + c_{jk}(t)^2\big]\,\dot{H}_{jk,jk}(t)\ds t.
	V(g)_{jk,T} = \int_0^T \partial g(c_{jk}(t))^2 \times \Big\{\big[\tilde{\theta}_{jk,jk}(t)+\grave{\theta}_{jk,jk}(t)\big] c_{jj}(t)\,c_{kk}(t) + 2\check{\theta}_{jk,jk}(t)\, c_{jk}(t)^2\Big\} \ds t.
\end{equation*}
\end{thm}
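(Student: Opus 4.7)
\bigskip
\textbf{Proof proposal for Theorem \ref{thm.bi}.}

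The plan is to apply the standard delta-method linearization combined with a stable martingale central limit theorem for a double-indexed sum arising from the Fej\'er/Dirichlet kernel representation of $\chat{jk}{t}-c_{jk}(t)$. First I would Taylor-expand $g$:
\begin{equation*}
	g(\chat{jk}{t})-g(c_{jk}(t))=\partial g(c_{jk}(t))\,[\chat{jk}{t}-c_{jk}(t)]+\tfrac12\partial^2 g(\tilde c_{jk}(t))\,[\chat{jk}{t}-c_{jk}(t)]^2.
\end{equation*}
Using Proposition \ref{prop.msr}, the quadratic remainder in the Riemann sum is $O_{\mathbb P}(N^4/\underline n^4+M^{-2\alpha}+M/N)$, which under (\ref{cond.NM}), the condition $\alpha>1/2$ and $N=o(\underline n^{4/5})$ (or the milder synchronous bound) is $o_{\mathbb P}(N^{-1/2})$ after multiplication by $N^{1/2}$. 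Next, using $B/N^{1/2}\to\infty$ in (\ref{cond.BL}), the Riemann sum of the linear term differs from $\int_0^T\partial g(c_{jk}(t))[\chat{jk}{t}-c_{jk}(t)]\ds t$ by $o_{\mathbb P}(N^{-1/2})$. The boundary exclusion through $L\asymp B/M$ together with $c(0)=c(T)$ kills the border-effect contribution.

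Second, I would substitute the kernel representation analogous to (\ref{chat.rep}): writing $X=X(0)+A+M$ and using Remark \ref{remk.Fc.error} to discard the drift, one obtains
\begin{equation*}
	\chat{jk}{t}-c_{jk}(t)=\frac{1}{T(2N+1)}\sum_{h,v}F^M\Bigl(\tfrac{t-\thb{j}{\tau^j_h}}{T}\Bigr)D^N\Bigl(\tfrac{\tau^j_h-\tau^k_v}{T}\Bigr)\,\delta^j_h(M_j)\,\delta^k_v(M_k)-c_{jk}(t)+\text{bias}.
\end{equation*}
Multiplying by $\partial g(c_{jk}(t))$ and integrating in $t$, the Fej\'er kernel acts as a delta sequence in the regular variable $t$, yielding at leading order
\begin{equation*}
	\int_0^T\partial g(c_{jk}(t))[\chat{jk}{t}-c_{jk}(t)]\ds t \approx \frac{1}{2N+1}\sum_{h,v}\partial g(c_{jk}(\tau^j_h))\,D^N\Bigl(\tfrac{\tau^j_h-\tau^k_v}{T}\Bigr)\,\delta^j_h(M_j)\,\delta^k_v(M_k)+\text{centering},
\end{equation*}
where the centering corresponds to the diagonal $h=v$-type contribution that reproduces $\int\partial g(c_{jk})c_{jk}\ds t$. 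This is the doubly-indexed discrete stochastic integral whose asymptotics we must analyze.

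Third, I would apply the stable CLT of \cite{j97,jp98} to the martingale array obtained by reordering the pairs $(h,v)$ along the combined filtration generated by $(\tau^j_h\vee\tau^k_v)$. The predictable quadratic variation splits, via It\^o isometry over asynchronous intervals and the identity $\E[\delta^j_h(M_j)\delta^k_v(M_k)\mid\mathcal F]$ computed over the intersecting increments, into four quadruple sums whose kernels are $\dd{jk}{t,u}\dd{jk}{t,u}$, $\dd{jk}{t,u}\dd{jk}{u,t}$, $\dd{jk}{u,t}\dd{jk}{t,u}$, $\dd{jk}{u,t}\dd{jk}{u,t}$ multiplied by $c_{jj}(t)c_{kk}(u)$, $c_{jk}(t)c_{jk}(u)$, $c_{jk}(t)c_{jk}(u)$, $c_{jj}(t)c_{kk}(u)$ respectively. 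Under Assumption \ref{a.dd}, these converge to the $\tilde\theta$, $\acute\theta$, $\check\theta$, $\grave\theta$ objects, and collapsing via symmetry of the Dirichlet kernel (and the identification $\acute\theta_{jk,jk}=\check\theta_{jk,jk}$ after time reversal) gives exactly the expression
\begin{equation*}
	V(g)_{jk,T}=\int_0^T\partial g(c_{jk}(t))^2\bigl\{[\tilde\theta_{jk,jk}(t)+\grave\theta_{jk,jk}(t)]c_{jj}(t)c_{kk}(t)+2\check\theta_{jk,jk}(t)c_{jk}(t)^2\bigr\}\ds t.
\end{equation*}
The Lindeberg and nesting-stability conditions follow from the uniform boundedness of $\partial g$ on $\s_m^\epsilon$ (via the localization $\tau_m$ of Assumption \ref{a.U}) combined with the Gaussianity of increments.

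The main obstacle, and the source of the dichotomy between the synchronous and asynchronous cases, is controlling the leading bias term $N^{1/2}\int_0^T\partial g(c_{jk}(t))\,\E[\chat{jk}{t}-c_{jk}(t)\mid\F]\ds t$ arising from the discretization-asynchronicity remainder $R(1)^{n,N}_{jk,q}$ in (\ref{Fhat-F.decomp}). In the synchronous case (Assumption \ref{a.ST}) the off-diagonal observation-mismatch cancels pairwise through the Dirichlet kernel evaluated at integer multiples of the common mesh, so $N\le\lfloor(n_j\wedge n_k)/2\rfloor-M+1$ suffices. In the asynchronous case, Remark \ref{remk.N} and the $N^2\Delta(n)^2$ magnitude in Proposition \ref{prop.msr} show the bias is of order $N^2/\underline n^2$, hence $N^{1/2}\cdot N^2/\underline n^2\to 0$ forces exactly $N=o((n_j\wedge n_k)^{4/5})$; proving the sharpness of this bias bound and showing it holds uniformly after integration against $\partial g(c_{jk}(t))$ is the delicate step I would need to push through carefully using the cancellations inside the Bohr convolution.
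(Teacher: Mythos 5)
Your architecture matches the paper's: linearize $g$, kill the Riemann-sum and second-order remainders via Proposition \ref{prop.msr} and (\ref{cond.BL}), reduce everything to the stable convergence of $N^{1/2}\int_0^T\partial g(c_{jk}(t))\big[\chat{jk}{t}-c_{jk}(t)\big]\ds t$, read off the asymptotic variance from Assumption \ref{a.dd}, and isolate an asynchronicity bias of size $N^{1/2}\cdot N^2\Delta(n)^2$ that forces $N=o((n_j\wedge n_k)^{4/5})$. The paper, however, never reorders the double sum over $(h,v)$ into a martingale array: it writes the leading term as iterated continuous-time It\^o integrals $e(0)^{n,N,M}_{jk,T}$ and $e(1)^{n,N,M}_{jk,T}$ against the scaled Dirichlet kernel $\dd{jk}{\cdot,\cdot}$ and verifies the stable CLT of \cite{j97,jp98} through the brackets $\langle e,W_r\rangle_T\to0$ and $\langle e,e\rangle_T\to V(g)_{jk,T}$, with the negligible bracket pieces controlled by Lemma \ref{lem.dp}. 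This sidesteps the filtration bookkeeping that your reordering of the pairs $(h,v)$ would require, which is a nontrivial part of your plan that you have not actually carried out.

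There is one genuine gap. When you pass from $\int_0^T\partial g(c_{jk}(t))\,\chat{jk}{t}\ds t$ to a sum weighted by $\partial g(c_{jk}(\tau^j_h))$ by saying the Fej\'er kernel ``acts as a delta sequence at leading order,'' the error you discard is, after scaling, $N^{1/2}\int_0^T\big[\widehat{\rho}^M_{jk}(\thb{j}{t})-\rho_{jk}(t)\big]c_{jk}(t)\ds t$ with $\rho_{jk}=\partial g(c_{jk})$ and $\widehat{\rho}^M_{jk}$ its Fej\'er smoothing. The naive delta-sequence bound (\ref{approximation}) gives only $N^{1/2}\omega_{\rho}(1/M)\asymp N^{1/2}M^{-\alpha}$, which \emph{diverges} under (\ref{cond.NM}) whenever $\alpha<1$, since $M=o(N^{1/2})$ forces $N^{1/2}M^{-\alpha}\gtrsim N^{(1-\alpha)/2}$. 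The paper rescues this term by symmetrizing the double integral $\int_0^T\int_0^T F^M[(u-t)/T]\,[\rho_{jk}(u)-\rho_{jk}(t)]\,c_{jk}(t)\ds t\ds u$ in $(t,u)$, which produces the product $[\rho_{jk}(u)-\rho_{jk}(t)][c_{jk}(u)-c_{jk}(t)]$ and hence the squared modulus $M^{-2\alpha}$; only then does $N^{1/2}M^{-2\alpha}\to0$ follow from $\alpha>1/2$ and (\ref{cond.NM}). Without this cancellation, or an equivalent one, your leading-order replacement is not justified. The rest of your plan --- the four quadratic-variation kernels, their identification with $\tilde\theta,\acute\theta,\check\theta,\grave\theta$, the equality $\acute\theta_{jk,jk}=\check\theta_{jk,jk}$, and the $N^{5/2}\Delta(n)^2$ bias computation yielding the $4/5$ exponent --- agrees with the paper.
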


\begin{remk}\label{remk.rate}
Compare theorem \ref{thm.uni} and theorem \ref{thm.bi}, we can see that the asymptotic properties of functionals acting on diagonal and off-diagonal elements are drastically different.
\begin{itemize}
	\item \textit{Convergence rates}. For functionals of diagonal elements, the convergence rate can be $n_j^{1/2}$ by choosing $N$ as large as $\lfloor n_j/2\rfloor-M+1$; for functionals of off-diagonal elements, due to the impact of asynchronous observations (also see remark \ref{remk.N}), in order that the limit distribution is a centered mixed normal, we can only choose $N$ smaller than $(n_j\wedge n_k)^{4/5}$ and the attendant convergence rate is strictly less than $(n_j\wedge n_k)^{2/5}$;
	\item \textit{Asymptotic variances}. For functionals of diagonal elements, asymptotic variances are independent of the temporal spacing; on the contrary for functionals of off-diagonal elements, the temporal spacing leaves its imprint on asymptotic variances through $\tilde{\theta}_{jk,jk},\,\acute{\theta}_{jk,jk},\,\check{\theta}_{jk,jk},\,\grave{\theta}_{jk,jk}$ defined in assumption \ref{a.dd}.
\end{itemize}
\end{remk}

\subsection{Functionals of multivariate volatility}\label{sec:multi}
Now, let's look at the fully-fledged result. The object to estimate is $S(g)_T$ defined in (\ref{def.S(g)}), its estimator is $\widehat{S}(g)^n_T$ defined in (\ref{def.Shat}). Our goal is to seek central limit theorems for functionals of a whole volatility matrix in various circumstances.

\begin{thm}\label{thm.multi}
Assume (\ref{def.X}), (\ref{cond.g}), assumption \ref{a.T}, \ref{a.dd}, \ref{a.U}, and assumption \ref{a.V} with $\alpha>1/2$, $c(0)=c(T)$, and choose the tunning parameters according to (\ref{cond.NM}), (\ref{cond.BL}) with
\begin{itemize}
	\item $N\le\lfloor\underline{n}/2\rfloor - M+1$ if assumption \ref{a.ST} holds,
	\item $N=o(\underline{n}^{4/5})$ if assumption \ref{a.ST} does not hold,
\end{itemize}
then we have
\begin{equation*}
	N^{1/2}\big[\widehat{S}(g)^{n}_T -S(g)_T\big] \overset{\mathcal{L}-s}{\longrightarrow} \mathcal{MN}\big(0,V(g)_T\big),
\end{equation*}
where
\begin{multline}\label{def.V(g)}
	V(g)_T = \sum_{j,k,l,m=1}^d\int_0^T \partial_{jk} g(c(t))\,\partial_{lm} g(c(t)) \times \\ 
	\Big\{\big[\tilde{\theta}_{jk,lm}(t)+\grave{\theta}_{jk,lm}(t)\big] c_{jl}(t)\,c_{km}(t) + \big[\acute{\theta}_{jk,lm}(t)+\check{\theta}_{jk,lm}(t)\big] c_{jm}(t)\,c_{kl}(t)\Big\} \ds t.
\end{multline}
\end{thm}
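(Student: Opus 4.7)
The plan is to follow the plug-in strategy of Jacod--Rosenbaum, adapted to the Fourier spot estimator. First, Taylor-expand $g$ at each plug-in point $t_h = hT/B$:
\[
g(\widehat{c}(t_h)) - g(c(t_h)) = \sum_{j,k}\partial_{jk}g(c(t_h))\bigl(\widehat{c}_{jk}(t_h)-c_{jk}(t_h)\bigr)+R_h,
\]
and show the remainder is negligible after scaling by $N^{1/2}$. Condition (\ref{cond.g}) together with the localization provided by Assumption~\ref{a.U} lets us upgrade the polynomial growth of $\partial^2 g$ to boundedness on the relevant compact, so $|R_h|\lesssim|\widehat{c}(t_h)-c(t_h)|^2$. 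Proposition~\ref{prop.msr} then gives $\E\sum_h|R_h|T/B = O(M/N + M^{-2\alpha} + N^4\underline{n}^{-4}\mathds{1}_{\{j\ne k\}})$; multiplied by $N^{1/2}$ each piece vanishes under (\ref{cond.NM}), $\alpha>1/2$, and either Assumption~\ref{a.ST} or $N=o(\underline{n}^{4/5})$ (the latter precisely makes $N^{1/2}\cdot N^2\underline{n}^{-2}\to 0$). The Riemann-sum error is $O_P(1/B)=o_P(N^{-1/2})$ since $B/N^{1/2}\to\infty$; the boundary truncation of width $L\asymp B/M$ in the case $c(0)\ne c(T)$ contributes only $O_P(1/M)=o_P(N^{-1/2})$, exactly absorbing the endpoint ``border effect'' of the Fej\'er inversion.

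Second, I would open the linear term using the Fourier inversion (\ref{def.chat}) and the matrix form (\ref{def.Fc.mat}). By Remark~\ref{remk.Fc.error} we may replace $X$ by (\ref{def.X.mtg}), so the principal part of $\widehat{c}_{jk}(t)-c_{jk}(t)$ is a double stochastic integral against $W$ whose kernel is built from the shifted scaled Dirichlet kernel $d^{n,N}_{jk}(t,\cdot)$ of (\ref{def.dirichlet.scale}) weighted by the Fej\'er factors in (\ref{def.chat}); the residual delta-sequence approximation error is of order $M^{-\alpha}$ and the asynchronicity bias is $N^2\underline{n}^{-2}$, both negligible after scaling. Integrating in $t$ against $\partial_{jk}g(c(t))$, swapping the order of integration via stochastic Fubini, and aggregating over $(j,k)$ then represents $N^{1/2}[\widehat{S}(g)^n_T - S(g)_T]$, up to $o_P(1)$, as a single It\^o integral $\int_0^T H^n(u)^\T\,\mathrm{d}W(u)$ with explicit predictable integrand $H^n$.

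Third, I would apply a stable martingale CLT in the style of \cite{jp98}. The two required conditions are (a) convergence in probability of the predictable quadratic variation $\int_0^T H^n(u)\,H^n(u)^\T\,\mathrm{d}u$ to $V(g)_T$, and (b) asymptotic orthogonality of the martingale to any bounded martingale independent of $W$, so that the limit is a mixed normal with no drift. Condition (b) follows because the rapidly oscillating factors $e^{i2\pi q t/T}$ embedded in $H^n$ kill cross-covariations by Riemann--Lebesgue-type averaging. For (a), the $(j,k)\times(l,m)$ cross-terms reduce, after the It\^o isometry and reordering of the inner Brownian increments, to integrals of exactly the four forms appearing in Assumption~\ref{a.dd}; the two possible pairings of rows of $\sigma$ (matching the inner vs.\ the outer Brownian variable) produce the two covariance couplings $c_{jl}c_{km}$ and $c_{jm}c_{kl}$ in (\ref{def.V(g)}), each weighted by the appropriate member of $\{\tilde\theta,\acute\theta,\check\theta,\grave\theta\}$. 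In the synchronous case, Assumption~\ref{a.ST} zeroes out the asynchronicity bias identically, which is why $N$ may be pushed to $\lfloor\underline{n}/2\rfloor - M+1$ without any change to the rest of the argument.

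The main technical obstacle is condition (a): disentangling the Ces\`aro/Fej\'er weighting coming from (\ref{def.chat}) from the Dirichlet kernel inside the Bohr convolution (\ref{def.Fhat}) when identifying the limit of the predictable quadratic variation, and tracking which of the four configurations of the time arguments $(t,u,v)$ in the double stochastic integral maps to which of $\tilde\theta,\acute\theta,\check\theta,\grave\theta$, rather than being collapsed by some symmetry. A secondary care-point is the localization argument via Assumption~\ref{a.U}, which must be carried coherently through the passage to (\ref{def.X.mtg}), the Taylor remainder bound, and the stochastic Fubini used in the second step, so that uniform bounds on $b$, $c$, $\sigma$, $g$, $\partial g$, and $\partial^2 g$ are available simultaneously.
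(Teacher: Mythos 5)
Your overall architecture matches the paper's proof: the same three-way decomposition into Riemann-sum error, second-order Taylor remainder (killed by Proposition \ref{prop.msr} and (\ref{cond.NM})), and a linear term represented as an It\^o martingale whose bracket with $W$ vanishes and whose predictable quadratic variation converges to $V(g)_T$ through the four configurations of the scaled Dirichlet kernels in Assumption \ref{a.dd}; the role of $N=o(\underline{n}^{4/5})$ versus Assumption \ref{a.ST} is also correctly located in the $\dd{jk}{t,t}-1$ term.

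There is, however, one genuine gap in your second step: you dispose of the Fej\'er (delta-sequence) approximation bias by asserting it is ``of order $M^{-\alpha}$ and negligible after scaling.'' It is not. Under (\ref{cond.NM}) the theorem only requires $M/N^{1/(1+2\alpha)}\to\infty$ and $M/N^{1/2}\to0$; for $\tfrac12<\alpha\le1$ this permits, say, $M\asymp N^{1/(1+2\alpha)}\log N$, for which $N^{1/2}M^{-\alpha}\to\infty$, so the pointwise bias bound of Proposition \ref{prop.msr} is useless after multiplication by $N^{1/2}$. The paper's proof must therefore exploit a cancellation specific to the \emph{integrated} bias: writing $\rho_{jk}=\partial_{jk}g(c)$ and passing through Parseval, the bias of the linear term becomes (up to negligible pieces) the quantity $J^M_{jk,T}=\int_0^T\int_0^T F^M\big(\tfrac{u-t}{T}\big)[\rho_{jk}(u)-\rho_{jk}(t)]\,c_{jk}(t)\ds t\ds u$, which by symmetrizing in $(t,u)$ equals $-\tfrac12\int\int F^M\big(\tfrac{u-t}{T}\big)[\rho_{jk}(u)-\rho_{jk}(t)][c_{jk}(u)-c_{jk}(t)]\ds t\ds u$. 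Since $\rho_{jk}$ inherits the H\"older modulus of $c$ via (\ref{cond.g}), both bracketed differences contribute a factor $\omega_c(1/M)\asymp M^{-\alpha}$, upgrading the bound to $O(M^{-2\alpha})$ and making $N^{1/2}M^{-2\alpha}\to0$ precisely because $\alpha\ge\tfrac12$ gives $1/(4\alpha)\le1/(1+2\alpha)$. Without this symmetrization your linear term does not converge under the stated tuning. Two smaller points: your Riemann--Lebesgue heuristic for the orthogonality bracket $\langle e,W_r\rangle_T\to0$ is not the operative mechanism (the paper uses the $L^p$ concentration of the scaled Dirichlet kernel, Lemma \ref{lem.dp}, to get $\E\langle e,W_r\rangle_T^2\le KN^{1-2/p}$ for $p\in(1,2)$), and the predictable quadratic variation also contains iterated-martingale fluctuation terms (the $Z$-type terms of Lemma \ref{lem.U2.Z2}) that require the bound (\ref{bdd.dd}) to be shown negligible; you flag the latter as a difficulty but do not supply the argument.
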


We have the following corollary which immediately follows from theorem \ref{thm.multi} and lemma \ref{lem.dd.syn}. It states that when different time series are observed synchronously, the functional estimator based on the Fourier transform method can be rate optimal and efficient.  
\begin{corol}\label{corol.syn}
Assume (\ref{def.X}), (\ref{cond.g}), assumption \ref{a.T}, \ref{a.ST}, \ref{a.U}, \ref{a.V} with $\alpha>1/2$, $c(0)=c(T)$. Choose the tunning parameters according to (\ref{cond.NM}), (\ref{cond.BL}) with $N=\lfloor n/2\rfloor-M+1$, we have
\begin{equation*}
	\Delta(n)^{-1/2}\big[\widehat{S}(g)^{n}_T -S(g)_T\big] \overset{\mathcal{L}-s}{\longrightarrow} \mathcal{MN}\big(0,V(g)^*_T\big),
\end{equation*}
where
\begin{equation*}
	V(g)^*_T = \sum_{j,k,l,m=1}^d\int_0^T \partial_{jk} g(c(t))\, \partial_{lm} g(c(t)) \times \big[ c_{jl}(t)\,c_{km}(t) +  c_{jm}(t)\,c_{kl}(t) \big] \ds t.
\end{equation*}
\end{corol}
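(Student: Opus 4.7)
The plan is to obtain the corollary as a direct specialization of Theorem \ref{thm.multi} in the synchronous regime, using Lemma \ref{lem.dd.syn} to evaluate the limits of the shifted scaled Dirichlet kernels. First I would verify the hypotheses: assumptions \ref{a.T}, \ref{a.U}, \ref{a.V} with $\alpha>1/2$, the functional regularity \eqref{cond.g}, and $c(0)=c(T)$ are imposed directly; assumption \ref{a.dd} is supplied by Lemma \ref{lem.dd.syn} applied under assumption \ref{a.ST}; and the choice $N=\lfloor n/2\rfloor - M + 1$ with $\underline{n}=n$ is exactly the maximal admissible value allowed in the synchronous branch of the theorem. Theorem \ref{thm.multi} then gives
\begin{equation*}
N^{1/2}\bigl[\widehat{S}(g)^n_T - S(g)_T\bigr] \overset{\mathcal{L}-s}{\longrightarrow} \mathcal{MN}\bigl(0,\, V(g)_T\bigr)
\end{equation*}
with $V(g)_T$ as in \eqref{def.V(g)}.

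Next I would apply Lemma \ref{lem.dd.syn} to simplify the limiting theta-functions. Under synchronicity the step functions $\thb{j}{\cdot}$ coincide across $j$, so the scaled Dirichlet kernels $\dd{jk}{t,u}$ are independent of the pair $(j,k)$ and the multivariate analysis collapses onto the univariate Fej\'er-kernel computation already recorded in \eqref{theta.jjjj}, yielding $\tilde{\theta}_{jk,lm}(t) = \acute{\theta}_{jk,lm}(t) = \check{\theta}_{jk,lm}(t) = \grave{\theta}_{jk,lm}(t) = T/4$ for every index quadruple and every $t\in[0,T]$. Substituting these constants into \eqref{def.V(g)} contracts the braced term to $(T/2)\bigl[c_{jl}(t)c_{km}(t)+c_{jm}(t)c_{kl}(t)\bigr]$, so that $V(g)_T = (T/2)\, V(g)^*_T$.

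Finally I would translate the rate $N^{1/2}$ into $\Delta(n)^{-1/2}$ by noting that under assumptions \ref{a.T} and \ref{a.ST} one has $\Delta(n) \asymp T/n$, and with $M=o(N^{1/2})$ the chosen $N$ satisfies $N/n \to 1/2$, so $N\Delta(n) \to T/2$. Writing
\begin{equation*}
\Delta(n)^{-1/2}\bigl[\widehat{S}(g)^n_T - S(g)_T\bigr] = \bigl(N\Delta(n)\bigr)^{-1/2}\, N^{1/2}\bigl[\widehat{S}(g)^n_T - S(g)_T\bigr],
\end{equation*}
the converging-together lemma for stable convergence delivers the limit $\mathcal{MN}\bigl(0,\,(2/T)\,V(g)_T\bigr) = \mathcal{MN}\bigl(0,\,V(g)^*_T\bigr)$. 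The only real work is Lemma \ref{lem.dd.syn}, which I treat here as a black box; conditional on it, the corollary reduces to a change of normalization together with a direct substitution of the Fej\'er-kernel constants into \eqref{def.V(g)}. The most delicate bookkeeping point is reconciling the factor $T/4$ coming from the integrated kernel with the prefactor $(N\Delta(n))^{-1}\to 2/T$ appearing in the rescaling, but these two ingredients multiply out cleanly because the same ratio $N/(2N+1)\to 1/2$ drives both.
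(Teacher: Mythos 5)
Your proposal is correct and follows exactly the paper's route: the paper derives this corollary as an immediate consequence of Theorem \ref{thm.multi} together with Lemma \ref{lem.dd.syn}, which forces all four theta-functions to equal $T/4$ and hence $V(g)_T=(T/2)V(g)^*_T$, after which the rescaling from $N^{1/2}$ to $\Delta(n)^{-1/2}$ is the same change of normalization you describe. The only point you (and the paper) gloss over is that the exact constant in the limit requires $n\Delta(n)\to T$ rather than merely $n\Delta(n)\asymp T$ as guaranteed by assumption \ref{a.T}, but this is consistent with the paper's own level of detail.
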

The convergence rate $\Delta(n)^{-1/2} \asymp n^{1/2}$ is optimal and the asymptotic variance $V(g)^*_T$ achieves the efficiency bound, cf. \cite{jr13} and \cite{cdg13}.

We provide an estimator of the asymptotic variance (\ref{def.V(g)}), which is defined as
\begin{equation}\label{def.AVARest}
	\widehat{V}(g)^{n}_T = \sum_{j,k,l,m=1}^d \left[\widehat{V}(0)^{n,N,M,B}_{jk,lm,T} + \widehat{V}(1)^{n,N,M,B}_{jk,lm,T}\right],
\end{equation}
where
\begin{eqnarray*}
	\widehat{V}(0)^{n,N,M,B}_{jk,lm,T} &=& \frac{T}{B} \sum_{h=1}^B \partial_{jk}g(\chat{}{t_h})\,\partial_{lm}g(\chat{}{t_h})\, \chat{jl}{t_h}\, \chat{km}{t_h} \\ 
	&& \times N\delta(n) \sum_{v=1}^{\lfloor t_h/\delta(n)\rfloor} \Big[\dd{jk}{t_h,\vartheta_v}\,\dd{lm}{t_h,\vartheta_v} + \dd{jk}{\vartheta_v,t_h}\,\dd{lm}{\vartheta_v,t_h}\Big] \\
	\widehat{V}(1)^{n,N,M,B}_{jk,lm,T} &=& \frac{T}{B} \sum_{h=1}^B \partial_{jk}g(\chat{}{t_h})\,\partial_{lm}g(\chat{}{t_h})\, \chat{jm}{t_h}\, \chat{kl}{t_h} \\ 
	&& \times N\delta(n) \sum_{v=1}^{\lfloor t_h/\delta(n)\rfloor} \Big[\dd{jk}{t_h,\vartheta_v}\,\dd{lm}{\vartheta_v,t_h} + \dd{jk}{\vartheta_v,t_h}\,\dd{lm}{t_h,\vartheta_v}\Big],
\end{eqnarray*}
and $t_h = hT/B$ with $B$ satisfying (\ref{cond.BL}), $\vartheta_v = v\delta(n)$, $\delta(n)=\min_j\min_h\Delta^j_h$.

According to (\ref{cond.g}), corollary \ref{corol.consis.spot}, and the choices of $t_h$ and $\vartheta_v$, it immediately follows that under the conditions of theorem \ref{thm.multi},
\begin{equation*}
\widehat{V}(g)^{n}_T \overset{\mathbb{P}}{\longrightarrow} V(g)_T,
\end{equation*}
hence we have the following corollary.
\begin{corol}
	Under the conditions of theorem \ref{thm.multi}, on the event $\big\{\widehat{V}(g)^{n}_T\text{ is positive semidefinite}\big\}$,
	\begin{equation*}
		N^{1/2}\big(\widehat{V}(g)^{n}_T\big)^{-1/2}\big[\widehat{S}(g)^{n}_T -S(g)_T\big] \overset{\mathcal{L}}{\longrightarrow} N(0,I).
	\end{equation*}
\end{corol}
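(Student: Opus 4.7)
The plan is to combine Theorem~\ref{thm.multi} with the consistency $\widehat{V}(g)^{n}_T \overset{\mathbb{P}}{\longrightarrow} V(g)_T$ established immediately before the corollary, via a Slutsky-type argument adapted to stable convergence. The main input is that stable convergence in law is strong enough to be compatible with joint convergence against any quantity that converges in probability on the original space, so in particular one may multiply a stably converging sequence by a converging-in-probability symmetric matrix and obtain stable convergence of the product.

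First, I would observe that on the event $\mathcal{E}_n=\{\widehat{V}(g)^{n}_T\text{ is positive semidefinite}\}$ the matrix square root $\widehat{V}(g)^{n}_T{}^{1/2}$ is well defined, and on the event $\mathcal{E}=\{V(g)_T\text{ is positive definite}\}$ (which holds almost surely under the nondegeneracy implicit in the asymptotic variance formula~(\ref{def.V(g)}), since the integrand is a positive quadratic form in $c$) the inverse square root $V(g)_T^{-1/2}$ is continuous at $V(g)_T$. Applying the continuous mapping theorem to $V\mapsto V^{-1/2}$ at points of $\mathcal{E}$ gives
\begin{equation*}
\bigl(\widehat{V}(g)^{n}_T\bigr)^{-1/2} \overset{\mathbb{P}}{\longrightarrow} V(g)_T^{-1/2}
\end{equation*}
on the intersection $\mathcal{E}_n\cap\mathcal{E}$, whose probability tends to one.

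Next, Theorem~\ref{thm.multi} gives $N^{1/2}[\widehat{S}(g)^{n}_T - S(g)_T]\overset{\mathcal{L}\text{-}s}{\longrightarrow}\mathcal{MN}(0,V(g)_T)$. The mixed normal limit can be represented, on a suitable extension of the original probability space, as $V(g)_T^{1/2}Z$ with $Z\sim N(0,I)$ independent of $\mathcal{F}$. By the basic joint property of stable convergence, whenever $Y_n\overset{\mathcal{L}\text{-}s}{\longrightarrow}Y$ and $A_n\overset{\mathbb{P}}{\longrightarrow}A$ with $A$ $\mathcal{F}$-measurable, one has $(A_n,Y_n)\overset{\mathcal{L}\text{-}s}{\longrightarrow}(A,Y)$, and then by the continuous mapping theorem $A_nY_n\overset{\mathcal{L}\text{-}s}{\longrightarrow}AY$. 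Applying this with $A_n=(\widehat{V}(g)^{n}_T)^{-1/2}$ and $Y_n=N^{1/2}[\widehat{S}(g)^{n}_T-S(g)_T]$ yields
\begin{equation*}
N^{1/2}\bigl(\widehat{V}(g)^{n}_T\bigr)^{-1/2}\bigl[\widehat{S}(g)^{n}_T-S(g)_T\bigr] \overset{\mathcal{L}\text{-}s}{\longrightarrow} V(g)_T^{-1/2}\,V(g)_T^{1/2}Z = Z \sim N(0,I).
\end{equation*}
Stable convergence to a deterministic distribution implies ordinary convergence in law, and the restriction to the event $\mathcal{E}_n$ is handled because $\mathbb{P}(\mathcal{E}_n^c)\to 0$.

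I expect no genuine obstacle beyond bookkeeping: the only mildly delicate point is to argue that $V(g)_T$ is almost surely positive definite so that $V(g)_T^{-1/2}$ exists at the limit, and that $\mathbb{P}(\mathcal{E}_n)\to 1$ so the studentized statistic is well defined with probability tending to one; both reduce to inspecting (\ref{def.V(g)}) and invoking the consistency of $\widehat{V}(g)^{n}_T$. Everything else is a direct application of the Slutsky property of stable convergence to the already-established Theorem~\ref{thm.multi} and the consistency statement preceding the corollary.
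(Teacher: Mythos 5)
Your argument is correct and is exactly the one the paper intends: the paper offers no separate proof, simply noting $\widehat{V}(g)^{n}_T \overset{\mathbb{P}}{\longrightarrow} V(g)_T$ and invoking the Slutsky-type compatibility of stable convergence with convergence in probability of $\F$-measurable limits, which is precisely your route. Your additional remarks on the positive definiteness of $V(g)_T$ and on $\mathbb{P}(\mathcal{E}_n)\to1$ are sensible bookkeeping that the paper leaves implicit.
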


\subsection{Asychronicity biases and wave interference}
When different time series are observed synchronous, or the objects are univariate volatility functionals, by theorem \ref{thm.uni} and corollary \ref{corol.syn}, the functional estimators are not only rate-optimal but also efficient by taking $N=\lfloor\underline{n}/2\rfloor-M+1$. 

However, in the presence of asynchronous observations, the condition $N=o(\underline{n}^{4/5})$ in theorem \ref{thm.bi}, \ref{thm.multi} means the convergence rate is strictly less than $\underline{n}^{2/5}$. If we allow the limit distribution to be non-centered, we can improve the convergence rate to be exact $\underline{n}^{2/5}$. To formulate this non-centered asymptotic result, we define ``\textit{cubic variation of time}'' as $P^n_{jk}(t)\coloneqq \underline{n}^2\int_0^t \big[\thb{j}{u} - \thb{k}{u}\big]^2\ds u$, note
\begin{eqnarray*}
	P^n_{jk}(t)
	&=& \underline{n}^2\sum_{I^j_h\cap I^k_v\ne\emptyset} \Big[\big(\tab{j}{k}{h}{v}-\taa{j}{k}{h}{v}\big)^2\,|\ta{j}{h-1}-\ta{k}{v-1}|\, \mathds{1}_{\{\ta{j}{h-1}\wedge\ta{k}{v-1}\le t;\, I^j_h\not\subseteq I^k_v \text{ and } I^k_v\not\subseteq I^j_h\}} \\
	&&\hspace{15mm} + \big(\ta{j}{h}\vee\ta{k}{v}-\taa{j}{k}{h}{v}\big)^2\,|\ta{j}{h-1}-\ta{k}{v-1}|\, \mathds{1}_{\{\ta{j}{h-1}\wedge\ta{k}{v-1}\le t;\, I^j_h\subseteq I^k_v \text{ or } I^k_v\subseteq I^j_h\}} \\
	&&\hspace{15mm} + \big(\tab{j}{k}{h}{v}-\taa{j}{k}{h}{v}\big)\,|\ta{j}{h}-\ta{k}{v}|^2\, \mathds{1}_{\{\taa{j}{k}{h}{v}\le t\}} \Big],
\end{eqnarray*}
and under assumption \ref{a.ST}, $P^n_{jk}(t)=0$ uniformly.

\begin{Assump}{$\Theta$}[cubic variations of time]\label{a.theta}
$\forall j,k=1,\cdots,d$, $\exists$ an integrable function $\varrho_{jk}$, such that $\forall t\in[0,T]$, as $n\to\infty$
\begin{equation*}
	P^n_{jk}(t) \overset{\mathbb{P}}{\longrightarrow} \int_0^t \varrho_{jk}(u)\ds u.
\end{equation*}
\end{Assump}

The next proposition states a limit result with exact rate $\underline{n}^{2/5}$, and the cubic variation of time emerges as the bias in the asymptotic distribution.

\begin{prop}\label{prop.noncenter}
Assume (\ref{def.X}), (\ref{cond.g}), assumption \ref{a.T}, \ref{a.dd}, \ref{a.theta}, \ref{a.U}, and assumption \ref{a.V} with $\alpha>1/2$, $c(0)=c(T)$. Choose the tunning parameters according to (\ref{cond.NM}), (\ref{cond.BL}) with $N=\lfloor \kappa\underline{n}^{4/5}\rfloor\land(\lfloor \underline{n}/2\rfloor-M+1)$,
we have
\begin{equation*}
	\underline{n}^{2/5}\big[\widehat{S}(g)^{n}_T -S(g)_T\big] \overset{\mathcal{L}-s}{\longrightarrow} \mathcal{MN}\big(\mu(g)_T,V(g)_T\big),
\end{equation*}
where $V(g)_T$ is defined as (\ref{def.V(g)}) and
\begin{equation*}
	\mu(g)_T = -\frac{2\pi^2\kappa^{5/2}}{3T^2}\sum_{j,k=1}^d\int_0^T \partial_{jk}g\big(c(t)\big)\, c_{jk}(t)\, \varrho_{jk}(t) \ds t,
\end{equation*}
with $\varrho_{jk}$ being defined in assumption \ref{a.theta}.
\end{prop}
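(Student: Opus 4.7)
The plan is to extend the decomposition used in the proof of Theorem \ref{thm.multi} so that the leading-order asynchronicity bias, which becomes of the same order as the statistical fluctuation when $N\asymp \underline{n}^{4/5}$, is retained rather than absorbed into a vanishing remainder. First I will use a second-order Taylor expansion of $g$ around $c(t_h)$ with $t_h=hT/B$ to write
\[
\widehat{S}(g)^{n}_T-S(g)_T \;=\; \sum_{h=1+L}^{B-L}\sum_{j,k=1}^d \partial_{jk}g\big(c(t_h)\big)\,\big[\chat{jk}{t_h}-c_{jk}(t_h)\big]\,\frac{T}{B} \;+\; R_B^n \;+\; Q^n,
\]
where $R_B^n$ is the Riemann-sum/boundary-truncation error and $Q^n$ is the quadratic Taylor remainder. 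Under (\ref{cond.NM})--(\ref{cond.BL}) with $\alpha>1/2$, the arguments in the proof of Theorem \ref{thm.multi} give $R_B^n+Q^n=o_\mathbb{P}(\underline{n}^{-2/5})$, so only the linearized term matters.

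Second, I will insert the decomposition (\ref{Fhat-F.decomp}) into the inversion (\ref{def.chat}) to split $\chat{jk}{t}-c_{jk}(t)$ into a drift piece from $R(0)$, a Ces\`aro/Fej\'er approximation piece, an asynchronicity piece $\mathcal{A}^n_{jk}(t)$ coming from $R(1)$, and a statistical martingale piece $\mathcal{S}^n_{jk}(t)$ coming from $R(2)$. With $N=\lfloor\kappa\underline{n}^{4/5}\rfloor$ and the tuning conditions of (\ref{cond.NM}), the drift and Ces\`aro pieces aggregate to $o_\mathbb{P}(\underline{n}^{-2/5})$ by the same estimates invoked in Theorem \ref{thm.multi}. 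The martingale piece $\mathcal{S}^n_{jk}$ is identical to the object driving the stable CLT there, so that argument can be rerun verbatim to yield
\[
\underline{n}^{2/5}\sum_{h,j,k}\partial_{jk}g(c(t_h))\,\mathcal{S}^n_{jk}(t_h)\frac{T}{B}\;\overset{\mathcal{L}-s}{\longrightarrow}\;\mathcal{MN}\big(0,V(g)_T\big).
\]

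The principal new step is to identify the limit of the asynchronicity piece $\mathcal{A}^n_{jk}$. The idea is to expand the scaled Dirichlet kernel $\frac{1}{2N+1}D^N((\tau^j_h-\tau^k_v)/T)$, which emerges from averaging the phase factor $e^{-i2\pi s(\tau^j_h-\tau^k_v)/T}$ over $|s|\le N$ in $R(1)^{n,N}_{jk,q}$, about the origin to extract the quadratic correction $-\frac{2\pi^2 N^2}{3T^2}(\tau^j_h-\tau^k_v)^2$ relative to the continuous-time benchmark; the linear correction vanishes by the symmetry $s\mapsto -s$. Using the It\^o isometry $\E[\delta^j_h(M_j)\,\delta^k_v(M_k)]=\int_{I^j_h\cap I^k_v}c_{jk}(u)\ds u$ and summing over intersecting grid cells shows that the resulting bias in $\Fc{jk}{q}$ matches, to leading order, the $q$-th Fourier coefficient of $-\frac{2\pi^2 N^2}{3T^2}\,c_{jk}(\cdot)\,(\thb{j}{\cdot}-\thb{k}{\cdot})^2$. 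The Ces\`aro/Fej\'er reconstruction (\ref{def.chat}) then yields this function pointwise, and pairing it with $\partial_{jk}g(c(t_h))\cdot T/B$ and invoking assumption \ref{a.theta} to pass from $\underline{n}^2\int_0^t(\thb{j}{u}-\thb{k}{u})^2\ds u$ to $\int_0^t\varrho_{jk}(u)\ds u$ produces the deterministic limit $\mu(g)_T$; the overall scaling combines $N^2$ from the Dirichlet expansion, the $\underline{n}^{-2}$ normalization built into $\varrho_{jk}$, and the outer factor $\underline{n}^{2/5}$ to give the stated prefactor $-2\pi^2\kappa^{5/2}/(3T^2)$.

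The conclusion then follows because stable convergence of the martingale piece combines with convergence in probability of the bias piece to an $\mathcal{F}$-measurable limit to give the joint mixed-normal statement with non-centered mean. I expect the asynchronicity-bias identification to be the main obstacle: one must separate the diagonal contribution $\tau^j_h=\tau^k_v$ (harmless and already inside the statistical piece) from the off-diagonal contribution carrying $(\tau^j_h-\tau^k_v)^2$; control uniformity of the Dirichlet-kernel expansion over $|q|\le M$ and $|s|\le N$ subject to the sharp constraint $N\asymp \underline{n}^{4/5}$; show that higher-order terms in the expansion of $D^N$ contribute only $o(\underline{n}^{-2/5})$ after Ces\`aro/Fej\'er reconstruction; and, finally, match the aggregated off-diagonal bias with the cubic variation $\varrho_{jk}$ from assumption \ref{a.theta} with the correct prefactor.
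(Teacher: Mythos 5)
Your proposal captures the paper's central idea exactly: linearize $g$, discard the Riemann-sum and quadratic Taylor remainders, rerun the martingale stable CLT of theorem \ref{thm.multi} unchanged for the variance $V(g)_T$, and obtain the bias by Taylor-expanding the scaled Dirichlet kernel of the time gap about the origin — the paper's equation (\ref{d-1}), $\dd{jk}{t,t}-1=-\frac{\pi^2}{6}(2N+1)^2[\thb{j}{t}-\thb{k}{t}]^2+O_p(N^4\Delta(n)^4)$ — and then invoking assumption \ref{a.theta} to convert $\underline{n}^2\int_0^t[\thb{j}{u}-\thb{k}{u}]^2\ds u$ into $\int_0^t\varrho_{jk}(u)\ds u$. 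This is precisely how the paper obtains (\ref{limit.o(1)}), so in substance you are following the same route.

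The one place where your organization is weaker than the paper's is the identification of the asynchronicity piece through $R(1)^{n,N}_{jk,q}$ and the It\^o isometry. Computing $\E[\delta^j_h(M_j)\,\delta^k_v(M_k)]$ identifies only the \emph{mean} of $R(1)$, but the limit $\mu(g)_T$ is an $\F$-measurable random variable (it involves the random path $c$), so for a mixed-normal statement you need the bias term to converge \emph{in probability}, jointly with the stable convergence of the martingale part — and you must separately show that the fluctuation of $R(1)$ around its compensator is $o_p(\underline{n}^{-2/5})$ after aggregation, which a naive bound ($\E|R(1)|\le KN\Delta(n)$ times the normalization gives $\underline{n}^{1/5}\to\infty$) does not deliver. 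The paper sidesteps this entirely by an exact pathwise It\^o decomposition (\ref{rep.Fc}): the bias appears deterministically (given the times and the volatility path) inside $\Phi^{n,N}_{q,jk}=\int_0^T\ee{q}{\thb{j}{t}}\dd{jk}{t,t}\,c_{jk}(t)\ds t$, while all martingale fluctuation is already carried by $\Xi(0)$ and $\Xi(1)$, which are exactly the terms feeding the CLT. You should adopt that split rather than the $R(0)/R(1)/R(2)$ split of section \ref{sec:consis.spec}. Finally, check your constant bookkeeping: normalizing by $\underline{n}^{2/5}$ rather than $N^{1/2}=\kappa^{1/2}\underline{n}^{2/5}$ changes the prefactor by $\kappa^{-1/2}$ (your own accounting of $N^2\cdot\underline{n}^{-2}\cdot\underline{n}^{2/5}$ actually yields $\kappa^2$, not $\kappa^{5/2}$); the paper's proof works throughout with the $N^{1/2}$ normalization, for which $\frac{\pi^2}{6}(2N+1)^2N^{1/2}\underline{n}^{-2}\to\frac{2\pi^2}{3}\kappa^{5/2}$ gives the stated $\mu(g)_T$.
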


The bias in the second order can be estimated by
\begin{equation*}
	\widehat{\mu}(g)_T = -\frac{2\pi^2\kappa^{5/2}}{3T^2}\sum_{j,k=1}^d\sum_{h=1}^B \partial_{jk}g\big(\chat{}{t_h}\big)\, \chat{jk}{t_h}\, \big[P^n_{jk}(t_h) - P^n_{jk}(t_{h-1})\big].
\end{equation*}

In the asynchronous scenario, if $N=\lfloor\underline{n}/2\rfloor-M+1$, the functional estimator (\ref{def.Shat}) generally is no longer consistent. However, there is still an asymptotic result with optimal convergence rate and a new limit. Before state this result, we need an additional assumption on the almost everywhere convergence of the Dirichlet kernel of time gaps.
\begin{Assump}{D}[Dirichlet kernels of time]\label{a.d}
	$\forall j,k=1,\cdots,d$, $\exists$ an integrable function $r_{jk}$, such that $\forall t\in[0,T]$, as $n\to\infty$
	\begin{equation*}
	\int_0^t d^{n,\lfloor\underline{n}/2\rfloor}_{jk}\big(u,\thb{j}{u}\big)\ds u \overset{\mathbb{P}}{\longrightarrow} \int_0^t r_{jk}(u)\ds u.
	\end{equation*}
\end{Assump}

Define
\begin{eqnarray*}
	\underline{c}^{n,N}(t) &\coloneqq& \big[\dd{jk}{t,t}\,c_{jk}(t)\big]_{jk} \\
	\underline{S}(g)^{n,N}_T &\coloneqq& \int_0^T g\big(\underline{c}^{n,N}(t)\big)\ds t,
\end{eqnarray*}
when $N=\lfloor\underline{n}/2\rfloor-M+1$, under other conditions, $\widehat{S}(g)^n_T - \underline{S}(g)^{n,N}_T$ converges rate-optimally to a mixed normal distribution.
\begin{prop}\label{prop.sqrtn}
Assume (\ref{def.X}), (\ref{cond.g}), assumption \ref{a.T}, \ref{a.dd}, \ref{a.d}, \ref{a.U}, \ref{a.V} with $\alpha>1/2$, $c(0)=c(T)$, and choose the tunning parameters according to (\ref{cond.NM}), (\ref{cond.BL}) with $N=\lfloor \underline{n}/2\rfloor-M+1$, we have
\begin{equation*}
	\underline{n}^{1/2}\big[\widehat{S}(g)^{n}_T - \underline{S}(g)^{n,N}_T\big] \overset{\mathcal{L}-s}{\longrightarrow} \mathcal{MN}\big(0,\underline{V}(g)_T\big),
\end{equation*}
where
\begin{multline*}
	\underline{V}(g)_T = 2\sum_{j,k,l,m=1}^d\int_0^T \partial_{jk} g\big(r\circ c(t)\big)\,\partial_{lm} g\big(r\circ c(t)\big) \times \\ 
	\Big\{\big[\tilde{\theta}_{jk,lm}(t)+\grave{\theta}_{jk,lm}(t)\big] c_{jl}(t)\,c_{km}(t) + \big[\acute{\theta}_{jk,lm}(t)+\check{\theta}_{jk,lm}(t)\big] c_{jm}(t)\,c_{kl}(t)\Big\} \ds t,
\end{multline*}
$r(t)=[r_{jk}(t)]_{jk}$, and $r\circ c$ is the Hadamard product.
\end{prop}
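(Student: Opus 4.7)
\textbf{Proof proposal for Proposition \ref{prop.sqrtn}.}

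The plan is to piggyback on the machinery used for Theorem \ref{thm.multi} but track carefully what changes at the endpoint $N = \lfloor \underline{n}/2\rfloor-M+1$, where the spot estimator no longer targets $c(t)$ but instead the aliased object $\underline{c}^{n,N}(t) = [d^{n,N}_{jk}(t,t)\,c_{jk}(t)]_{jk}$. First I would decompose, with $t_h = hT/B$,
\begin{align*}
	\widehat{S}(g)^n_T - \underline{S}(g)^{n,N}_T
	&= \underbrace{\sum_{h=1+L}^{B-L} \bigl[g(\chat{}{t_h}) - g(\underline{c}^{n,N}(t_h))\bigr]\tfrac{T}{B}}_{\text{(I)}} \\
	&\quad + \underbrace{\Bigl(\sum_{h=1+L}^{B-L} g(\underline{c}^{n,N}(t_h))\tfrac{T}{B} - \int_0^T g(\underline{c}^{n,N}(u))\ds u\Bigr)}_{\text{(II)}}.
\end{align*}
For (II), the standard Riemann-sum-vs-integral argument, together with $B/N^{1/2}\to\infty$, the boundary condition $c(0)=c(T)$, the $L\asymp B/M$ boundary trimming from (\ref{cond.BL}), and the H\"older smoothness from Assumption \ref{a.V}, gives (II)$=o_P(\underline{n}^{-1/2})$.

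For (I), a second-order Taylor expansion of $g$ around $\underline{c}^{n,N}(t_h)$ gives
\[
	\text{(I)} = \sum_{j,k}\sum_h \partial_{jk}g\bigl(\underline{c}^{n,N}(t_h)\bigr)\bigl[\chat{jk}{t_h} - \underline{c}^{n,N}_{jk}(t_h)\bigr]\tfrac{T}{B} + R_n,
\]
and the quadratic remainder $R_n$ is controlled by $O_P(N^{-1})=o_P(\underline{n}^{-1/2})$ using a Proposition \ref{prop.msr}-style $L^2$ bound on $\chat{jk}{t}-\underline{c}^{n,N}_{jk}(t)$ on the compact $\s^\epsilon_m$ and the polynomial-growth bound (\ref{cond.g}) on $\partial^2 g$, together with a standard localization by the stopping times of Assumption \ref{a.U}. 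By Assumption \ref{a.d} and the continuity of $\partial g$, $\partial_{jk} g(\underline{c}^{n,N}(t_h))$ may be replaced in the limit by $\partial_{jk} g(r\circ c(t_h))$, which yields the arguments in the displayed $\underline{V}(g)_T$.

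The heart of the proof is then the stable CLT for the linear functional
\[
	\underline{n}^{1/2}\sum_{j,k}\int_0^T \partial_{jk}g\bigl(r(u)\circ c(u)\bigr)\bigl[\chat{jk}{u} - d^{n,N}_{jk}(u,u)\,c_{jk}(u)\bigr]\ds u.
\]
Using the representation of $\widehat F(\mathrm dX_j)^n_s$ via $M=\int\sigma\ds W$ (the drift contribution is negligible by Remark \ref{remk.Fc.error}) and the Fourier--Fej\'er inversion (\ref{def.chat}), the bracket above can be written as a double stochastic integral of a kernel $K^{n,N}_{jk}(u,v_1,v_2)$ against $\ds W_j \otimes \ds W_k$ plus a deterministic approximation error of order $M^{-\alpha}$, which is $o_P(\underline{n}^{-1/2})$ under (\ref{cond.NM}) and $\alpha>1/2$. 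The stochastic part is a discrete-time martingale after conditioning on $\F_{\underline{n}^{-1}}$-increments of time, and the Jacod-Protter stable CLT for discretized stochastic integrals (cf.\ \cite{jp98}) applies once one computes (i) the predictable quadratic variation and (ii) the predictable covariation with the underlying Brownian motion (which vanishes by the integrand being a product of distinct Brownian increments or by parity, giving the mixed normal structure).

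The quadratic variation calculation is where the four objects $\tilde\theta,\acute\theta,\check\theta,\grave\theta$ from Assumption \ref{a.dd} emerge, via Riemann-summation of products $N\int_0^u d^{n,N}_{jk}(u,v)\,d^{n,N}_{lm}(u,v)\ds v$, etc. The \emph{factor of $2$} in $\underline{V}(g)_T$ relative to $V(g)_T$ arises because at the maximal choice $N\asymp\underline{n}$, the ``self-convolution'' diagonal $s=0$ in the Bohr sum (\ref{def.Fhat}) no longer dominates --- both halves of the convolution range contribute symmetrically, doubling the limiting contribution of each $\theta$-kernel. The \emph{Hadamard product} $r\circ c$ in the arguments of $\partial_{jk} g$ is simply inherited from the pointwise limit $\underline{c}^{n,N}(t)\to r(t)\circ c(t)$ guaranteed by Assumption \ref{a.d}.

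The main obstacles are: (a) justifying the factor-of-$2$ rigorously by accounting for the near-Nyquist, near-endpoint contributions in the Bohr convolution that were negligible in Theorem \ref{thm.multi} but are now leading-order; (b) sharpening the Riemann-sum approximation in (II) to $o_P(\underline{n}^{-1/2})$ rather than merely $o_P(N^{-1/2})$ in the earlier theorems, which requires a careful use of the Fej\'er summation and the boundary condition $c(0)=c(T)$ to avoid an $L/B$ boundary bias; and (c) verifying the Jacod-Protter conditions for the martingale CLT on the \emph{biased} target $\underline{c}^{n,N}$, where the usual calculation must be redone with $d^{n,N}_{jk}$-weighted kernels rather than the ``clean'' kernels of the synchronous case.
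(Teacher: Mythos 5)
Your overall architecture (Riemann-sum error, second-order Taylor remainder, stable CLT for the linear term via predictable brackets) is the same as the paper's, which decomposes $N^{1/2}[\widehat{S}(g)^n_T-\underline{S}(g)^{n,N}_T]$ into $\overline{S}(0)^{n,N,M,B}_T+\overline{\underline{S}}(1)^{n,N,M}_T+\overline{\underline{S}}(2)^{n,N,M}_T$ and then reruns the Theorem \ref{thm.multi} machinery with $\partial_{jk}g(c(t))$ replaced by $\partial_{jk}g(\underline{c}^{n,N}(t))$ and the target $c_{jk}(t)$ replaced by $\dd{jk}{t,t}\,c_{jk}(t)$. However, your explanation of the factor of $2$ is wrong, and it matters because you flag "justifying the factor of $2$ by accounting for near-Nyquist contributions in the Bohr convolution" as your main obstacle (a). There is no such contribution: the predictable bracket of $\sum_{j,k}[\underline{e}(0)^{n,N,M}_{jk}+\underline{e}(1)^{n,N,M}_{jk}]$ is computed exactly as in the proof of Theorem \ref{thm.multi}, and Assumption \ref{a.dd} (assumed for this same $N$) delivers the \emph{same} limits $\tilde{\theta},\acute{\theta},\check{\theta},\grave{\theta}$ with no doubling. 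The factor of $2$ is pure renormalization: the CLT naturally comes out at rate $N^{1/2}$ with variance $\sum\partial_{jk}g(r\circ c)\,\partial_{lm}g(r\circ c)\{\cdots\}$ (no $2$), and since $N=\lfloor\underline{n}/2\rfloor-M+1$ with $M=o(N^{1/2})$ gives $\underline{n}/N\to 2$, restating the result at rate $\underline{n}^{1/2}$ multiplies the asymptotic variance by $2$. (The same bookkeeping is what turns $V(g)_T$ into $V(g)^*_T$ in Corollary \ref{corol.syn}.) If you pursued your plan of hunting for a doubled contribution inside the quadratic variation you would not find one, and the proof would stall.

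A second, smaller gap: you justify replacing $\partial_{jk}g(\underline{c}^{n,N}(t_h))$ by $\partial_{jk}g(r\circ c(t_h))$ via "the pointwise limit $\underline{c}^{n,N}(t)\to r(t)\circ c(t)$ guaranteed by Assumption \ref{a.d}." Assumption \ref{a.d} only asserts convergence of the \emph{integrals} $\int_0^t d^{n,\lfloor\underline{n}/2\rfloor}_{jk}(u,\thb{j}{u})\ds u$; when $N\asymp\underline{n}$ the function $t\mapsto\dd{jk}{t,t}$ oscillates on the scale of the sampling mesh and does not converge pointwise, so $\underline{c}^{n,N}(t)$ has no pointwise limit. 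Passing from $\partial_{jk}g(\underline{c}^{n,N})$ to $\partial_{jk}g(r\circ c)$ inside the time integral therefore needs an averaging argument (the nonlinearity of $\partial g$ does not commute with the weak-type convergence in Assumption \ref{a.d}), not the pointwise substitution you invoke. The remaining components of your plan --- the $o_P(\underline{n}^{-1/2})$ control of the Riemann-sum and Taylor-remainder terms via the Proposition \ref{prop.msr}-style bound on $\chat{jk}{t}-\underline{c}^{n,N}_{jk}(t)$, the negligibility of the drift, and the vanishing of $\langle\,\cdot\,,W_r\rangle_T$ (which the paper obtains from the $L^p$ bound of Lemma \ref{lem.dp} rather than a parity argument) --- are consistent with the paper's route.
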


\begin{remk}
	The biases in proposition \ref{prop.noncenter} and proposition \ref{prop.sqrtn} arise from asynchronicity. Note $\Fdx{j}{s}$ defined in (\ref{def.FShat}) can be regarded as a wave function, the multiplication term $\Fdx{j}{q-s}\times\Fdx{k}{s}$ in the spectrum estimator (\ref{def.Fhat}) can be interpreted as a ``superposition'' of two waves. When the observation times of the $j$-th and $k$-th components are asynchronous, the waves $\Fdx{j}{q-s}$ and $\Fdx{k}{s}$ are out of phase. This results in the scaled and shifted Dirichlet kernel $\dd{jk}{t,t}$ and is the source of asynchronicity biases.
\end{remk}

In view of (\ref{rep.Dirichlet}),
\begin{equation*}
\dd{jk}{t,t} = \left\{
\begin{array}{cl}
\frac{\sin\big((2N+1)\pi[\thb{j}{t}-\thb{k}{t}]/T\big)}{(2N+1)\sin\big(\pi[\thb{j}{t}-\thb{k}{t}]/T\big)} & \text{if } \thb{j}{t}-\thb{k}{t} \ne 0, \\
1 & \text{if } \thb{j}{t}-\thb{k}{t} = 0,
\end{array}\right.
\end{equation*}
so we have the following:
\begin{itemize}
	\item $\dd{jj}{t,t}=1,\, \forall j=1,\cdots,d$;
	\item if the $j$-th and $k$-th components are observed synchronously, $\dd{jk}{t,t}=1$;
	\item if the $j$-th and $k$-th components are observed asynchronously but $N$ is chosen in a way such that $N\Delta(n)\to0$, $\dd{jk}{t,t}\overset{\mathbb{P}}{\to}1$.
\end{itemize}
In all the scenarios above, we have $r_{jk}(t)=1$ uniformly over time and $j,k=1,\cdots d$. When the observations are synchronous, i.e., assumption \ref{a.ST} holds, we also have $\underline{S}(g)^{n,N}_T=S(g)_T$. When the observations are asynchronous, however, there is not any workable approach so far to conduct bias correction for general temporal spacings.

%\section{Discussion on the issue of microstructure noise}
%\input{Noise}

\section{Monte Carlo and Empirical Study}
\subsection{Monte Carlo}
We adopt the following simulation model:
%\begin{equation*}
%\left\{\begin{array}{lcl}
%\mathrm{d}X_t &=& .03\ds t + \sqrt{c_t}\ds W_t + J^X_t\ds N^X_t\\
%\mathrm{d}c_t &=& .16^2[1.5+\sin(2\pi t/D)]
%\end{array}\right.
%\end{equation*}
%where $D=23400$ seconds which is the length of trading time in one business day. Each simulation employs $23400\times21$ data points with $\Delta_n=1s$. The results are shown in figure \ref{fig.MC}.
\begin{equation*}
\left\{\begin{array}{lcl}
	\mathrm{d}X(t) &=& .03\ds t + \sqrt{c(t)}\ds W(t) \\
	c(t) &=& \widetilde{c}(t) - [\widetilde{c}(T)-\widetilde{c}(0)]\,t/T \\
	\mathrm{d}\widetilde{c}(t) &=& 6(.16-\widetilde{c}(t))\ds t + .5\sqrt{\widetilde{c}(t)}\ds B(t),
\end{array}\right.
\end{equation*}
where $\E[(W_{t+\Delta}-W_t)(B_{t+\Delta}-B_t)]=-.6\Delta$. Each simulation employs $23400\times21$ data points with $\Delta(n)=1s$. 

In the first simulation experiment, we simulate synchronous observations and compute estimators for functionals $g(c)=c^2$, $g(c)=c^{-1}$, $g(c)=\log(c)$ based on the realized variance and the Fourier methods. For all these three functionals, the tunning parameters are $N=\lfloor\underline{n}^{.75}\rfloor$, $M=\lfloor\underline{n}^{.3}\rfloor$, and $k_n=\lfloor\Delta_n^{-.45}\rfloor$ (cf. (3.6) in \cite{jr13}).
The empirical densities of studentized estimators are shown in figure \ref{fig.MC}.
\begin{figure}
	\centering
	\caption{Volatility functional estimators based on Realized Variances \& the Fourier method}\label{fig.MC}
	\includegraphics[width=.5\textwidth]{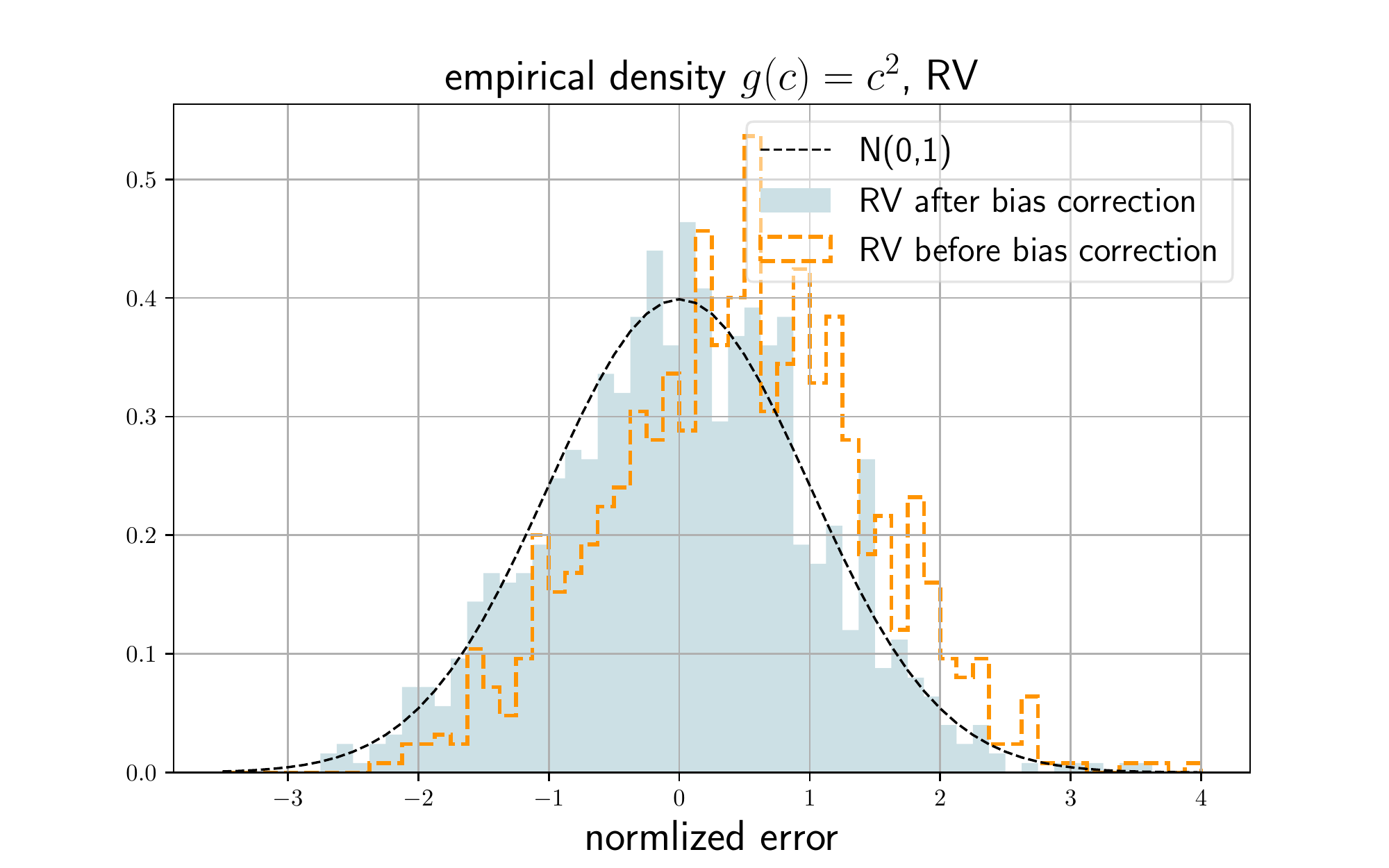}\includegraphics[width=.5\textwidth]{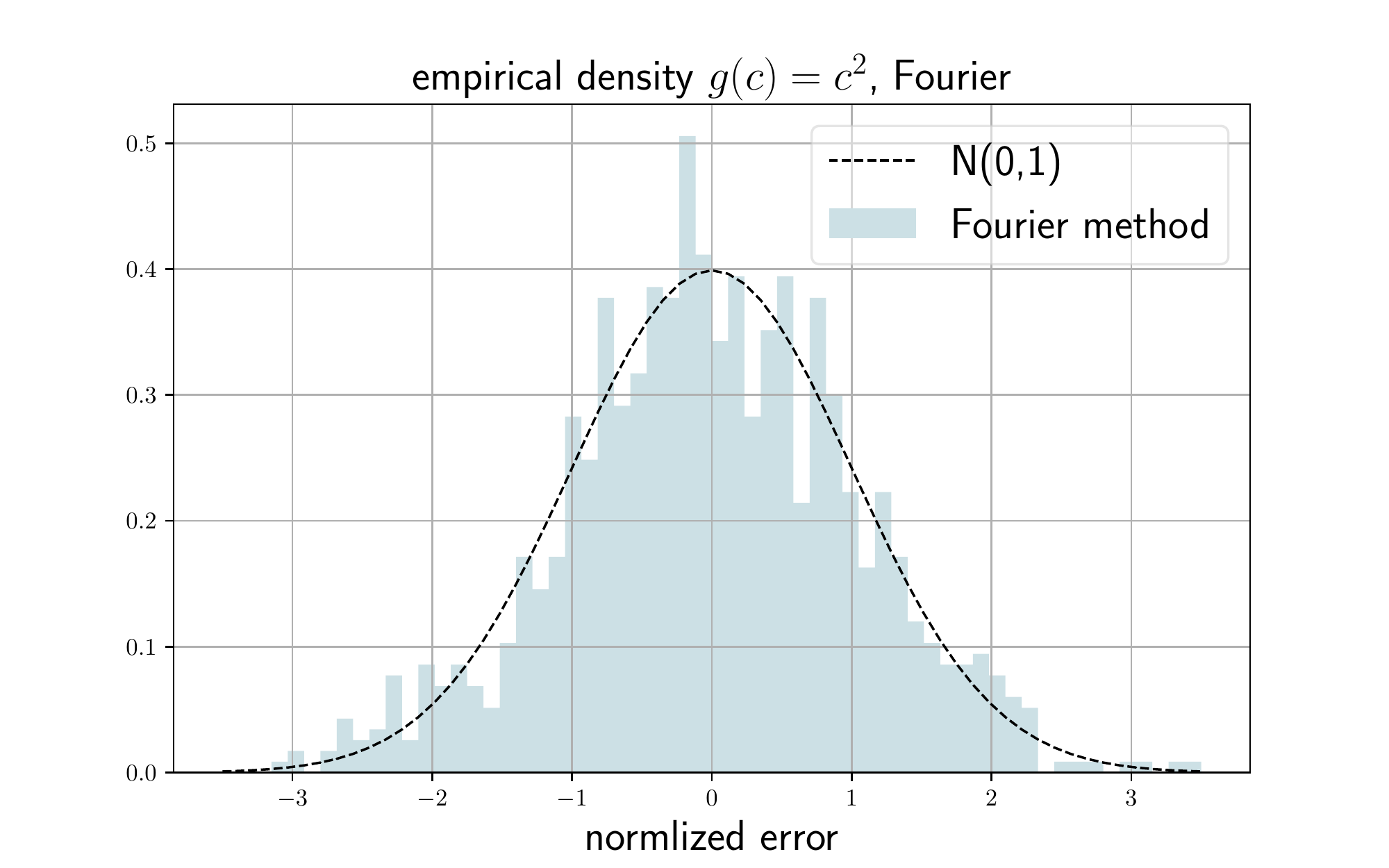}
	\includegraphics[width=.5\textwidth]{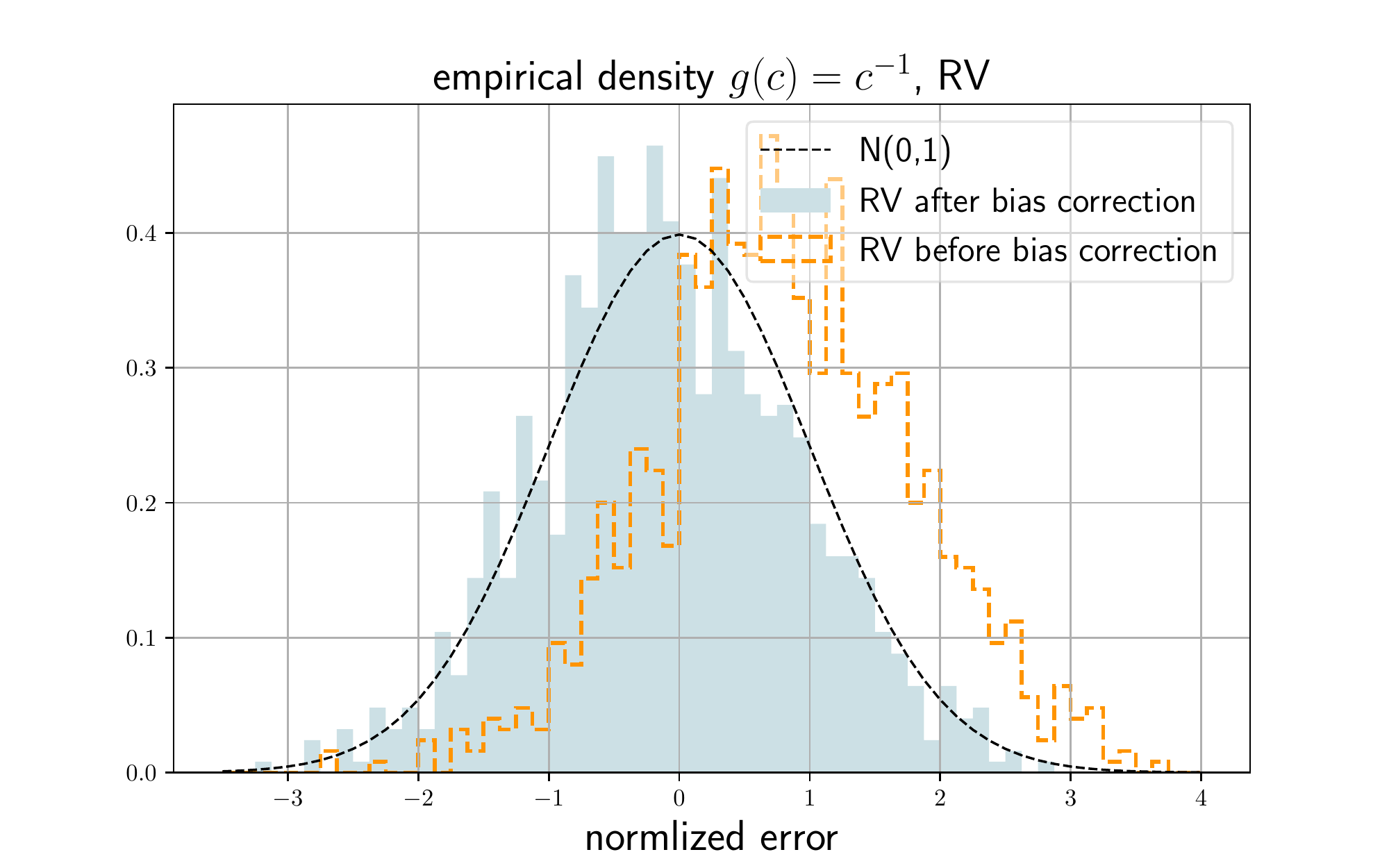}\includegraphics[width=.5\textwidth]{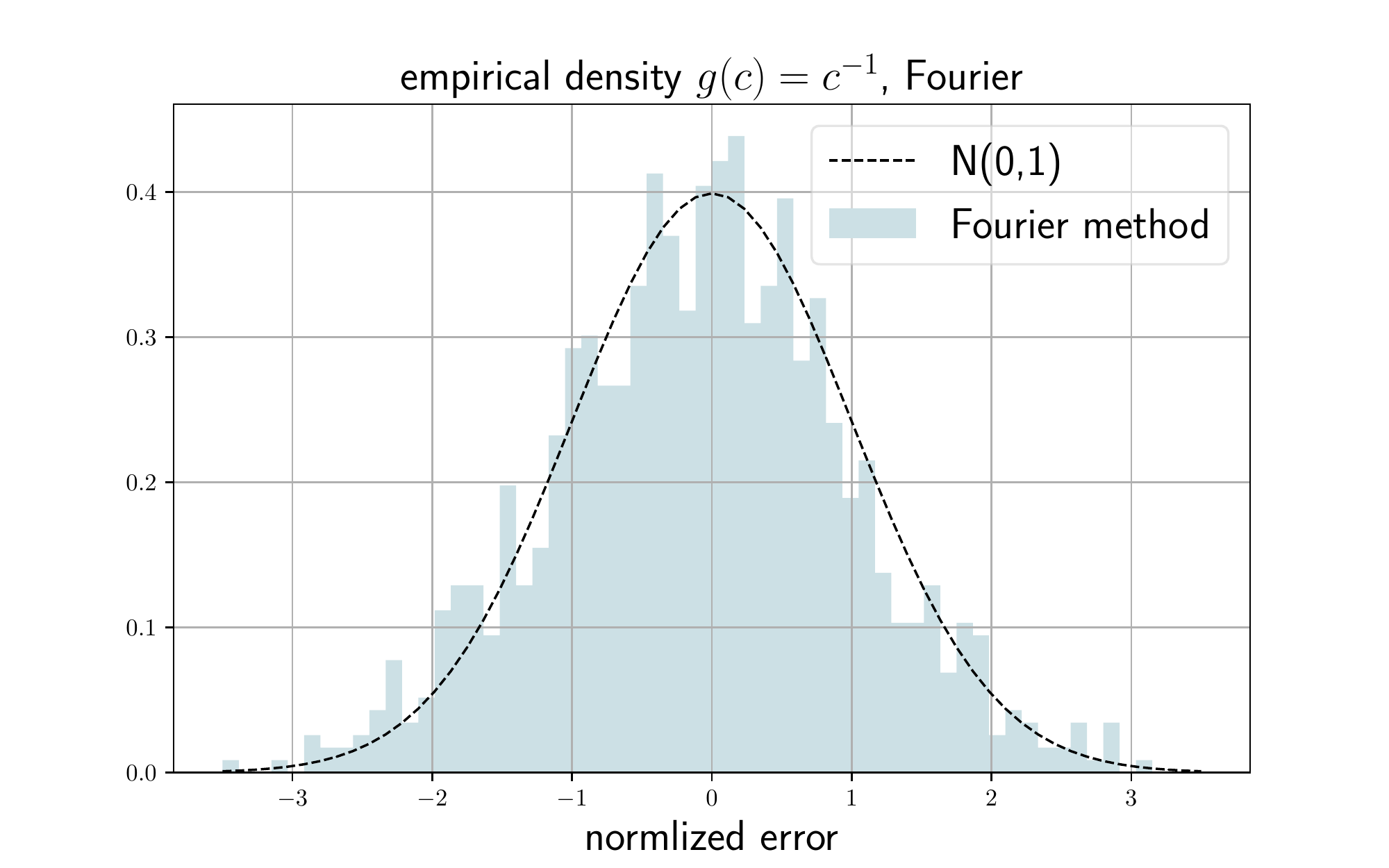}
	\includegraphics[width=.5\textwidth]{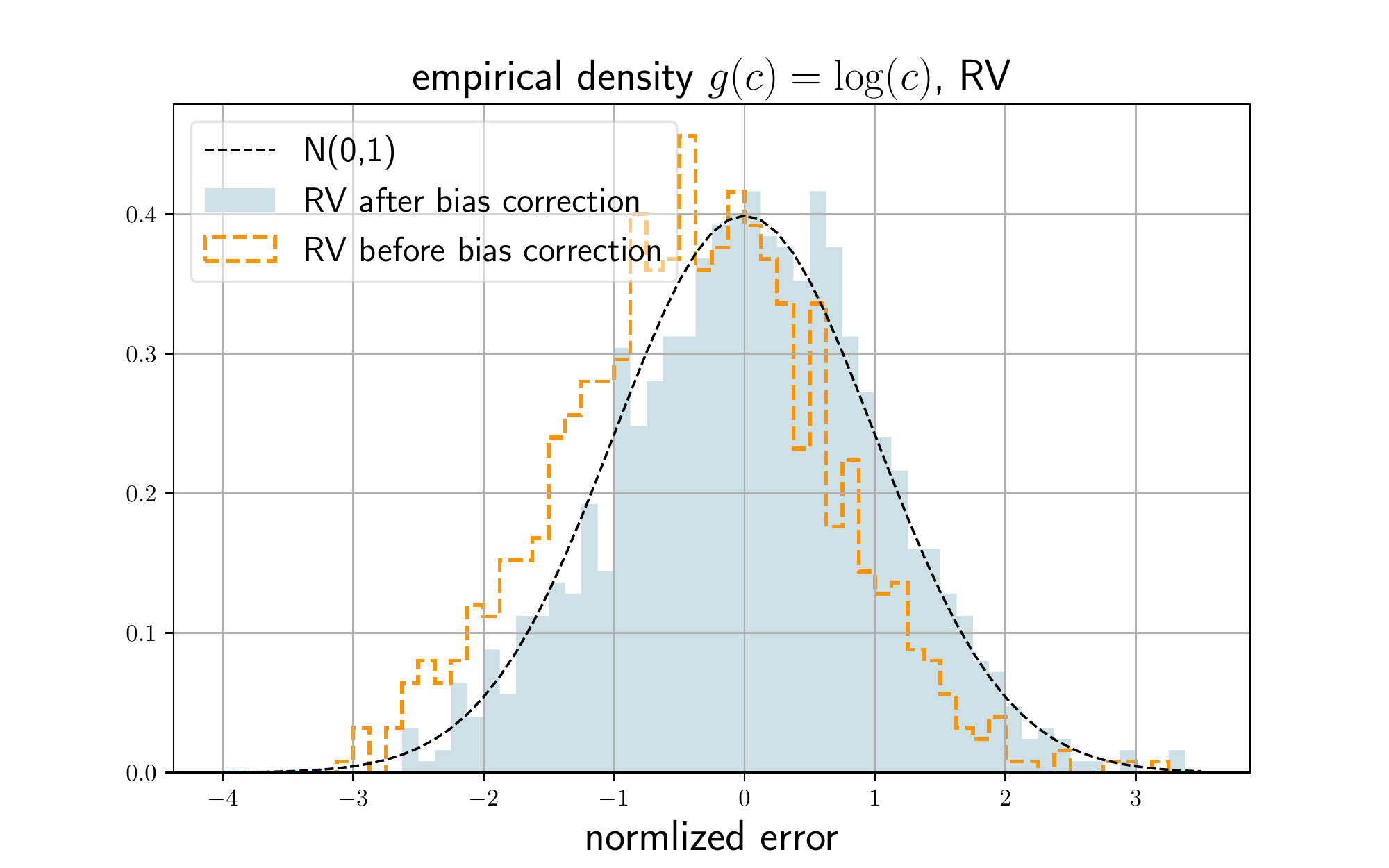}\includegraphics[width=.5\textwidth]{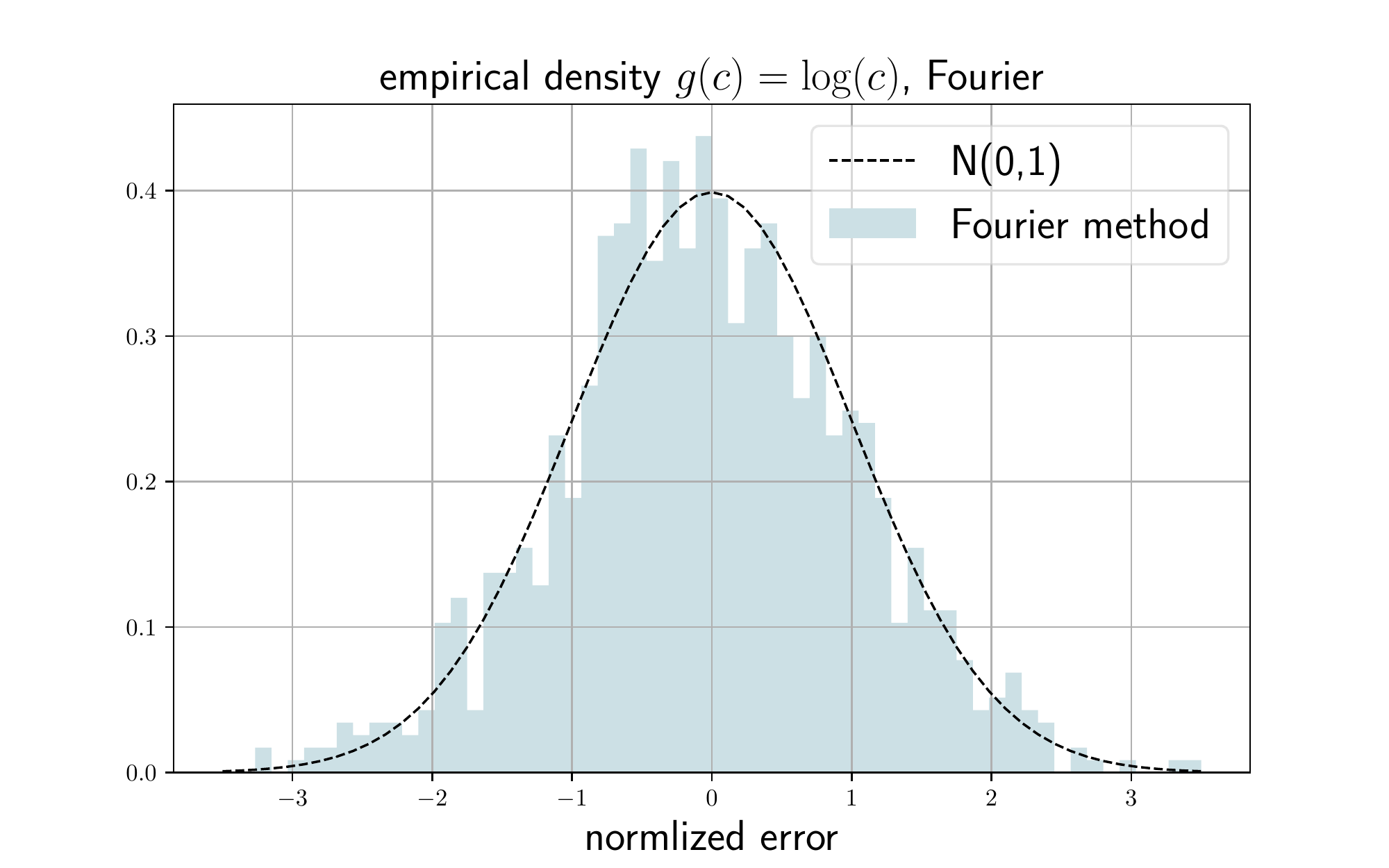}
\end{figure}

%\subsection{Empirical study}

\section{Concluding Remark}
Using observations that are of high-frequency and asynchronous, this paper studies the inference problem of volatility functionals based on spot volatility plug-ins. The nonparametric method to estimate spot volatility is based on harmonic analysis. One one hand, the Fourier transform method is numerically more stable for spot volatility estimation than the finite differences of realized variances. On the other hand, more significantly, the frequency-domain operations circumvents the need for time alignment or data imputation.

We first show the consistency of the volatility spectrum estimator through the lens of drift effect, asynchronicity effect and statistical error due to finite Bohr convolution; we then showed the mean-square rate and consistency of the spot volatility estimator based on the result of volatility spectrum estimation and a delta sequence. To establish an inferential theory, we provide the asymptotic distributional results for functionals of one element in the volatility matrix and the functionals of whole volatility matrix. Interestingly, the results reveal how the asynchronicity as a form of noise impacts the convergence rate and the asymptotic variance of the volatility functional estimators.

This paper offers an elegant framework to cope with asynchronous observations and missing data of multiple time series. For the applications of volatility functionals, such as principal component analysis, specification tests, linear regression, now we have a methodological framework with solid statistical guarantees to utilize the more prevailing high-frequency datasets that were asynchronously observed.

\newpage
\appendix
\section{Trigonometric polynomials}\label{apdx:trig}
Now we formally introduce some properties of trigonometric polynomials useful in Fourier analysis.

Based on Dirichlet kernels (\ref{def.Dirichlet}), we define Fej\'er kernel of order $M$ as
\begin{equation}\label{def.Fejer}
    F^M(x) = \frac{1}{M}\sum_{q=0}^{M-1} D^q(x).
\end{equation}
%we have
%\begin{equation*}%\label{def.Fejer}
%	F^M(x) = \sum_{|q|<M}\Big(1-\frac{|q|}{M}\Big)e^{i2\pi qx}.
%\end{equation*}
Note for $x\notin\mathbb{N}$,
\begin{multline*}
    MF^M(x) = \sum_{q=0}^{M-1}\frac{\sin[\pi(2q+1)x]}{\sin(\pi x)} = \frac{1}{\sin(\pi x)}\mathrm{Im}\Big(\sum_{q=0}^{M-1}e^{i2\pi(q+1/2)x}\Big) \\
    = \frac{1}{\sin(\pi x)}\mathrm{Im}\Big(\frac{e^{i2\pi Mx}-1}{e^{i\pi x}-e^{-i\pi x}}\Big) = \frac{1-\cos(2\pi Mx)}{2\sin(\pi x)^2},
\end{multline*}
hence we have
\begin{equation*}
    F^M(x) = \left\{
\begin{array}{cc}
    \frac{\sin(\pi Mx)^2}{M\sin(\pi x)^2}, & x\notin\mathbb{N} \\
    M, & x\in\mathbb{N},
\end{array}\right.
\end{equation*}
and by (\ref{rep.Dirichlet})
\begin{equation}\label{rep.Fejer}
    F^{2M+1}(x) = \frac{1}{2M+1}D^M(x)^2.
\end{equation}

According to (\ref{def.Dirichlet}) and (\ref{def.Fejer}), $\forall M\in\mathbb{N}^+$, 
and an interval $I$ with $|I|=T$,
\begin{equation}\label{Fejer.integral}
	\int_I F^M(x/T)\ds x = \frac{1}{M}\sum_{q=0}^{M-1}\int_I\sum_{|s|\le q}\ei{s}{x}\ds x = \frac{1}{M}\sum_{q=0}^{M-1}\int_I\ds x = T.
\end{equation}

Furthermore, we have the following result,
\begin{equation}\label{Jackson}
	\frac{1}{M}\int_0^T F^M\Big(\frac{t}{T}\Big)^2\ds t \to \frac{2T}{3},
\end{equation}
cf. remark 5.2 in \cite{ct15}.

What are so interesting about Dirichlet kernels and Fej\'er kernel in Fourier analysis? Given a function $f$ on $[0,T]$, define its \textit{truncated Fourier inversion} as $\widebar{f}^q(x) = T^{-1}\sum_{|s|\le q}F(f)_s\,e^{i2\pi sx/T}$. One can express $\widebar{f}^q$ as the convolution between $f$ and the Dirichlet kernel of order $q$,
\begin{equation*}
	\widebar{f}^q(x) = \frac{1}{T}\int_0^T f(u)\sum_{|s|\le q}\ei{s}{(x-u)}\ds u = \frac{1}{T}\int_0^T f(u)D^q[(x-u)/T]\ds u.
\end{equation*}
Recall $\widehat{f}$ defined in (\ref{def.fhat}), we have, by (\ref{def.Fejer}), 
\begin{equation}\label{rep.fhat}
	\widehat{f}^M(t) = \frac{1}{TM}\sum_{q=0}^{M-1}\sum_{|s|\le q}F(f)_s\,\ei{s}{t} = \frac{1}{M}\sum_{q=0}^{M-1}\widebar{f}^q(t) = \frac{1}{T}\int_0^T f(u)F^M[(t-u)/T]\ds u.
\end{equation}

\begin{lem}\label{lem.Fejer.uc}
If the function $f$ is continuous, then
\begin{eqnarray*}
	\sup_{t\in[0,T]}\left|\frac{1}{T}\int_0^TF^M\Big(\frac{t-u}{T}\Big)\,f(u)\ds u - f(t)\right| \le K\omega_f(1/M), && \text{if } f(0)=f(T); \\
	\sup_{t\in[1/M,T-1/M]}\left|\frac{1}{T}\int_0^TF^M\Big(\frac{t-u}{T}\Big)\,f(u)\ds u - f(t)\right| \le K\omega_f(1/M), && \text{if } f(0)\ne f(T).
\end{eqnarray*}
\end{lem}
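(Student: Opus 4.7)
The plan is to prove Lemma \ref{lem.Fejer.uc} via the classical Fej\'er-kernel approximation argument, splitting the convolution into a peak contribution and a tail. By (\ref{Fejer.integral}) and the $T$-periodicity of $F^M(\cdot/T)$, one has $\frac{1}{T}\int_0^T F^M((t-u)/T)\ds u = 1$ for every $t\in[0,T]$, so the error rewrites as the kernel-weighted integral of the increment $f(u)-f(t)$. The remaining task is to control how $F^M$ weighs these increments.

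For the periodic case ($f(0)=f(T)$), I would extend $f$ to a $T$-periodic function on $\R$, change variables to $v = t-u$, and use joint periodicity of $F^M(\cdot/T)$ and $f$ to recenter the integration on $[-T/2,T/2]$. The integral then splits naturally at the threshold $|v|=1/M$. On the core region $|v|\le 1/M$, $|f(t-v)-f(t)|\le \omega_f(1/M)$ combined with the kernel's unit mass produces the desired control. On the tail $|v|>1/M$, the standard pointwise bound $F^M(v/T) \le T^2/(4Mv^2)$ (which follows from $|\sin(\pi x)|\ge 2|x|$ on $[-1/2,1/2]$) combined with a dyadic decomposition $|v|\in[2^k/M,2^{k+1}/M]$ and the subadditivity $\omega_f(\lambda\delta)\le (1+\lambda)\omega_f(\delta)$ converts each block into a geometric-series contribution, which sums to $O(\omega_f(1/M))$ under the H\"older-type scaling of the modulus available through assumption \ref{a.V}.

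The non-periodic case with $t\in[1/M, T-1/M]$ is the main obstacle. Without periodic extension, the substitution $v=t-u$ produces integration over the asymmetric window $[t-T,t]$, and the $T$-periodicity of $F^M(\cdot/T)$ creates ``virtual peaks'' at $v=t$ and $v=t-T$, the integer multiples of $T$ remaining in the window. The exclusion $t\in[1/M,T-1/M]$ is precisely what keeps both virtual peaks at distance $\ge 1/M$ from the integration endpoints, so $F^M$ is bounded there by $T^2M/4$ rather than by the true peak value $M$. The hardest step is estimating the contribution from $u$ near the opposite boundary, where the increment $|f(u)-f(t)|$ can be as large as $|f(T)-f(0)|$, not $\omega_f(1/M)$. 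To bound this contribution by $O(\omega_f(1/M))$, I would exploit both the quadratic decay of $F^M$ and the zero structure of $\sin(\pi Mv/T)^2$ at $v/T=k/M$, via a refined dyadic analysis that leverages the H\"older regularity of $f$ from assumption \ref{a.V}.
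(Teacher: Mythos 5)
Your treatment of the periodic case is essentially the paper's own proof: use the unit mass of the kernel to rewrite the error as $\frac{1}{T}\int_0^T F^M\big(\frac{t-u}{T}\big)\,[f(u)-f(t)]\ds u$, extend $f$ periodically, recenter on $[-T/2,T/2]$, and split at $|v|=1/M$; your dyadic tail sum is just an unrolled version of the paper's direct estimate $\frac{K}{M}\int_{1/M\le|u|\le T/2}|u|^{\alpha-2}\ds u\le K\big[\omega_f(M^{-1})+M^{-1}\big]$, and both arguments lean in the same way on the H\"older bound $\omega_f(\delta)\le\delta^\alpha$ from assumption \ref{a.V} (plain continuity would only give an extra $\log M$ factor). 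No issue there.

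The gap is in the non-periodic case, precisely at the step you flag as hardest and then defer to a ``refined dyadic analysis'' exploiting ``the zero structure of $\sin(\pi Mv/T)^2$''. That tool cannot close the step: the Fej\'er kernel is nonnegative and $f(u)-f(t)$ is essentially the constant $f(T)-f(t)$ for $u$ near the far boundary, so no cancellation is available; the relevant quantity is the \emph{integrated} kernel mass near the boundary, which is of exact order $\frac{1}{M\,(t\wedge(T-t))}$, not the pointwise bound $T^2M/4$ you invoke (which is anyway of the same order as the peak value $M$, since $T$ is fixed). Concretely, take $T=1$, $f(u)=u$, $t=1/M$: using evenness and $1$-periodicity of $F^M$ one computes $\big|\int_0^1 F^M(t-u)[f(u)-f(t)]\ds u\big|=\int_{1/M}^{1/2}F^M(w)\ds w$, which converges to a strictly positive constant as $M\to\infty$, whereas $K\omega_f(1/M)=K/M\to0$. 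So the uniform bound over all of $[1/M,T-1/M]$ is not attainable by any argument; what the three-piece split $[0,t-1/M]\cup[t-1/M,t+1/M]\cup[t+1/M,T]$ (the paper's route, which it leaves as ``a similar argument'') actually yields is $K\omega_f(1/M)+\frac{K\|f\|_\infty}{M}\big(\frac1t+\frac1{T-t}\big)$, which matches the stated conclusion only for $t$ bounded away from $0$ and $T$, or after widening the excluded bands from $1/M$ to $\asymp\big(M\,\omega_f(1/M)\big)^{-1}$. You correctly located the weak point, but the proposed repair does not work; you should either restrict the range of $t$ or carry the extra boundary term explicitly.
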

\begin{proof}	
By (\ref{Fejer.integral}),
\begin{equation*}
	\delta^M(t) \coloneqq T\left|\frac{1}{T}\int_0^TF^M\Big(\frac{t-u}{T}\Big)\,f(u)\ds u - f(t)\right| 
	= \left|\int_0^T F^M\Big(\frac{t-u}{T}\Big) \big[f(t)-f(u)\big]\ds u\right|.
\end{equation*}
	
\textbf{(1)} If $f(0)=f(T)$, by periodization, we can extend the definition of $f$ to the real line and retain its modulus of continuity. By a change of variable and the periodicity of $F^M$ and $f$,
\begin{multline*}
	\delta^M(t) = \left|\int_{t-T}^t F^M(u/T) \big[f(t)-f(t-u)\big]\ds u\right| \le \int_{-T/2}^{T/2} F^M(u/T)\cdot|f(t)-f(t-u)|\ds u\\ 
	= \Big(\int_{-1/M}^{1/M} + \int_{1/M\le|u|\le T/2}\Big) F^M(u/T)\cdot|f(t)-f(t-u)|\ds u.
\end{multline*}
since $F^M(x) \le \frac{M}{1+M^2x^2},\, x\in[-1/2,1/2]$,
\begin{multline*}
	\int_{1/M\le|u|\le T/2} F^M(u/T)\cdot|f(t)-f(t-u)|\ds u
	\le \frac{K}{M}	\int_{1/M\le|u|\le T/2} |u|^{\alpha-2}\ds u \le K\big[\omega_f(M^{-1}) + M^{-1}\big].
\end{multline*}
moreover,
\begin{equation*}
	\int_{-1/M}^{1/M} F^M(u/T)\cdot|f(t)-f(t-u)|\ds u \le K\omega_f(M^{-1})\int_{-1/M}^{1/M} F^M(u/T)\ds u \le K\omega_f(M^{-1}),
\end{equation*}
then this lemma in the case $f(0)=f(T)$ is proved.
	
\textbf{(2)} If $f(0)\ne f(T)$, then for $t\in[1/M,T-1/M]$,
\begin{equation*}
	\delta^M(t) \le \Big(\int_0^{t-1/M} + \int_{t-1/M}^{t+1/M} + \int_{t+1/M}^T\Big) F^M\Big(\frac{t-u}{T}\Big)\cdot|f(t)-f(u)|\ds u,
\end{equation*}
then by a similar argument, the lemma in the case $f(0)\ne f(T)$ can also be proved.
\end{proof}

The following lemma is similar to Lemma 5.1 in \cite{ct15}. The proof emulates that of \cite{ct15}, but we generalize the result to irregular observations, replace the time period $2\pi$ with $T$, and change the conclusion for our own need.
\begin{lem}\label{lem.Fejer.Riemann}
If $M=o(n)$, for $\thb{j}{t}$ defined in (\ref{def.theta_n}), $\forall f\in C([0,T])$, $\forall t\in[0,T]$, 
\begin{equation*}
	\left|\int_0^T F^M\Big(\frac{t-\thb{j}{u}}{T}\Big)\,f(u)\ds u - \int_0^T F^M\Big(\frac{t-u}{T}\Big)\,f(u)\ds u\right| \le KT \frac{M}{n}.
\end{equation*}
\end{lem}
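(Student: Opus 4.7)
The plan is to exploit the piecewise-constant structure of $\thb{j}{\cdot}$, reduce the difference to a Riemann-sum error of the form $\Delta(n)$ times the total variation of the Fejér kernel, and then prove the sharp $L^1$ bound $\|(F^M)'\|_{L^1([0,1])} = O(M)$.

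First, on each observation interval $I^j_h=(\tau^j_{h-1},\tau^j_h]$ the map $u\mapsto \thb{j}{u}$ equals the constant $\tau^j_h$. Writing $\phi(u)\coloneqq F^M\bigl((t-u)/T\bigr)$, the left-hand integral splits as $\sum_h \phi(\tau^j_h)\int_{I^j_h} f(u)\ds u$ while the right-hand integral splits as $\sum_h \int_{I^j_h}\phi(u)f(u)\ds u$. By the fundamental theorem of calculus,
\[
\phi(\tau^j_h)-\phi(u) \;=\; \int_u^{\tau^j_h}\phi'(v)\ds v \;=\; -\frac{1}{T}\int_u^{\tau^j_h}(F^M)'\Bigl(\frac{t-v}{T}\Bigr)\ds v.
\]
After inserting this into the difference and applying Fubini on the triangle $\tau^j_{h-1}\le u\le v\le\tau^j_h$, the inner $u$-integration produces a factor of at most $\Delta(n)$, yielding
\[
\Bigl|\text{difference}\Bigr| \;\le\; \frac{\|f\|_\infty\,\Delta(n)}{T}\int_0^T\Bigl|(F^M)'\Bigl(\tfrac{t-v}{T}\Bigr)\Bigr|\ds v.
\]
The substitution $w=(t-v)/T$ together with the $1$-periodicity of $(F^M)'$ reduces the remaining integral to $T\,\|(F^M)'\|_{L^1([0,1])}$, leaving the estimate
\[
\Bigl|\text{difference}\Bigr| \;\le\; \|f\|_\infty\,\Delta(n)\,\|(F^M)'\|_{L^1([0,1])}.
\]

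Next I would establish the sharp bound $\|(F^M)'\|_{L^1([0,1])} \le KM$. By evenness and periodicity, it suffices to work on $[0,1/2]$, which I split into the central region $[0,1/M]$ and the tail $[1/M,1/2]$. In the central region, a Taylor expansion of $F^M(x)=\sin^2(\pi Mx)/\bigl(M\sin^2(\pi x)\bigr)$ around $x=0$ gives $(F^M)'(x)=-\tfrac{2\pi^2 M^3}{3}x+O(M^3 x^3)$, so $\int_0^{1/M}|(F^M)'(x)|\ds x \le KM$. In the tail, direct differentiation of the closed-form representation yields
\[
|(F^M)'(x)| \;\le\; \frac{K}{x^2}+\frac{K}{M x^3}, \qquad x\in[1/M,1/2],
\]
and both contributions integrate to $O(M)$ over $[1/M,1/2]$. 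Adding the two regions gives the claimed $L^1$ bound.

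Finally, Assumption \ref{a.T} (together with $\sum_h \Delta^j_h = T$) forces $\Delta(n)\le KT/n$, so combining with the $L^1$ bound above yields $|\text{difference}|\le KT\,M/n$, as required. The main obstacle is the sharp $L^1$ estimate on $(F^M)'$: a Bernstein-type bound $\|(F^M)'\|_\infty\le KM^2$ combined with Cauchy-Schwarz would only give $O(M^{3/2})$, which is off by a factor $\sqrt{M}$. The refined estimate exploits the fact that the Fejér kernel is concentrated on a window of width $1/M$, so its derivative, although pointwise of size $M^2$ near the origin, has sharply smaller total variation $O(M)$ after tail decay is accounted for.
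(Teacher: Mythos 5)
Your proof is correct and reaches the stated bound, but by a genuinely different route from the paper's. Both arguments ultimately rest on an $O(M)$ control of the total variation of the Fej\'er kernel. You get there directly: the fundamental theorem of calculus plus Fubini on each observation interval reduces the error to $\|f\|_\infty\,\Delta(n)\,\|(F^M)'\|_{L^1([0,1])}$, and you then prove $\|(F^M)'\|_{L^1}\le KM$ from the closed form, splitting at $|x|=1/M$ and using the tail bound $|(F^M)'(x)|\le K x^{-2}+K M^{-1}x^{-3}$. The paper instead partitions $[0,T]$ into $\asymp M$ blocks $J_b$ of length $T/M$, observes that each block contains $O(n/M)$ observation times, and uses block-wise comparisons $\sup_{J_b}|\partial F^M|\le KM\sup_{J_b}F^M\le KM^2\int_{J_b}F^M$ before summing with $\int_0^T F^M(x/T)\,\mathrm{d}x=T$ --- morally the same $L^1$ estimate, obtained indirectly. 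Your version is more self-contained, since it avoids the sup-versus-average comparison of $F^M$ over blocks of width $1/M$, which requires separate justification for trigonometric polynomials (the pointwise inequality $|\partial F^M|\le KMF^M$ fails near the interior zeros of $F^M$, so the block formulation is essential there). Two small points: the Taylor remainder in your central region should be $O(M^5x^3)$ rather than $O(M^3x^3)$ --- this does not affect the conclusion, and indeed $\int_0^{1/M}|(F^M)'|\le KM$ already follows from the crude Bernstein bound $\|(F^M)'\|_\infty\le KM^2$, making the expansion unnecessary; and your final step $\Delta(n)\le KT/n$ is right but worth one more line, namely $\Delta(n)\le K\min_j\min_h\Delta^j_h\le K(\tau^j_{n_j}-\tau^j_0)/n_j\le KT/n_j$ for every $j$, hence $\Delta(n)\le KT/n$ with $n=\max_j n_j$.
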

\begin{proof}
Denote the L.H.S. by $D(t)^{n,M}_{j,T}$, note
\begin{multline*}
	D(t)^{n,M}_{j,T}\le K\int_0^T \Big|F^M\Big(\frac{t-\thb{j}{u}}{T}\Big) - F^M\Big(\frac{t-u}{T}\Big)\Big|\ds u \\
	= K\sum_{h=1}^{n_j}\int_{I^j_h} \Big|F^M\Big(\frac{t-\ta{j}{h}}{T}\Big) - F^M\Big(\frac{t-u}{T}\Big)\Big|\ds u = K\sum_{h=1}^{n_j}\Delta^j_h \Big|F^M\Big(\frac{t-\ta{j}{h}}{T}\Big) - F^M\Big(\frac{t-u^j_h}{T}\Big)\Big|,
\end{multline*}
where $u^j_h\in I^j_h$ for each $h$ by mean value theorem.
	
Let $J_b=\Big(\big(t+\frac{bT}{M}\big)\vee0,\big(t+\frac{(b+1)T}{M}\big)\wedge T\Big]$, $B_0=\inf\big\{b,t+\frac{(b+1)T}{M}>0\big\}$, $B_1=\sup\big\{b,t+\frac{bT}{M}<T\big\}$, then
	\[ D(t)^{n,M}_{j,T} \le K\sum_{b=B_0}^{B_1}\sum_{\ta{j}{h}\in J_b}\Delta^j_h \Big|F^M\Big(\frac{t-\ta{j}{h}}{T}\Big) - F^M\Big(\frac{t-u^j_h}{T}\Big)\Big|. \]
Based on mean value theorem, $\exists v^j_h\in[u^j_h,\ta{j}{h}]$ for each $h$ such that $F^M\Big(\frac{t-\ta{j}{h}}{T}\Big) - F^M\Big(\frac{t-u^j_h}{T}\Big) = (\ta{j}{h}-v^j_h)\,\partial F^M\Big(\frac{t-v^j_h}{T}\Big)$, so
\begin{multline*}
	\sum_{\ta{j}{h}\in J_b}\Delta^j_h \Big|F^M\Big(\frac{t-\ta{j}{h}}{T}\Big) - F^M\Big(\frac{t-u^j_h}{T}\Big)\Big| \le \Delta(n)^2 \sum_{\ta{j}{h}\in J_b}\Big|\partial F^M\Big(\frac{t-v^j_h}{T}\Big)\Big| \\
	\le K \frac{n\Delta(n)^2}{M} \sup_{v\in J_b}\Big|\partial F^M\Big(\frac{t-v}{T}\Big)\Big|.
\end{multline*}
	
Based on (\ref{rep.Fejer}),
	\[ \sup_{v\in J_b}\Big|\partial F^M\Big(\frac{t-v}{T}\Big)\Big| \le KM\sup_{v\in J_b}F^M\Big(\frac{t-v}{T}\Big) \le \widetilde{K}M^2\int_{J_b} F^M\Big(\frac{t-u}{T}\Big)\ds u, \]
thus we have
\begin{equation*}
	D(t)^{n,M}_{j,T} \le K Mn\Delta(n)^2 \sum_{b=B_0}^{B_1}\int_{J_b} F^M\Big(\frac{t-u}{T}\Big)\ds u \asymp KT\frac{M}{n},
\end{equation*}
from which this lemma follows.	
\end{proof}

The following lemma is a modified adaptation of Lemma 3 in \cite{cg11}.	We generalize it to time period $T$ and a more general $N$. Its purpose is to investigate the $L^p$ norm of the shifted and scaled Dirichlet kernel. 
\begin{lem}\label{lem.dp}
Assume assumption \ref{a.T}, we have for $p>1$, $N\le\lfloor (n_j\land n_k)/2\rfloor$, $\exists K_p<\infty$,  
\begin{equation*}
	\sup_{j,k}\sup_{t\in[0,T]} \int_0^T \big|\dd{jk}{t,u}\big|^p\ds u \le K_pN^{-1}
\end{equation*}
\end{lem}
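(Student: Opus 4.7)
The plan is to reduce the integral to a finite Riemann sum using the piecewise-constant structure of $u\mapsto\dd{jk}{t,u}$, and then to dominate that sum via a shell decomposition driven by the standard Dirichlet kernel bound. The key inputs are Assumption~\ref{a.T} (quasi-uniform spacing), the constraint $N\le\lfloor\underline{n}/2\rfloor$, and the summability of $\sum_m m^{-p}$ for $p>1$.

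The first step is to observe that on each $I^k_h=(\ta{k}{h-1},\ta{k}{h}]$ the step function $\thb{k}{u}$ equals $\ta{k}{h}$, so $u\mapsto\dd{jk}{t,u}$ is constant on $I^k_h$. Fixing $t$ and writing $\thb{j}{t}=\ta{j}{h^\star}$ for the relevant index $h^\star$, the integral reduces to
\[
\int_0^T\bigl|\dd{jk}{t,u}\bigr|^p\ds u=\sum_{h=1}^{n_k}\frac{\Delta^k_h}{(2N+1)^p}\,\bigl|D^N\bigl((\ta{j}{h^\star}-\ta{k}{h})/T\bigr)\bigr|^p.
\]
Next I would invoke the classical estimate $|D^N(x)|\le\min\bigl(2N+1,\,(2\,d(x,\mathbb{Z}))^{-1}\bigr)$, where $d(x,\mathbb{Z})$ is the distance to the nearest integer; writing $y_h:=d((\ta{j}{h^\star}-\ta{k}{h})/T,\mathbb{Z})$, the $h$-th summand is at most $\Delta^k_h\cdot\min\bigl(1,\,[2(2N+1)y_h]^{-p}\bigr)$.

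I would then partition the index set into shells $S_0=\{h:y_h\le 1/(2N+1)\}$ and $S_m=\{h:m/(2N+1)<y_h\le(m+1)/(2N+1)\}$ for $m\ge 1$. Assumption~\ref{a.T} forces $\min_h\Delta^k_h\ge\Delta(n)/K$ and hence the quasi-uniformity $\underline{n}\,\Delta(n)\asymp T$; combined with $N\le\lfloor\underline{n}/2\rfloor$ this yields $\Delta(n)\lesssim T/(2N+1)$. Since each $S_m$ corresponds to $\ta{k}{h}$ lying in a finite union of arcs in $[0,T]$ of total length $O(T/(2N+1))$, the counting estimate $\sum_{h\in S_m}\Delta^k_h\le K'T/(2N+1)$ holds uniformly in $m$.

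Finally, the contribution to the sum from $S_0$ is at most $K'T/(2N+1)$, and that from each $S_m$ with $m\ge 1$ is at most $K'T/[(2N+1)(2m)^p]$, so the total is bounded by
\[
\frac{K'T}{2N+1}\Bigl(1+\sum_{m\ge 1}\frac{1}{(2m)^p}\Bigr)\le\frac{K_p}{N},
\]
the series converging precisely because $p>1$; all bounds are uniform in $t$ and in $j,k$. The main technical point is the shell-counting step, where one must carefully use the $1$-periodicity of $D^N$ (the argument $(\ta{j}{h^\star}-\ta{k}{h})/T$ lies in $[-1,1]$, so distance must be measured to the nearest integer, not merely to zero) together with the quasi-uniform spacing from Assumption~\ref{a.T}; beyond that the argument is essentially the discrete Riemann-sum analogue of the classical $L^p$-norm bound $\|D^N\|_p\asymp N^{1-1/p}$.
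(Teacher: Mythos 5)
Your proof is correct and follows essentially the same route as the paper's: both rest on the pointwise Dirichlet bound $|D^N(x)|\le(2N+1)\wedge K/d(x,\mathbb{Z})$, a split into a near region (trivial bound, measure $O(T/N)$) and a far region (decay bound, summable tail since $p>1$), with Assumption \ref{a.T} and $N\le\lfloor\underline{n}/2\rfloor$ controlling the discretization. The only cosmetic difference is that you discretize the integral into a sum over the intervals $I^k_h$ and organize the far region into shells, whereas the paper keeps the integral and evaluates the tail by a change of variables as $\int_1^\infty x^{-p}\ds x$; your explicit use of the distance to the nearest integer makes the periodicity issue (arguments near $\pm1$) cleaner than the paper's appeal to "the shape of the Dirichlet kernel."
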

\begin{proof}
By the definition (\ref{def.dirichlet.scale}) and the shape of the Dirichlet kernel, it suffices to study
	\[ \sup_j\sup_{a\in[0,T]} \int_{(a-T/2)\vee0}^{(a+T/2)\wedge T} \Big|\frac{1}{2N+1}D^N\Big(\frac{\thb{j}{t}-a}{T}\Big)\Big|^p\ds t \le K_pN^{-1}. \]
Note that
\begin{equation*}
	\Big|\frac{1}{2N+1}D^N(x/T)\Big| \le 1\land\frac{2T}{(2N+1)|x|},
\end{equation*}
and $\forall a\in[0,T]$, $\forall j=1\cdots,d$, $|t-a|>|\thb{j}{t}-a|-|\thb{j}{t}-t|$, thus
\begin{multline*}
	\int_{(a-T/2)\vee0}^{(a+T/2)\wedge T} \Big|\frac{1}{2N+1}D^N\Big(\frac{\thb{j}{t}-a}{T}\Big)\Big|^p\ds t \\
	\le \Big(\int_{(a-T/2)\vee0}^{\big[a-\frac{2T}{2N+1}-\Delta(n)\big]\vee0}+\int_{\big[a+\frac{2T}{2N+1}+\Delta(n)\big]\wedge T}^{(a+T/2)\wedge T}\Big) \Big|\frac{1}{2N+1}D^N\Big(\frac{\thb{j}{t}-a}{T}\Big)\Big|^p\ds t + \frac{4T}{2N+1}+2\Delta(n) \\
	\le \Big(\int_{(a-T/2)\vee0}^{\big[a-\frac{2T}{2N+1}-\Delta(n)\big]\vee0}+\int_{\big[a+\frac{2T}{2N+1}+\Delta(n)\big]\wedge T}^{(a+T/2)\wedge T}\Big) \Big|\frac{2T}{(2N+1)(t-a)}\Big|^p\ds t + \frac{4T}{2N+1}+2\Delta(n),
\end{multline*}
via a change of variable,
\begin{equation*}
	\Big(\int_{(a-T/2)\vee0}^{\big[a-\frac{2T}{2N+1}-\Delta(n)\big]\vee0}+\int_{\big[a+\frac{2T}{2N+1}+\Delta(n)\big]\wedge T}^{(a+T/2)\wedge T}\Big) \Big|\frac{2T}{(2N+1)(t-a)}\Big|^p\ds t \le \frac{4T}{2N+1}\int_1^\infty x^{-p}\ds x,
\end{equation*}
thereby this lemma is proved.
\end{proof}

For $j,k,l,m=1\cdots,d$, by Fubini's theorem and H\"older's inequality,
\begin{multline*}
	N^2\int_0^T\int_0^T\ds t\ds u\, \Big[\int_0^{t\wedge u} \dd{jk}{t,v}\,\dd{jk}{u,v}\ds v \int_0^v \dd{lm}{v,\vartheta}^2\ds \vartheta\Big] \\
	\le N^2\int_0^T\ds v\, \Big[\int_{v}^T\dd{jk}{t,v}\ds t\, \int_v^T\dd{jk}{u,v}\ds u \int_0^v \dd{lm}{v,\vartheta}^2\ds \vartheta \Big]\\
	\le N^2\int_0^T\ds v\, \Big(\int_{v}^T\big|\dd{jk}{t,v}\big|\ds t\Big)^2 \Big(\int_0^v \big|\dd{lm}{v,\vartheta}\big|^2\ds \vartheta\Big) \\
	\le T^{\frac{3p-2}{p}} N\Big(\int_0^T\big|\dd{jk}{t,v}\big|^p\ds t\Big)^{2/p} \Big(N\int_0^T\big|\dd{lm}{v,\vartheta}\big|^2\ds \vartheta\Big).
\end{multline*}
according to lemma \ref{lem.dp}, we have for $p>1$,
\begin{multline}\label{bdd.dd}
	\sup_{j,k,l,m} N^2\int_0^T\int_0^T\ds t\ds u\, \Big[\int_0^{t\wedge u} \dd{jk}{t,v}\,\dd{jk}{u,v}\ds v \int_0^v \dd{lm}{v,\vartheta}^2\ds \vartheta\Big] 
	\le T^{\frac{3p-2}{p}} N^{1-2/p}
\end{multline}

The following lemma follows from assumption \ref{a.dd} and is a straightforward generalization of Lemma 4 in \cite{cg11} from the bivariate setting to the multivariate setting.
\begin{lem}\label{lem.dd}
Under assumption \ref{a.T}, \ref{a.dd}, then $\forall f_0,f_1\in C([0,T])$, we have $\forall t\in[0,T]$,
\begin{eqnarray*}
	\int_0^t f_0(u)\ds u\, N\int_0^u \dd{jk}{u,v}\,\dd{lm}{u,v}\, f_1(v)\ds v &\overset{\mathbb{P}}{\longrightarrow}& \int_0^t \tilde{\theta}_{jk,lm}(u)\, f_0(u)f_1(u) \ds u \\
	\int_0^t f_0(u)\ds u\, N\int_0^u \dd{jk}{u,v}\,\dd{lm}{v,u}\, f_1(v)\ds v &\overset{\mathbb{P}}{\longrightarrow}& \int_0^t \acute{\theta}_{jk,lm}(u)\, f_0(u)f_1(u) \ds u \\
	\int_0^t f_0(u)\ds u\, N\int_0^u \dd{jk}{v,u}\,\dd{lm}{u,v}\, f_1(v)\ds v &\overset{\mathbb{P}}{\longrightarrow}& \int_0^t \check{\theta}_{jk,lm}(u)\, f_0(u)f_1(u) \ds u \\
	\int_0^t f_0(u)\ds u\, N\int_0^u \dd{jk}{v,u}\,\dd{lm}{v,u}\, f_1(v)\ds v &\overset{\mathbb{P}}{\longrightarrow}& \int_0^t \grave{\theta}_{jk,lm}(u)\, f_0(u)f_1(u) \ds u.
\end{eqnarray*}	
\end{lem}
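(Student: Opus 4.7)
I would prove only the first convergence; the other three follow by the same argument with $d_{jk}(u,v)d_{lm}(u,v)$ replaced by $d_{jk}(u,v)d_{lm}(v,u)$, $d_{jk}(v,u)d_{lm}(u,v)$, or $d_{jk}(v,u)d_{lm}(v,u)$ and the limit $\tilde\theta_{jk,lm}$ replaced accordingly. The plan is to write the LHS as $I^n = I^n_a + I^n_b$, where
\[
I^n_a = \int_0^t f_0(u)f_1(u)\,\phi^n(u)\ds u, \qquad \phi^n(u) := N\int_0^u \dd{jk}{u,v}\,\dd{lm}{u,v}\ds v,
\]
and $I^n_b$ is the residual obtained by subtracting $f_1(u)$ from $f_1(v)$ inside the inner integral. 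Then $I^n_a$ is handled by assumption \ref{a.dd} via a step-function approximation, while $I^n_b$ vanishes by Dirichlet-kernel concentration around the diagonal $v=u$.

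For $I^n_b = \int_0^t f_0(u)\ds u\, N\int_0^u \dd{jk}{u,v}\,\dd{lm}{u,v}\,[f_1(v)-f_1(u)]\ds v$, fix $\delta>0$ and split the inner integral at $|u-v|=\delta$. On $\{|u-v|\le\delta\}$, bound $|f_1(v)-f_1(u)|\le \omega_{f_1}(\delta)$ and apply Cauchy--Schwarz with lemma \ref{lem.dp} (case $p=2$) to obtain $N\int_{(u-\delta)\vee 0}^u |\dd{jk}{u,v}\dd{lm}{u,v}|\ds v \le K$; this contributes at most $K\|f_0\|_\infty t\,\omega_{f_1}(\delta)$. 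On $\{|u-v|>\delta\}$, use the pointwise bound $|\dd{jk}{u,v}|\le K/(N|u-v|)$, valid for $|u-v|\ge 4\Delta(n)$ (which holds uniformly in $u,v$ once $n$ is large, since $\Delta(n)\to 0$), derived from $|D^N(x)|\le 1/|\sin(\pi x)|$ combined with $|\thb{j}{u}-\thb{k}{v}|\ge |u-v|/2$. This gives $N\int_0^{u-\delta}|\dd{jk}{u,v}\dd{lm}{u,v}|\ds v \le K/(N\delta)$, so the far-diagonal part contributes at most $K\|f_0\|_\infty\|f_1\|_\infty/(N\delta)\to 0$ as $n\to\infty$. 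Letting $\delta\to 0$ afterwards yields $I^n_b\xrightarrow{\mathbb{P}}0$.

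For $I^n_a$, define $G^n(s) := \int_0^s \phi^n(u)\ds u$ and $G(s) := \int_0^s \tilde\theta_{jk,lm}(u)\ds u$. Assumption \ref{a.dd} gives $G^n(s)\xrightarrow{\mathbb{P}} G(s)$ pointwise; moreover $G^n$ is nondecreasing, so $G^n(t)=O_p(1)$. For $\epsilon>0$, by uniform continuity of $f_0 f_1$ on $[0,t]$ choose a partition $0=s_0<s_1<\cdots<s_K=t$ so that $\sup_{u\in(s_{k-1},s_k]}|f_0(u)f_1(u)-c_k|<\epsilon$ with $c_k := f_0(s_k)f_1(s_k)$. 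Then
\[
I^n_a = \sum_{k=1}^K c_k\bigl[G^n(s_k)-G^n(s_{k-1})\bigr] + R^n, \qquad |R^n|\le \epsilon\, G^n(t) = \epsilon\, O_p(1).
\]
The sum converges in probability to $\sum_k c_k[G(s_k)-G(s_{k-1})]$, which lies within $\epsilon\,G(t)$ of $\int_0^t f_0(u)f_1(u)\tilde\theta_{jk,lm}(u)\ds u$. Sending $\epsilon\to 0$ yields $I^n_a\xrightarrow{\mathbb{P}}\int_0^t f_0(u)f_1(u)\tilde\theta_{jk,lm}(u)\ds u$. Combining with step (i) concludes the first convergence.

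The main technical obstacle is establishing the pointwise bound $|\dd{jk}{u,v}|\le K/(N|u-v|)$ for $|u-v|\gg\Delta(n)$. The $L^p$ bound in lemma \ref{lem.dp} alone gives $\|\phi^n\|_{L^1}=O(1)$ but no local decay, which is insufficient to control $I^n_b$; the pointwise decay, relying on $|\thb{j}{u}-\thb{k}{v}|\approx |u-v|$ for $|u-v|$ far exceeding the mesh size, is what allows the $f_1$-smoothing argument to close.
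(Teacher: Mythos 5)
The paper does not actually prove this lemma: it is stated with the one-line remark that it ``follows from assumption \ref{a.dd} and is a straightforward generalization of Lemma 4 in \cite{cg11},'' so there is no in-paper argument to compare against. Your two-step scheme --- first kill the $f_1(v)-f_1(u)$ residual by concentration of the Dirichlet-kernel product near the diagonal, then pass from the indicator-weighted convergence in assumption \ref{a.dd} to a continuous weight $f_0f_1$ by a step-function approximation --- is the natural route and is essentially the argument one would extract from \cite{cg11}. The near-diagonal bound via Cauchy--Schwarz and lemma \ref{lem.dp} with $p=2$, the far-diagonal decay, and the finite-partition limit exchange are all sound.

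Two points need repair, though both are local. First, your claim that $G^n$ is nondecreasing (hence $|R^n|\le\epsilon\,G^n(t)$) silently assumes $\phi^n\ge0$. That holds only when the integrand is a square, i.e.\ for the first convergence with $(j,k)=(l,m)$; for distinct index pairs, and for the three other convergences where the two kernels have swapped arguments, $\dd{jk}{u,v}\,\dd{lm}{v,u}$ has no sign. The fix is the bound you already state in your last paragraph: $|R^n|\le\epsilon\int_0^t|\phi^n(u)|\ds u\le\epsilon K$ by Cauchy--Schwarz and lemma \ref{lem.dp}, and likewise replace $\epsilon\,G(t)$ by $\epsilon\int_0^t|\tilde\theta_{jk,lm}|$ since $\tilde\theta_{jk,lm}$ is merely $L^1$, not necessarily nonnegative. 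Second, the pointwise bound $|\dd{jk}{u,v}|\le K/(N|u-v|)$ controls $|\sin(\pi x)|$ from below by the distance of $x=(\thb{j}{u}-\thb{k}{v})/T$ to $0$, but the Dirichlet kernel peaks at every integer, so the bound fails in the corner where $u-v$ is within $O(\Delta(n)+1/N)$ of $T$ (there $\dd{jk}{u,v}$ is of order one again). That corner has two-dimensional measure $O(N^{-2})$ up to the tails, so its contribution to $N\int\!\!\int$ is $O(N^{-1})$ and vanishes, but it must be carved out explicitly rather than covered by the stated inequality --- the same caveat applies to the paper's own proof of lemma \ref{lem.dp}, which restricts to a window of width $T$ around the central peak without comment. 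With these two patches the proof is complete.
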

%\begin{proof}
%Let's focus on the first convergence. $\forall \epsilon>0$, $|\thb{j}{u}-\thb{k}{v}|\wedge|\thb{l}{u}-\thb{m}{v}|>\epsilon$ if $|u-v|>\epsilon$ and $n$ is sufficiently large. By the property of Dirichlet kernel and (\ref{def.dirichlet.scale}), when both $n$ and $N$ are sufficiently large, 
%	\[ \int_0^{u-\epsilon}\dd{jk}{u,v}\,\dd{lm}{u,v}\ds v \le KN^{-2} \]
%so $N\int_0^{u-\epsilon}\dd{jk}{u,v}\,\dd{lm}{u,v}\ds v \overset{\mathbb{P}}{\longrightarrow} 0$. Combined with assumption \ref{a.dd}, it implies
%\begin{equation*}
%	N\int_{u-\epsilon}^u\dd{jk}{u,v}\,\dd{lm}{u,v}\ds v \overset{\mathbb{P}}{\longrightarrow} \tilde{\theta}_{jk,lm}(u).
%\end{equation*}
%Since $f_1\in C([0,T])$,
%\begin{equation*}
%	N\int_0^u\dd{jk}{u,v}\,\dd{lm}{u,v}\,f_1(v)\ds v = N\int_{u-\epsilon}^u\dd{jk}{u,v}\,\dd{lm}{u,v}\,f_1(v)\ds v + o_p(1)
%\end{equation*}
%Because $\epsilon$ can be chosen arbitrarily small and $f_1$ is continuous, the first claim is shown. The other 3 claims in this lemma follow from similar arguments.
%\end{proof}

The following lemma reveals the limiting behavior of the shifted and scaled Dirichlet kernels when the temporal spacings are synchronous (but possibly irregular) across different dimensions.
\begin{lem}\label{lem.dd.syn}
Assume $n_1=n_2=\cdots=n_d$ and $\min_j\ta{j}{h}=\max_j\ta{j}{h}$ for $h=1,\cdots, n_1$, then $\forall t\in[0,T]$ and $\forall f_0,f_1\in C([0,T])$,
\begin{eqnarray*}
	\int_0^T f_0(u)\ds u\, N\int_0^u \dd{jk}{u,v}\,\dd{lm}{u,v}\, f_1(v) \ds v &\overset{\mathbb{P}}{\longrightarrow}& \frac{T}{4}\int_0^T f_0(u)f_1(u) \ds u \\
	\int_0^T f_0(u)\ds u\, N\int_0^u \dd{jk}{u,v}\,\dd{lm}{v,u}\, f_1(v) \ds v &\overset{\mathbb{P}}{\longrightarrow}& \frac{T}{4}\int_0^T f_0(u)f_1(u) \ds u \\
	\int_0^T f_0(u)\ds u\, N\int_0^u \dd{jk}{v,u}\,\dd{lm}{u,v}\, f_1(v) \ds v &\overset{\mathbb{P}}{\longrightarrow}& \frac{T}{4}\int_0^T f_0(u)f_1(u) \ds u \\
	\int_0^T f_0(u)\ds u\, N\int_0^u \dd{jk}{v,u}\,\dd{lm}{v,u}\, f_1(v) \ds v &\overset{\mathbb{P}}{\longrightarrow}& \frac{T}{4}\int_0^T f_0(u)f_1(u) \ds u.
\end{eqnarray*}
\end{lem}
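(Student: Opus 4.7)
The plan is to exploit synchronicity to collapse all four limits into one, and then recover the constant $T/4$ from the delta-sequence behaviour of the Fej\'er kernel, exactly as in the univariate identity (\ref{theta.jjjj}) quoted in the remark following assumption \ref{a.dd}.

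\textbf{Step 1 (reduction to one limit).} Under the hypothesis, the step functions $\theta^n_j$ from (\ref{def.theta_n}) all coincide; call this common function $\theta^n$. Hence $\dd{jk}{t,u} = (2N+1)^{-1} D^N((\theta^n(t)-\theta^n(u))/T)$ for every pair $(j,k)$. Since $D^N$ is even (the defining sum over $|s|\le N$ is symmetric in $s$), $\dd{jk}{t,u} = \dd{jk}{u,t}$. All four integrands on the left-hand side of the lemma therefore equal the common value $(2N+1)^{-2} D^N((\theta^n(u)-\theta^n(v))/T)^2$, so it suffices to prove the first limit.

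\textbf{Step 2 (symmetrisation).} Applying the identity (\ref{rep.Fejer}),
\[ N\,\dd{jk}{u,v}\,\dd{lm}{u,v} = \frac{N}{2N+1}\, F^{2N+1}\bigl((\theta^n(u)-\theta^n(v))/T\bigr). \]
Since $N/(2N+1)\to 1/2$, it remains to show
\[ J_n := \int_0^T f_0(u)\int_0^u F^{2N+1}\bigl((\theta^n(u)-\theta^n(v))/T\bigr) f_1(v)\ds v\ds u \;\longrightarrow\; \tfrac{T}{2}\int_0^T f_0(u)f_1(u)\ds u. \]
Set $K_n(u,v) := F^{2N+1}((\theta^n(u)-\theta^n(v))/T)$ and let $J_n^\sharp$ denote the analogous integral with $\int_u^T$ in place of $\int_0^u$. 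Evenness of $F^{2N+1}$ and Fubini give $J_n + J_n^\sharp = \int_0^T\int_0^T K_n(u,v) f_0(u) f_1(v)\ds v\ds u$, and the same swap shows $J_n^\sharp$ equals $J_n$ with $f_0,f_1$ exchanged. The antisymmetric remainder $J_n - J_n^\sharp = \int_0^T\int_0^u K_n(u,v)[f_0(u)f_1(v) - f_0(v)f_1(u)]\ds v\ds u$ tends to $0$ because the bracket vanishes on the diagonal $v=u$ where $K_n$ concentrates, and $f_0, f_1$ are continuous. Therefore $J_n = \tfrac{1}{2}(J_n + J_n^\sharp) + o(1)$.

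\textbf{Step 3 (full double integral).} The full double integral $J_n + J_n^\sharp$ converges to $T\int_0^T f_0(u)f_1(u)\ds u$, because $F^{2N+1}(\cdot/T)$ is a summability kernel of total mass $T$ per period and $\theta^n$ satisfies $|\theta^n(u) - u|\le\Delta(n)\to 0$ under assumption \ref{a.T}; a step-function adaptation of lemma \ref{lem.Fejer.uc} (of the same flavour as lemma \ref{lem.Fejer.Riemann}) makes this rigorous. Combining Steps 2 and 3 with the prefactor $N/(2N+1)\to 1/2$ from Step 1 yields the first limit, and the other three follow from Step 1 since all four integrands coincide.

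The main obstacle is the ``plateau'' of $\theta^n$: since $F^{2N+1}(0) = 2N+1$ and the plateau has length $\Delta(n)$, its contribution to $K_n$ is of order $(2N+1)\Delta(n)$, which is $O(1)$ rather than $o(1)$ when $N\asymp n$ (the regime used in corollary \ref{corol.syn}). A careful decomposition into plateau and bulk contributions, together with the discrete Parseval-type identity $\sum_{m=0}^{n-1} F^{2N+1}(m/n) = n$ valid for $2N+1\le n$, is needed to show that these pieces always sum to the correct total mass $T\int f_0 f_1\ds u$ across all admissible scalings of $N$.
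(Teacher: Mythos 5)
Your argument follows essentially the same route as the paper's own proof: collapse all four limits to one using synchronicity and the evenness of $D^N$, rewrite $N\,\dd{jk}{u,v}\,\dd{lm}{u,v}$ as $\frac{N}{2N+1}F^{2N+1}\big(\frac{\thb{j}{u}-\thb{k}{v}}{T}\big)$ via (\ref{rep.Fejer}), replace the step functions by the identity, and invoke the delta-sequence property of the Fej\'er kernel. The one structural difference is how the factor $1/2$ coming from the half-range integral $\int_0^u$ is extracted: the paper takes the pointwise limit $\int_0^u F^{2N+1}(u-v)f_1(Tv)\ds v \to f_1(Tu)/2$ (half the kernel mass lies to the left of the diagonal), whereas you symmetrise and kill the antisymmetric remainder; both are legitimate and give $\frac{1}{2}\cdot\frac{T}{2}=\frac{T}{4}$ after the prefactor $N/(2N+1)\to 1/2$. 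The caveat in your last paragraph is the one substantive issue, and it is a real one: the paper disposes of the step-function replacement with a bare ``Riemann summation $\ldots + O_p(n^{-1})$'' claim, but the only quantitative tool in the paper, lemma \ref{lem.Fejer.Riemann}, gives an error of order $M/n$ with $M=2N+1$, which is $O(1)$ rather than $o(1)$ in the Nyquist regime $N\asymp n$ needed for corollary \ref{corol.syn}. The plateau-versus-bulk decomposition together with the exact discrete identity $\sum_{m=0}^{n-1}F^{2N+1}(m/n)=n$ for $2N<n$ that you sketch is precisely the right repair (the kernel's total discrete mass is conserved even though its pointwise shape is perturbed), so you should carry that computation out rather than leave it as a remark --- it is the only place where the lemma could fail, and as written your Step 3 is an acknowledged gap rather than a proof.
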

\begin{proof}
Because of synchronous observations, $\dd{jk}{u,v}\,\dd{lm}{u,v} = \dd{11}{u,v}^2$, then by (\ref{def.dirichlet.scale}) and (\ref{rep.Fejer}),
\begin{multline*}
	\int_0^Tf_0(u)\ds u\, N\int_0^u \dd{11}{u,v}^2\, f_1(v) \ds v = \frac{N}{2N+1}\int_0^T f_0(u)\ds u \int_0^u F^{2N+1}\Big(\frac{\thb{j}{u}-\thb{k}{v}}{T}\Big) f_1(v)\ds v,
\end{multline*}
by Riemann summation,
\begin{equation*}
	\int_0^u F^{2N+1}\Big(\frac{\thb{j}{u}-\thb{k}{v}}{T}\Big) f_1(v)\ds v = \int_0^u F^{2N+1}\Big(\frac{u-v}{T}\Big) f_1(v)\ds v + O_p(n^{-1}),
\end{equation*}
via changes of variables,
\begin{equation*}
	\int_0^T f_0(u)\ds u \int_0^u F^{2N+1}\Big(\frac{u-v}{T}\Big) f_1(v)\ds v = T^2\int_0^1 f_0(Tu)\ds u \int_0^u F^{2N+1}(u-v) f_1(Tv)\ds v,
\end{equation*}
note that $F^{2N+1}$ is a delta sequence, as $N\to\infty$, $\int_0^u F^{2N+1}(u-v) f_1(Tv)\ds v \to f_1(Tu)/2$, then this lemma follows from a change of variable.
\end{proof}

\section{Proof of proposition 1}\label{apdx:spec}
In all the proofs of this paper, $K$ represents a positive finite real number, and may vary from line to line.

By assumption \ref{a.U} and a localization argument (cf. section 4.4.1 in \cite{jp12}), without loss of generality we can assume a stronger assumption in all the following proofs:
\begin{Assump}{SU}[global boundedness]\label{a.SU}
The spot volatility matrix $c$ has continuous sample path almost surely. Moreover, there exists a finite constant $K$ and a compact subset $\s$ of positive semidefinite matrices such that
	\[ \|b(t)\|+\|c(t)\|\le K,\, c(t)\in\s,\; \forall t\in[0,T].\]
\end{Assump}

\noindent\textbf{Study of $R(1)^{n,N}$}\\
We have
\begin{equation*}
    \Fdm{j}{q} - F(\mathrm{d}M_j)_q = \int_0^T \beta^n_{j,q}(t)\ds M_j(t)
\end{equation*}
where
\begin{equation}\label{def.beta}
	\beta^n_{j,q}(t) = \sum_{h=1}^{n^j}\ee{q}{\tau^j_{h}}\big[1-\ee{q}{(t-\tau^j_{h})}\big]\mathds{1}_{I^j_h}(t).
\end{equation}
note $|\beta^n_{j,q}(t)|\le KT^{-1}q\Delta(n)$, by \textit{Burkholder-Davis-Gundy inequality},
\begin{eqnarray*}
    \E\big(|\Fdm{j}{q-s}|^4\big) &\le& KT^2 \\
    \E\big(|F(\mathrm{d}M_j)_q|^4\big) &\le& KT^2 \\
    \E\big(|\Fdm{j}{q} - F(\mathrm{d}M_j)_q|^4\big) &\le& KT^{-2}q^4\Delta(n)^4
\end{eqnarray*}
by \textit{Cauchy-Schwarz inequality},
\begin{multline*}
    \E\big(|\Fdm{j}{q-s}\Fdm{k}{s} - F(\mathrm{d}M_j)_{q-s}F(\mathrm{d}M_k)_s|^2\big)\\
    \le 2\Big[ \E\big(|\Fdm{j}{q-s}|^4\big)^{1/2}\cdot\E\big(|\Fdm{k}{s} - F(\mathrm{d}M_k)_s|^4\big)^{1/2}\\ + \E\big(|F(\mathrm{d}M_k)_s|^4\big)^{1/2}\cdot\E\big(|\Fdm{j}{q-s} - F(\mathrm{d}M_j)_{q-s}^n|^4\big)^{1/2} \Big]
\end{multline*}
so by \textit{Jensen's inequality},
\begin{equation}\label{Fhat2-F2}
    \E\Big(\big|R(1)^{n,N}_{jk,q}\big|\Big) \le K N\Delta(n)
\end{equation}

\noindent\textbf{Study of $R(2)^N$}\\
Define a $\mathbb{C}$-valued martingale $\Gamma_q^j(t) = \int_0^t \ee{q}{u}\ds M_j(u)$ for $j=1,\cdots,d$.  We see that $\Gamma_q^j(T) = F(\mathrm{d}M_j)_q$. By It\^o's formula,
\begin{equation*}
    \Gamma_{q-s}^j(T)\cdot\Gamma_s^k(T) = F(c_{jk})_q + \int_0^T\Gamma_{q-s}^j(t)\ds\Gamma_s^k(t) + \int_0^T\Gamma_s^k(t)\ds\Gamma_{q-s}^j(t)
\end{equation*}
hence
\begin{eqnarray*}
    R(2)^N_{jk,q} = \Lambda(1)^N_{jk,q} + \Lambda(2)^N_{jk,q}
\end{eqnarray*}
where 
\begin{equation*}
\begin{array}{lcl}
    \Lambda(1)^N_{jk,q} &=& \frac{1}{2N+1}\sum_{|s|\le N}\int_0^T\Gamma_{q-s}^j(t)\ds\Gamma_s^k(t) \\
    \Lambda(2)^N_{jk,q} &=& \frac{1}{2N+1}\sum_{|s|\le N}\int_0^T\Gamma_s^k(t)\ds\Gamma_{q-s}^j(t)
\end{array}
\end{equation*}
By (\ref{def.Dirichlet}), we have
\begin{eqnarray*}
    \Lambda(1)^N_{jk,q} &=& \int_0^T\sigma_{k\cdot}(t)\ds W(t) \,\int_0^t \ee{q}{u}\frac{1}{2N+1}D^N\Big(\frac{u-t}{T}\Big)\sigma_{j\cdot}(t)\ds W(u)\\
    \Lambda(2)^N_{jk,q} &=& \int_0^T\ee{q}{t}\sigma_{j\cdot}(t)\ds W(t) \,\int_0^t\frac{1}{2N+1}D^N\Big(\frac{u-t}{T}\Big)\sigma_{k\cdot}(t)\ds W(u)
\end{eqnarray*}
by \textit{It\^o isometry}\footnote{a.k.a. It\^o energy identity.}, (\ref{rep.Fejer}) and (\ref{Fejer.integral}),
\begin{multline*}
    \E\Big(|\Lambda(1)^N_{jk,q}|^2\Big) = \E\left[\int_0^T \Big(\int_0^t \ee{q}{u}\frac{1}{2N+1}D^N\Big(\frac{u-t}{T}\Big)\ds X_j(u)\Big)^2 c_{kk}(t)\ds t \right] \\
    \le \frac{K}{2N+1}\int_0^T \int_0^t F^{2N+1}\Big(\frac{u-t}{T}\Big) \ds u\ds t \asymp \frac{KT^2}{N} 
\end{multline*}
the term $\Lambda(2)^N_{jk,q}$ can be bounded by a similar argument, so
\begin{equation}\label{F2-Fc}
    \E\Big(\big|R(2)^N_{jk,q}\big|\Big) \le KTN^{-1/2}
\end{equation}

\noindent\textbf{Study of $R(0)^{n,N}$}\\
For a generic scalar process, we can write $\widehat{F}(\mathrm{d}U)^n_q=\int_0^T e^n_{j,q}(t)\ds U(t)$ where $e^n_{j,q}(t)=\sum_{h=1}^{n^j}\ee{q}{\tau^j_{h}}\mathds{1}_{I^j_h}(t)$, for $j=1,\cdots,d$.

By linearity of discrete Fourier transform, $\Fdx{j}{q} = \widehat{F}(\mathrm{d}A^j)_q^n + \Fdm{j}{q}$, so 
\begin{multline*}
    \Fdx{j}{q-s}\Fdx{k}{s} - \Fdm{j}{q-s}\Fdm{k}{s} = \\
    \Fda{j}{q-s}\Fda{k}{s} + \Fda{j}{q-s}\Fdm{k}{s} + \Fda{k}{s}\Fdm{j}{q-s}
\end{multline*}

By Parseval's identity,
%\textit{Plancherel theorem},
\begin{equation*}
    \sum_{s=-\infty}^\infty |F(\mathrm{d}A_j)_s|^2 = \int_0^T|b_j(t)|^2\ds t <\infty
\end{equation*}
note
\begin{equation*}
    \Fda{j}{q} - F(\mathrm{d}A_j)_q = \int_0^T \beta^n_{j,q}(t)\, b_j(t)\ds t
\end{equation*}
where $\beta^n_{j,q}(t)$ is defined in (\ref{def.beta}).
By Cauchy-Schwarz inequality,
\begin{multline*}
    \big|R(0)^{n,N}_{jk,q}\big| \le \frac{K}{2N+1}\int_0^T\|b(t)\|^2\ds t + \Big(\frac{K}{2N+1}\int_0^T\|b(t)\|^2\ds t\Big)^{1/2}\times\\
    \Bigg[\Big(\frac{1}{2N+1}\sum_{|s|\le N}|F(\mathrm{d}M_k)_s|^2\Big)^{1/2} + \Big(\frac{1}{2N+1}\sum_{|s|\le N}|F(\mathrm{d}M_j)_{q-s}|^2\Big)^{1/2} \Bigg]
\end{multline*}

From the study of the term $R(2)^N_{jk,q}$, we know
\begin{equation*}
    \sum_{|s|\le N}F(\mathrm{d}M_k)_s^2 = \int_0^TD^N\Big(\frac{2t}{T}\Big)\,c_{kk}(t)\ds t + 2\int_0^T\sigma_{k,\cdot}(t)\ds W(t) \int_0^tD^N\Big(\frac{t+u}{T}\Big)\,\sigma_{k,\cdot}(u)\ds W(u)
\end{equation*}
by Cauchy-Schwarz inequality, (\ref{rep.Fejer}), (\ref{Fejer.integral})
\begin{equation*}
    \frac{1}{2N+1}\int_0^TD^N\Big(\frac{2t}{T}\Big)\,c_{kk}(t)\ds t \\
    \le \frac{1}{\sqrt{2N+1}}\Big(\int_0^TF^{2N+1}\Big(\frac{2t}{T}\Big)\ds t\Big)^{1/2} \Big(\int_0^Tc_{kk}(t)^2\ds t\Big)^{1/2} \le KTN^{-1/2}
\end{equation*}
by Jensen's inequality, Burkholder-Gundy inequality, (\ref{rep.Fejer}), (\ref{Fejer.integral}),
\begin{multline*}
    \E\Big[\frac{1}{2N+1}\int_0^T\sigma_{k\cdot}(t)\ds W(t) \int_0^tD^N\Big(\frac{t+u}{T}\Big)\sigma_{k\cdot}(u)\ds W(u)\Big] \\
    \le \frac{1}{2N+1}\E\Big[\Big(\int_0^T\sigma_{k\cdot}(t)\ds W(t) \int_0^tD^N\Big(\frac{t+u}{T}\Big)\sigma_{k\cdot}(u)\ds W(u)\Big)^2\Big]^{1/2} \\
    \le \frac{K}{\sqrt{2N+1}}\Big[\int_0^T\int_0^tF^{2N+1}\Big(\frac{t+u}{T}\Big)\ds u\ds t\Big]^{1/2} \le KTN^{-1/2}
\end{multline*}
hence $\frac{1}{2N+1}\sum_{|s|\le N}|F(\mathrm{d}M_k)_s|^2\le KTN^{-1/2}$. 

Similarly, $\frac{1}{2N+1}\sum_{|s|\le N}|F(\mathrm{d}M_k)_{q-s}|^2\le KTN^{-1/2}$. Thus
\begin{equation}\label{FX2-FM2}
    \E\Big(\big|R(0)^{n,N}_{jk,q}\big|\Big) \le KTN^{-3/4}.
\end{equation}

\section{Some martingale structures}
%\subsection{preparing some martingales}
Now let's prepare some It\^o martingales that are indispensably useful in the incoming asymptotic analysis.

For $j,k=1,\cdots,d$, define the following It\^o martingales:
\begin{eqnarray}\label{def.U}
	U^{n,N}_{jk}(t) &=& \int_0^t \dd{jk}{t,u}\, \sigma_{k\cdot}(u) \ds W(u) \nonumber\\
	\widetilde{U}^{n,N,M}_{jk}(t) &=& \int_0^t \dd{jk}{u,t}\, \widehat{\rho}_{jk}^M(\thb{j}{u})\, \sigma_{j\cdot}(u) \ds W(u) \\
	\widehat{U}^{n,N,M}_{jk,T}(t,u) &=& \int_0^u F^M\Big(\frac{t-\thb{j}{v}}{T}\Big)\,\dd{jk}{v,u}\,\sigma_{j\cdot}(v)\ds W(v), \nonumber
\end{eqnarray}
and for $j,k,l,m=1,\cdots,d$,
\begin{eqnarray}\label{def.Z}
	Z^{n,N}_{jk,lm}(t) &=& \int_0^t \dd{jk}{t,u}\, \sigma_{k\cdot}(u)\, U^{n,N}_{lm}(u) \ds W(u) \nonumber\\
	\breve{Z}^{n,N}_{jk,lm}(t) &=& \int_0^t \dd{jk}{t,u}\, \sigma_{k\cdot}(u)\, \widetilde{U}^{n,N,M}_{lm}(u) \ds W(u) \nonumber\\
	\mathring{Z}^{n,N}_{jk,lm}(t) &=& \int_0^t \dd{lm}{u,t}\, \widehat{\rho}_{lm}^M(\thb{l}{u})\, \sigma_{l\cdot}(u)\, {U}^{n,N}_{jk}(u) \ds W(u) \nonumber\\
	\widetilde{Z}^{n,N}_{jk,lm}(t) &=& \int_0^t \dd{jk}{u,t}\, \widehat{\rho}_{jk}^M(\thb{j}{u})\, \sigma_{j\cdot}(u)\, \widetilde{U}^{n,N,M}_{lm}(u) \ds W(u) .
\end{eqnarray}

Based the definition (\ref{def.U}), by It\^o's formula,
\begin{eqnarray}\label{rep.U-Z}
	U^{n,N}_{jk}(t)\,U^{n,N}_{lm}(t) &=& \int_0^t \dd{jk}{t,u}\,\dd{lm}{t,u}\, c_{km}(u) \ds u + Z^{n,N}_{jk,lm}(t) + Z^{n,N}_{lm,jk}(t) \nonumber\\
	U^{n,N}_{jk}(t)\,\widetilde{U}^{n,N,M}_{lm}(t) &=& \int_0^t \dd{jk}{t,u}\,\dd{lm}{u,t}\, \widehat{\rho}_{lm}^M(\thb{l}{u})\, c_{kl}(u)\ds u + \breve{Z}^{n,N}_{jk,lm}(t) + \mathring{Z}^{n,N}_{jk,lm}(t) \nonumber\\
	\widetilde{U}^{n,N,M}_{jk}(t)\,U^{n,N}_{lm}(t) &=& \int_0^t \dd{jk}{u,t}\,\dd{lm}{t,u}\, \widehat{\rho}_{jk}^M(\thb{j}{u})\, c_{jm}(u) \ds u + \breve{Z}^{n,N}_{lm,jk}(t) + \mathring{Z}^{n,N}_{lm,jk}(t)\nonumber\\
	\widetilde{U}^{n,N,M}_{jk}(t)\,\widetilde{U}^{n,N,M}_{lm}(t) &=& \int_0^t \dd{jk}{u,t}\,\dd{lm}{u,t}\, \widehat{\rho}_{jk}^M(\thb{j}{u})\,\widehat{\rho}_{lm}^M(\thb{l}{u})\, c_{jl}(u)\ds u \nonumber\\
	&& \hspace{6cm} + \widetilde{Z}^{n,N}_{jk,lm}(t) + \widetilde{Z}^{n,N}_{lm,jk}(t).
\end{eqnarray}

We have the following lemma about the magnitudes of quadratics.	
\begin{lem}\label{lem.U2.Z2}
Assume assumption \ref{a.SU} and (\ref{cond.g}), there exist some finite positive constant $K$ such that
\begin{eqnarray}\label{U2}
	\E\big[U^{n,N}_{jk}(t)U^{n,N}_{jk}(u)\big] &\le& K \int_0^{t\wedge u} \dd{jk}{t,v}\,\dd{jk}{u,v}\ds v \nonumber\\
	\E\big[U^{n,N}_{jk}(t)\widetilde{U}^{n,N,M}_{jk}(u)\big] &\le& K \int_0^{t\wedge u} \dd{jk}{t,v}\,\dd{jk}{v,u}\ds v \nonumber\\
	\E\big[\widetilde{U}^{n,N,M}_{jk}(t)\widetilde{U}^{n,N,M}_{jk}(u)\big] &\le& K \int_0^{t\wedge u} \dd{jk}{v,t}\,\dd{jk}{v,u}\ds v \\
	\E\big[\widehat{U}^{n,N,M}_{jk,T}(t,u)^2\big] &\le& K \int_0^u F^M\Big(\frac{t-\thb{j}{v}}{T}\Big)^2\,\dd{jk}{v,u}^2\ds v , \nonumber
\end{eqnarray}
and
\begin{eqnarray*}
	\E\big[Z^{n,N}_{jk,lm}(t)Z^{n,N}_{jk,lm}(u)\big] &\le& K \int_0^{t\wedge u} \dd{jk}{t,v}\,\dd{jk}{u,v}\ds v \int_0^v \dd{lm}{v,\vartheta}^2\ds \vartheta\\
	\E\big[\breve{Z}^{n,N}_{jk,lm}(t)\breve{Z}^{n,N}_{jk,lm}(u)\big] &\le& K \int_0^{t\wedge u} \dd{jk}{t,v}\,\dd{jk}{u,v}\ds v \int_0^v \dd{lm}{\vartheta,v}^2\ds \vartheta\\
	\E\big[\mathring{Z}^{n,N}_{jk,lm}(t)\mathring{Z}^{n,N}_{jk,lm}(u)\big] &\le& K \int_0^{t\wedge u} \dd{lm}{v,t}\,\dd{lm}{v,u}\ds v \int_0^v \dd{jk}{v,\vartheta}^2\ds \vartheta\\
	\E\big[\widetilde{Z}^{n,N}_{jk,lm}(t)\widetilde{Z}^{n,N}_{jk,lm}(u)\big] &\le& K \int_0^{t\wedge u} \dd{jk}{v,t}\,\dd{jk}{v,u}\ds v \int_0^v \dd{lm}{\vartheta,v}^2\ds \vartheta.
\end{eqnarray*}
\end{lem}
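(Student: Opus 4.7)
\textbf{Plan of proof for Lemma \ref{lem.U2.Z2}.} The statement is a collection of second-moment bounds for two layers of It\^o stochastic integrals whose integrands are scaled shifted Dirichlet kernels $d^{n,N}_{jk}$ multiplied by coefficients from $\sigma$. I would prove all eight inequalities in two stages, leveraging assumption \ref{a.T} to treat the kernels as non-random and assumption \ref{a.SU} to bound the entries of $c$ uniformly.

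\textbf{Stage 1: the $U$- and $\widehat{U}$-bounds.} These are direct applications of It\^o's isometry. For the first bound, I would fix $t \le u$ without loss of generality and write
\[ U^{n,N}_{jk}(t)\, U^{n,N}_{jk}(u) = \int_0^t d^{n,N}_{jk}(t,v)\,\sigma_{k\cdot}(v)\ds W(v) \cdot \int_0^u d^{n,N}_{jk}(u,v)\,\sigma_{k\cdot}(v)\ds W(v). \]
Conditional on $\mathcal{G}=\sigma(\mathcal{T}_j,\,j=1,\dots,d)$, the kernels are deterministic and $W$ is a Brownian motion by assumption \ref{a.T}, so It\^o's isometry gives
\[ \E\big[U^{n,N}_{jk}(t)\, U^{n,N}_{jk}(u)\,\big|\,\mathcal{G}\big] = \E\Big[\int_0^{t\wedge u} d^{n,N}_{jk}(t,v)\,d^{n,N}_{jk}(u,v)\, c_{kk}(v)\ds v\,\Big|\,\mathcal{G}\Big]. \]
Under assumption \ref{a.SU} one has $|c_{kk}(v)|\le K$ uniformly, so taking outer expectation yields the claim (where the inequality is understood with implicit absolute values on the kernel product, since $d^{n,N}_{jk}$ is real-valued but not sign-definite). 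The analogous derivation handles $\E[U\widetilde{U}]$, $\E[\widetilde{U}\widetilde{U}]$ (using $\widehat{\rho}^M_{jk}$'s uniform boundedness), and the $\widehat{U}$ bound which, being a single squared integral, involves only squared kernels and needs no care with signs.

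\textbf{Stage 2: the $Z$-type bounds.} Each $Z$-type process has the form $\int_0^{\cdot}d^{n,N}_{jk}(\cdot,v)\,\sigma_{k\cdot}(v)\,V(v)\ds W(v)$ where $V(v)$ is one of the $U$- or $\widetilde{U}$-martingales from Stage 1. Applying It\^o's isometry at the outer level (again with $t\le u$ WLOG and conditionally on $\mathcal{G}$) gives, for instance,
\[ \E\big[Z^{n,N}_{jk,lm}(t)\, Z^{n,N}_{jk,lm}(u)\,\big|\,\mathcal{G}\big] = \E\Big[\int_0^{t\wedge u} d^{n,N}_{jk}(t,v)\,d^{n,N}_{jk}(u,v)\, c_{kk}(v)\, U^{n,N}_{lm}(v)^2 \ds v\,\Big|\,\mathcal{G}\Big]. \]
I would then apply Fubini to exchange expectation and the outermost $\ds v$ integral, and substitute the Stage 1 bound at the diagonal point $v=v$ for $\E[U^{n,N}_{lm}(v)^2\,|\,\mathcal{G}]$. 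This gives exactly $K\int_0^{t\wedge u} d^{n,N}_{jk}(t,v)\,d^{n,N}_{jk}(u,v)\int_0^v d^{n,N}_{lm}(v,\vartheta)^2\ds\vartheta\ds v$ after using $|c_{kk}|\le K$. The other three $Z$-type bounds are obtained by the same recipe; the only bookkeeping difference is which combination of $d^{n,N}(\cdot,\cdot)$ arguments appears in the inner $U$- or $\widetilde{U}$-variance bound from Stage 1.

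\textbf{Main obstacle.} The work is almost entirely notational rather than analytical: keeping straight which of the four orderings of the Dirichlet-kernel arguments $d(t,v),\,d(u,v),\,d(v,t),\,d(v,u)$ is produced by each product, and matching them to the correct right-hand side. A secondary subtlety is that $d^{n,N}_{jk}$ is a signed Dirichlet kernel, so the bounds must be interpreted with implicit absolute values on kernel products whenever a squared moment is not being computed at the diagonal; I would handle this by noting upfront that all appearances in the subsequent asymptotic analysis use Cauchy--Schwarz or Fubini arguments against a non-negative quantity, so the bounds are really controlling $|\E[\cdot]|$.
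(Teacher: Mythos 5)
Your proposal is correct and follows essentially the same route as the paper: iterated It\^o isometry (equivalently, It\^o's formula plus Fubini) combined with the uniform bounds on $c$ and $\widehat{\rho}^M_{jk}$ from assumption \ref{a.SU} and (\ref{cond.g}), first for the $U$-type covariances and then, substituting those bounds into the outer isometry, for the $Z$-type covariances. Your explicit conditioning on $\mathcal{G}$ and your remark about the signed Dirichlet kernels are minor formalizations of what the paper leaves implicit, not a different argument.
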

\begin{proof}
By (\ref{def.U}), It\^o's formula and Fubini's theorem,
\begin{eqnarray*}
	\E\big[U^{n,N}_{jk}(t)\,U^{n,N}_{jk}(u)\big] &=& \int_0^{t\wedge u} \dd{jk}{t,v}\,\dd{jk}{u,v}\,\E[c_{jk}(v)]\ds v \\
	\E\big[U^{n,N}_{jk}(t)\,\widetilde{U}^{n,N,M}_{jk}(u)\big] &=& \int_0^{t\wedge u} \dd{jk}{t,v}\,\dd{jk}{v,u}\,\E\big[\widehat{\rho}_{jk}^M(\thb{j}{v})\,c_{jk}(v)\big]\ds v \\
	\E\big[\widetilde{U}^{n,N,M}_{jk}(t)\,\widetilde{U}^{n,N,M}_{jk}(u)\big] &=& \int_0^{t\wedge u} \dd{jk}{v,t}\,\dd{jk}{v,u}\,\E\big[\widehat{\rho}_{jk}^M(\thb{j}{v})^2\,c_{jk}(v)\big]\ds v \\
	\E\big[\widehat{U}^{n,N,M}_{jk,T}(t,u)^2\big] &=& \int_0^u F^M\Big(\frac{t-\thb{j}{v}}{T}\Big)^2\dd{jk}{v,u}^2\,\E[c_jj(v)]\ds v,
\end{eqnarray*}
thereby the first claim follows from assumption \ref{a.SU} and (\ref{cond.g}).
	
According to It\^o's formula, (\ref{def.Z}), (\ref{rep.U-Z}),
\begin{multline*}
	\E\big[Z^{n,N}_{jk,lm}(t)Z^{n,N}_{jk,lm}(u)\big] = \E\int_0^{t\wedge u} \dd{jk}{t,v}\,\dd{jk}{u,v}\,c_{kk}(v)\,U^{n,N}_{lm}(v)^2\ds v \\
	\le K \int_0^{t\wedge u} \dd{jk}{t,v}\,\dd{jk}{u,v}\,\E\big[U^{n,N}_{lm}(v)^2\big]\ds v,
\end{multline*}
similarly,
\begin{eqnarray*}
	\E\big[\breve{Z}^{n,N}_{jk,lm}(t)\breve{Z}^{n,N}_{jk,lm}(u)\big] &\le& K \int_0^{t\wedge u} \dd{jk}{t,v}\,\dd{jk}{u,v}\,\E\big[\widetilde{U}^{n,N}_{lm}(v)^2\big]\ds v \\
	\E\big[\mathring{Z}^{n,N}_{jk,lm}(t)\mathring{Z}^{n,N}_{jk,lm}(u)\big] &\le& K \int_0^{t\wedge u} \dd{lm}{v,t}\,\dd{lm}{v,u}\,\E\big[U^{n,N}_{jk}(v)^2\big]\ds v \\
	\E\big[\widetilde{Z}^{n,N}_{jk,lm}(t)\widetilde{Z}^{n,N}_{jk,lm}(u)\big] &\le& K \int_0^{t\wedge u} \dd{jk}{v,t}\,\dd{jk}{v,u}\,\E\big[\widetilde{U}^{n,N}_{lm}(v)^2\big]\ds v,
\end{eqnarray*}
then the second claim follows from assumption \ref{a.U} and the first claim proved earlier.
\end{proof}
\section{Proof of proposition 2}\label{apdx:spot}
Recall the definitions (\ref{def.FShat}) and (\ref{def.Fhat})， based on (\ref{def.theta_n}) and (\ref{def.dirichlet.scale}), we have the expression:
\begin{equation*}
	\Fdx{j}{q-s}\times\Fdx{k}{s} = \int_0^T \ee{(q-s)}{\thb{j}{t}}\ds X_j(t) \int_0^T \ee{s}{\thb{k}{u}}\ds X_k(u),
\end{equation*}
by (\ref{def.X.mtg}) and It\^o's formula,
\begin{equation}\label{}
	\Fdx{j}{q-s}\times\Fdx{k}{s} = \phi^n_{q,s,jk} + \xi(0)^n_{q,s,jk} + \xi(1)^n_{q,s,jk},
\end{equation}
where
\begin{eqnarray*}
	\phi^n_{q,s,jk} &=& \int_0^T \ee{q}{\thb{j}{t}} \ei{s}{[\thb{j}{t}-\thb{k}{t}]} c_{jk}(t)\ds t\\
	\xi(0)^n_{q,s,jk} &=& \int_0^T \ee{q}{\thb{j}{t}}\ds X_j(t)\, \int_0^t \ei{s}{[\thb{j}{t}-\thb{k}{u}]}\ds X_k(u) \\
	\xi(1)^n_{q,s,jk} &=& \int_0^T \ds X_k(t)\, \int_0^t \ee{q}{\thb{j}{u}}\ei{s}{[\thb{j}{u}-\thb{k}{t}]}\ds X_j(u),
\end{eqnarray*}
then by (\ref{def.Fhat}), (\ref{def.Dirichlet}), (\ref{def.dirichlet.scale}),
\begin{equation}\label{rep.Fc}
	\Fc{jk}{q} = \Phi^{n,N}_{q,jk} + \Xi(0)^{n,N}_{q,jk} + \Xi(1)^{n,N}_{q,jk},
\end{equation}
where
\begin{eqnarray*}
	\Phi^{n,N}_{q,jk} &=& \int_0^T \ee{q}{\thb{j}{t}} \dd{jk}{t,t}\, c_{jk}(t)\ds t\\
	\Xi(0)^{n,N}_{q,jk} &=& \int_0^T \ee{q}{\thb{j}{t}}\ds X_j(t)\, \int_0^t \dd{jk}{t,u}\ds X_k(u) \\
	\Xi(1)^{n,N}_{q,jk} &=& \int_0^T \ds X_k(t)\, \int_0^t \ee{q}{\thb{j}{u}}\, \dd{jk}{u,t}\ds X_j(u).
\end{eqnarray*}

Thus, by (\ref{def.Fhat}) and (\ref{def.U}) we write
\begin{multline*}
	\chat{jk}{t} = \frac{1}{T}\int_0^T F^M\Big(\frac{t-\thb{j}{u}}{T}\Big)\,\dd{jk}{u,u}\,c_{jk}(u) \ds u \\
	+ \frac{1}{T}\int_0^T F^M\Big(\frac{t-\thb{j}{u}}{T}\Big)\,U^{n,N}_{jk}(u)\,\sigma_{j\cdot}(u) \ds W(u) + \frac{1}{T}\int_0^T \widehat{U}^{n,N}_{jk,T}(t,u)\,\sigma_{k\cdot}(u) \ds W(u)
\end{multline*}
therefore
\begin{equation}\label{decomp.chat.error}
	\chat{jk}{t} - c_{jk}(t) = Q(t,0)^{n,N,M}_{jk,T} + Q(t,1)^{n,N,M}_{jk,T} + \Omega(t)^{n,N,M}_{jk,T},
\end{equation}
where
\begin{eqnarray*}
	Q(t,0)^{n,N,M}_{jk,T} &=& \frac{1}{T}\int_0^T F^M\Big(\frac{t-\thb{j}{u}}{T}\Big)\,U^{n,N}_{jk}(u)\,\sigma_{j\cdot}(u) \ds W(u) \\ 
	Q(t,1)^{n,N,M}_{jk,T} &=& \frac{1}{T}\int_0^T \widehat{U}^{n,N,M}_{jk,T}(t,u)\,\sigma_{k\cdot}(u) \ds W(u) \\
	\Omega(t)^{n,N,M}_{jk,T} &=& \frac{1}{T}\int_0^T F^M\Big(\frac{t-\thb{j}{u}}{T}\Big)\,\dd{jk}{u,u}\,c_{jk}(u)\ds u - c_{jk}(t).
\end{eqnarray*}

By It\^o's formula and Fubini's theorem,
\begin{eqnarray*}
	\E\big[\big|Q(t,0)^{n,N,M}_{jk,T}\big|^2\big] &=& \frac{1}{T^2}\int_0^T F^M\Big(\frac{t-\thb{j}{u}}{T}\Big)^2\,\E\big[U^{n,N}_{jk}(u)^2\,c_{jj}(u)\big]\ds u\\
	\E\big[\big|Q(t,1)^{n,N,M}_{jk,T}\big|^2\big] &=& \frac{1}{T^2}\int_0^T \E\big[\widehat{U}^{n,N,M}_{jk,T}(t,u)^2\,c_{kk}(u)\big]\ds u,
\end{eqnarray*}
because of assumption \ref{a.SU} and lemma \ref{lem.U2.Z2}, 
\begin{multline*}
	\E\big[\big|Q(t,0)^{n,N,M}_{jk,T}\big|^2\big] \le K\int_0^T F^M\Big(\frac{t-\thb{j}{u}}{T}\Big)^2\ds u\, \int_0^u\dd{jk}{u,v}^2\ds v \\
	\le K \Big[\sup_{u\in[0,T]}\int_0^T\dd{jk}{u,v}^2\ds v\Big]\cdot\Big[\int_0^TF^M\Big(\frac{t-\thb{j}{u}}{T}\Big)^2\ds u\Big],
\end{multline*}
and by Fubini's theorem,
\begin{multline*}
	\E\big[\big|Q(t,1)^{n,N,M}_{jk,T}\big|^2\big]\\ \le K\int_0^T\ds u\, \int_0^uF^M\Big(\frac{t-\thb{j}{v}}{T}\Big)^2\,\dd{jk}{v,u}^2\ds v
	= K\int_0^T F^M\Big(\frac{t-\thb{j}{v}}{T}\Big)^2\ds v\, \int_v^T\,\dd{jk}{v,u}^2\ds u \\
	\le K \Big[\sup_{v\in[0,T]}\int_0^T\dd{jk}{v,u}^2\ds u\Big]\cdot\Big[\int_0^TF^M\Big(\frac{t-\thb{j}{u}}{T}\Big)^2\ds u\Big],
\end{multline*}
according to (\ref{Jackson}) and lemma \ref{lem.dp},
\begin{equation}\label{null.Qs}
	\big|Q(t,0)^{n,N,M}_{jk,T}\big|^2 + \big|Q(t,1)^{n,N,M}_{jk,T}\big|^2 \le K\frac{M}{N}.
\end{equation}

Notice
\begin{equation}\label{decomp.Omega}
	\Omega(t)^{n,N,M}_{jk,T} = \Omega(t,0)^{n,N,M}_{jk,T} + \Omega(t,1)^{n,M}_{jk,T} + \Omega(t,2)^M_{jk,T},
\end{equation}
where
\begin{eqnarray*}
	\Omega(t,0)^{n,N,M}_{jk,T} &=& \frac{1}{T}\int_0^T F^M\Big(\frac{t-\thb{j}{u}}{T}\Big)\, c_{jk}(u)\, \big[\dd{jk}{u,u}-1\big]\ds u \\
	\Omega(t,1)^{n,M}_{jk,T} &=& \frac{1}{T}\int_0^T F^M\Big(\frac{t-\thb{j}{u}}{T}\Big)\, c_{jk}(u)\ds u - \frac{1}{T}\int_0^T F^M\Big(\frac{t-u}{T}\Big)\, c_{jk}(u)\ds u \\
	\Omega(t,2)^M_{jk,T} &=& \frac{1}{T}\int_0^T F^M\Big(\frac{t-u}{T}\Big)\, c_{jk}(u)\ds u - c_{jk}(t).
\end{eqnarray*}
Based on the Taylor series of sine function, assumption \ref{a.SU}, (\ref{Fejer.integral}),
\begin{equation*}
	\E\Big(\sup_{t\in[0,T]}\big|\Omega(t,0)^{n,N,M}_{jk,T}\big|^2\Big) \le K \frac{N^4}{\underline{n}^4} \mathds{1}_{\{j\ne k\}};
\end{equation*}
according to assumption \ref{a.SU} and lemma \ref{lem.Fejer.Riemann}, we know
\begin{equation*}
	\E\Big(\sup_{t\in[0,T]}\big|\Omega(t,1)^{n,M}_{jk,T}\big|^2\Big) \le K \frac{M^2}{\underline{n}^2};
\end{equation*}
by lemma \ref{lem.Fejer.uc},
\begin{equation*}
\begin{array}{ll}
	\E\Big(\sup_{t\in[1/M,T-1/M]}\big|\Omega(t,2)^M_{jk,T}\big|^2\Big) \le K M^{-2\alpha}, &\text{if } c(0)\ne c(T)\\
	\E\Big(\sup_{t\in[0,T]}\big|\Omega(t,2)^M_{jk,T}\big|^2\Big) \le K M^{-2\alpha}, &\text{if } c(0)=c(T)
\end{array}
\end{equation*}
then proposition \ref{prop.msr} follows from (\ref{decomp.chat.error}), (\ref{null.Qs}), (\ref{decomp.Omega}), (\ref{cond.NM}).

\section{Proof of theorem 1, theorem 2}\label{apdx:thm2}
We can write, for $j,k=1,\cdots,d$
\begin{eqnarray}\label{decomp.uni}
	N^{1/2}\Big[\widehat{S}(g)^n_{jk,T} - S(g)_{jk,T}\Big] &=& \overline{S}(0)^{n,N,M,B}_{jk,T} + \overline{S}(1)^{n,N,M}_{jk,T} + \overline{S}(2)^{n,N,M}_{jk,T} \\
	N^{1/2}\Big[\widetilde{S}(g)^n_{jj,T} - S(g)_{jj,T}\Big] &=& \widetilde{S}(0)^{n,N,M}_{jj,T} + \overline{S}(1)^{n,N,M}_{jj,T} + \overline{S}(2)^{n,N,M}_{jj,T} \nonumber
\end{eqnarray}
where
\begin{eqnarray*}
	\overline{S}(0)^{n,N,M,B}_{jk,T} &\coloneqq& N^{1/2} \Bigg[\sum_{h=1}^{B}g\big(\chat{jk}{t_h}\big)\,T/B - \int_{t_0}^{t_{B}} g\big(\chat{jk}{t}\big) \ds t \Bigg] \\
	%&& \hspace{46mm} - N^{1/2}\Bigg(\int_0^{t_L} + \int_{t_{B-L}}^T\Bigg) g\big(\chat{jk}{t}\big)\ds t \\
	\widetilde{S}(0)^{n,N,M}_{jj,T} &\coloneqq& N^{1/2} \Bigg[\sum_{h=1}^{n^j}g\big(\chat{jj}{\tau_h}\big)\Delta^j_h - \int_{\ta{j}{0}}^{\ta{j}{n_j}} g\big(\chat{jj}{t}\big) \ds t \Bigg] \\
	&& \hspace{32mm} N^{1/2}\int_0^{\ta{0}{n_j}} g\big(\chat{jj}{t}\big) \ds t + N^{1/2}\int_{\ta{j}{n_j}}^T g\big(\chat{jj}{t}\big) \ds t \\
	%&& \hspace{46mm} - N^{1/2}\Bigg(\int_0^{\tau_L} + \int_{\tau_{B-L}}^T\Bigg) g\big(\chat{jj}{t}\big)\ds t \\
	\overline{S}(1)^{n,N,M}_{jk,T} &\coloneqq& N^{1/2} \int_0^T \Big\{g\big(\chat{jk}{t}\big) - g(c_{jk}(t)) - \partial g(c_{jk}(t))\Big[\chat{jk}{t} - c_{jk}(t)\Big] \Big\}\ds t \\
	\overline{S}(2)^{n,N,M}_{jk,T} &\coloneqq& N^{1/2} \int_0^T \partial g(c_{jk}(t)) \Big[\chat{jk}{t} - c_{jk}(t)\Big]\ds t,
\end{eqnarray*}
and $t_h=hT/B$.

By assumption \ref{a.U} and (\ref{cond.g}), we know $g\big(\chat{jk}{t}\big)\le K$, then
%Given the assumption \ref{a.T} and the choice $L=o(n/N^{1/2})$, we have \newline
%$\big\|N^{1/2}\big(\int_0^{t_L}+\int_{t_{n-L}}^T\big)g\big(\chat{jk}{t}\big)\ds t\big\| + \big\|N^{1/2}\big(\int_0^{\tau_L}+\int_{\tau_{n-L}}^T\big)g\big(\chat{jj}{t}\big)\ds t\big\| \overset{\mathbb{P}}{\longrightarrow}0$. 
$N^{1/2} \big[\int_0^{\ta{0}{n_j}} g\big(\chat{jj}{t}\big) \ds t + \int_{\ta{j}{n_j}}^T g\big(\chat{jj}{t}\big) \ds t\big] = O_p(N^{1/2}/n)=o_p(1)$ by assumption \ref{a.T} and (\ref{cond.NM}). By Riemann summation and assumption \ref{a.T},
\begin{eqnarray*}
	\big\|\overline{S}(0)^{n,N,M,B}_{jk,T}\big\| &=& O_p(N^{1/2}/B) \\
	%\widetilde{D}^{n,N,M}_{jj,T} &\coloneqq& 
	N^{1/2} \Bigg\|\Bigg[ \sum_{h=0}^{n_j}g\big(\chat{jj}{\tau_h}\big)\Delta^j_h - \int_{\ta{j}{0}}^{\ta{j}{n_j}} g\big(\chat{jj}{t}\big) \ds t\Bigg]\Bigg\| &=& O_p(N^{1/2}/n),
\end{eqnarray*}
%$D^{n,N,M,B,L}_{jk,T}=O_p(N^{1/2}/B)$ and $\widetilde{D}^{n,N,M,L}_{jj,T}$, thus
in view of (\ref{cond.BL}),
\begin{equation*}
	\Big\|\overline{S}(0)^{n,N,M,B}_{jk,T}\Big\| + \left\|\widetilde{S}(0)^{n,N,M}_{jj,T}\right\| \overset{\mathbb{P}}{\longrightarrow}0.
\end{equation*}

By (\ref{cond.g}), $\big\|g\big(\chat{jk}{t}\big) - g(c_{jk}(t)) - \partial g(c_{jk}(t))\big[\chat{jk}{t} - c_{jk}(t)\big]\big\| = O_p\big(|\chat{jk}{t} - c_{jk}(t)|^2\big)$, hence we have
\begin{equation*}
	\Big\|\overline{S}(1)^{n,N,M}_{jk,T}\Big\| \le KN^{1/2}\int_0^T\big|\chat{jk}{t} - c_{jk}(t)\big|^2\ds t \le KTN^{1/2}\sup_{t\in[0,T]}\big|\chat{jk}{t} - c_{jk}(t)\big|^2,
\end{equation*}
following from proposition \ref{prop.msr}, we have $\E\big\|\overline{S}(1)^{n,N,M}_{jk,T}\big\|\le KTM/N^{1/2}$ under conditions of theorem \ref{thm.bi}. Then by (\ref{cond.g}) and Markov's inequality, 
\begin{equation*}
	\Big\|\overline{S}(1)^{n,N,M}_{jk,T}\Big\| \overset{\mathbb{P}}{\longrightarrow} 0.
\end{equation*}

It remains to show the stable convergence of $\overline{S}(2)^{n,N,M}_{jk,T} = N^{1/2} \int_0^T\partial g(c_{jk}(t))\Big[\overline{\chat{jk}{t}}-c_{jk}(t)\Big]\ds t$.

Let $\underline{T} = \min_{j}\ta{j}{n_j}$, note $T-\underline{T}\le \Delta(n)$, without loss of generality, we can assume $\underline{T}=T$, i.e., $\ta{j}{n^j}=T$, for $j=1,\cdots,d$.

\subsection{decomposition}
Let $\rho_{jk}(t) = \partial g(c_{jk}(t))$, by the definition (\ref{def.chat}), we have
\begin{multline*}
	\int_0^T\rho_{jk}(t)\overline{\chat{jk}{t}}\ds t 
	= \frac{1}{T}\sum_{|q|<M}\Big(1 - \frac{|q|}{M}\Big)\overline{\Fc{jk}{q}}\int_0^T \rho_{jk}(t)\ee{q}{t}\ds t \\
	= \frac{1}{T}\sum_{|q|<M}\Big(1 - \frac{|q|}{M}\Big)F(\rho_{jk})_q\,\overline{\Fc{jk}{q}},
\end{multline*}
by (\ref{def.fhat}) and (\ref{rep.Fc}),
\begin{multline*}
	\frac{N^{1/2}}{T}\sum_{|q|<M}\Big(1 - \frac{|q|}{M}\Big)F(\rho_{jk})_q\,\overline{\Fc{jk}{q}} = \frac{N^{1/2}}{T}\sum_{|q|<M}\Big(1 - \frac{|q|}{M}\Big)F(\rho_{jk})_q \Big[\overline{\Phi^{n,N}_{q,jk}} + \overline{\Xi(0)^{n,N}_{q,jk}} + \overline{\Xi(1)^{n,N}_{q,jk}}\Big] \\
	= N^{1/2}\int_0^T \widehat{\rho}_{jk}^M(\thb{j}{t})\, \dd{jk}{t,t}\, c_{jk}(t) \ds t + e(0)^{n,N,M}_{jk,T} + e(1)^{n,N,M}_{jk,T},
\end{multline*}
where
\begin{eqnarray}\label{def.es}
	e(0)^{n,N,M}_{jk,T} &=& N^{1/2} \int_0^T \widehat{\rho}_{jk}^M(\thb{j}{t})\, \sigma_{j\cdot}(t) \ds W(t) \int_0^t \dd{jk}{t,u}\,\sigma_{k\cdot}(u) \ds W(u) \nonumber\\
	e(1)^{n,N,M}_{jk,T} &=& N^{1/2} \int_0^T \sigma_{k\cdot}(t) \ds W(t) \int_0^t \widehat{\rho}_{jk}^M(\thb{j}{u})\, \dd{jk}{u,t}\,\sigma_{j\cdot}(u) \ds W(u).
\end{eqnarray}

Therefore, we have the following decomposition:
\begin{equation}\label{decomp.error_jk}
	N^{1/2}\int_0^T \rho_{jk}(t)\Big[\overline{\chat{jk}{t}}-c_{jk}(t)\Big]\ds t = o(0)^{n,M}_{jk,T} + o(1)^{n,N,M}_{jk,T} + e(0)^{n,N,M}_{jk,T} + e(1)^{n,N,M}_{jk,T},
\end{equation}
where
\begin{eqnarray}\label{def.os}
	o(0)^{n,M}_{jk,T} &=& N^{1/2} \int_0^T \big[\widehat{\rho}_{jk}^M(\thb{j}{t}) - \rho_{jk}(t)\big]c_{jk}(t)\ds t \nonumber\\
	o(1)^{n,N,M}_{jk,T} &=& N^{1/2}\int_0^T \widehat{\rho}_{jk}^M(\thb{j}{t})\, c_{jk}(t)\, \big[\dd{jk}{t,t}-1\big] \ds t.
\end{eqnarray}

\noindent\textbf{(1)} On one hand, by (\ref{Fejer.integral}), (\ref{rep.fhat}), Fubini's theorem, and lemma \ref{lem.Fejer.Riemann},
\begin{equation*}
	o(0)^{n,M}_{jk,T} = N^{1/2} \int_0^T c_{jk}(t)\ds t\, \frac{1}{T}\int_0^T F^M\Big(\frac{u-\thb{j}{t}}{T}\Big) \big[\rho_{jk}(u) - \rho_{jk}(t)\big] \ds u = \frac{N^{1/2}}{T} J^M_{jk,T} + O_p\big(N^{1/2}M/n\big),
\end{equation*}
where
	\[ J^M_{jk,T} = \int_0^T \int_0^T F^M\Big(\frac{u-t}{T}\Big) \big[\rho_{jk}(u) - \rho_{jk}(t)\big] c_{jk}(t) \ds t \ds u. \]
By symmetry of variables, $J^M_{jk,T} = \int_0^T \int_0^T F^M[(u-t)/T] \big[\rho_{jk}(t) - \rho_{jk}(u)\big] c_{jk}(u) \ds u \ds t$, hence
	\[ J^M_{jk,T} = -\frac{1}{2}\int_0^T\ds u \int_0^T F^M\Big(\frac{u-t}{T}\Big) \big[\rho_{jk}(u) - \rho_{jk}(t)\big] \big[c_{jk}(u) - c_{jk}(t)\big] \ds t. \]
By (\ref{cond.g}) the modulus of continuity of $\rho$ is determined by that of $c$, let $L^M_{jk,T}(u) \coloneqq \int_0^T F^M[(u-t)/T] \big[\rho_{jk}(u) - \rho_{jk}(t)\big] \big[c_{jk}(u) - c_{jk}(t)\big] \ds t$, by periodicity of $c$ and $\rho$, $L^M_{jk,T}(u) = \int_{u-T/2}^{u+T/2} F^M[(u-t)/T] \big[\rho_{jk}(u) - \rho_{jk}(t)\big] \big[c_{jk}(u) - c_{jk}(t)\big] \ds t$. Note
\begin{equation*}
	\big|L^M_{jk,T}(u)\big| \le \Big(\int_{|u-t|\le 1/M} + \int_{|u-t|>1/M}\Big) F^M\Big(\frac{u-t}{T}\Big) \big|\rho_{jk}(u) - \rho_{jk}(t)\big| \big|c_{jk}(u) - c_{jk}(t)\big| \ds t,
\end{equation*}
through an argument similar to the proof of lemma \ref{lem.Fejer.uc}, we have $\E\big|L^M_{jk,T}(u)\big| \le K\big[M^{-2\alpha} + M^{-(1+\alpha)}\big]$,
%$\E\big|L^M_{jk,T}(u)\big| \le K\big[M^{-2\alpha} + M^{-(1+\alpha)}\big]$, 
thus
\begin{equation*}
	\E\big|o(0)^{n,M}_{jk,T}\big| \le K\Big[\Big(\frac{N}{M^{4\alpha}}\Big)^{1/2} + \Big(\frac{N}{\underline{n}}\Big)^{1/2}\frac{M}{\underline{n}^{1/2}}\Big],
\end{equation*}
by (\ref{cond.NM}) and Markov's inequality, we have shown the asymptotic negligibility in probability of $o(0)^{n,M}_{jk,T}$, i.e.,
\begin{equation}\label{null.o(0)}
	o(0)^{n,M}_{jk,T} \overset{\mathbb{P}}{\longrightarrow} 0.
\end{equation}

\noindent\textbf{(2)} On the other hand, according to the definition (\ref{def.dirichlet.scale}) and the Taylor series of the sine function,
\begin{equation}\label{d-1}
	\dd{jk}{t,t} - 1 = -\frac{\pi^2}{6}(2N+1)^2\big[\thb{j}{t}-\thb{k}{t}\big]^2 + O_p(N^4\Delta(n)^4) %\le KN^2\Delta(n)^2\mathds{1}_{\{j\ne k\}}
\end{equation}
so
\begin{equation*}
	\E\big|o(1)^{n,N,M}_{jk,T}\big| \le KN^{5/2}\Delta(n)^2\mathds{1}_{\{j\ne k\}},
\end{equation*}
hence if we let $N\le\lfloor\underline{n}/2\rfloor-M+1$ in case $j=k$, and let $N=o(\underline{n}^{4/5})$ in case $j\ne k$, it follows
\begin{equation}\label{null.o(1)}
	o(1)^{n,N,M}_{jk,T} \overset{\mathbb{P}}{\longrightarrow} 0.
\end{equation}

Thus the asymptotics solely relies on $e(0)^{n,N,M}_{jk,T} + e(1)^{n,N,M}_{jk,T}$.

\subsection{stable convergence}
By (\ref{def.es}) and (\ref{def.U}), we can write
\begin{eqnarray}\label{rep.es}
e(0)^{n,N,M}_{jk,T} &=& N^{1/2}\int_0^T\widehat{\rho}_{jk}^M(\thb{j}{t})\,\sigma_{j\cdot}(t)\,U^{n,N}_{jk}(t)\ds W(t) \nonumber\\
e(1)^{n,N,M}_{jk,T} &=& N^{1/2}\int_0^T\sigma_{k\cdot}(t)\,\widetilde{U}^{n,N,M}_{jk}(t)\ds W(t).
\end{eqnarray}

To establish the stable convergence, according to \cite{j97}, \cite{jp98}, we need to consider the limits in probability of the brackets $\big\langle e(0)^{n,N,M}_{jk} + e(1)^{n,N,M}_{jk}, W_r\big\rangle_T$ and $\big\langle e(0)^{n,N,M}_{jk} + e(1)^{n,N,M}_{jk}, e(0)^{n,N,M}_{jk} + e(1)^{n,N,M}_{jk}\big\rangle_T$.

\noindent\textbf{(1)} First, let's consider, for $r=1\cdots,d'$,
\begin{eqnarray*}
	\big\langle e(0)^{n,N,M}_{jk},W_r\big\rangle_T &=& N^{1/2}\int_0^T \widehat{\rho}_{jk}^M(\thb{j}{t})\, \sigma_{jr}(t)\, U^{n,N}_{jk}(t)\ds t \\
	\big\langle e(1)^{n,N,M}_{jk},W_r\big\rangle_T &=& N^{1/2}\int_0^T \sigma_{kr}(t)\, \widetilde{U}^{n,N,M}_{jk}(t)\ds t.
\end{eqnarray*}
notice that
\begin{equation*}
	\big\langle e(0)^{n,N,M}_{jk},W_r\big\rangle_T^2 = N\int_0^T\int_0^T \widehat{\rho}_{jk}^M(\thb{j}{t})\,\widehat{\rho}_{jk}^M(\thb{j}{u})\, \sigma_{jr}(t)\,\sigma_{jr}(u) %\\
	\times U^{n,N}_{jk}(t)U^{n,N}_{jk}(u)\ds t\ds u,
\end{equation*}
according to lemma \ref{lem.U2.Z2}, Fubini's theorem, H\"older's inequality, %based on (\ref{cond.g})
\begin{multline*}
	\E\big[\big\langle e(0)^{n,N,M}_{jk},W_r\big\rangle_T^2\big] \le KN\int_0^T\int_0^T\ds t\ds u\, \Big(\int_0^{t\wedge u} \dd{jk}{t,v}\,\dd{jk}{u,v}\ds v\Big) \\
	\le KN \int_0^T\ds v\, \Big(\int_v^T \big|\dd{jk}{t,v}\big|\ds t\Big) \Big(\int_v^T \big|\dd{jk}{u,v}\big|\ds u\Big) \\
	\le KN \int_0^T\ds v\, \Big(\int_v^T \big|\dd{jk}{t,v}\big|\ds t\Big)^2 \le KT^{\frac{3p-2}{p}} N \Big(\int_0^T\big|\dd{jk}{t,v}\big|^p\ds t\Big)^{2/p},
\end{multline*}
similarly,
\begin{equation*}
	\big\langle e(1)^{n,N,M}_{jk},W_r\big\rangle_T^2 = N\int_0^{\tau_n}\int_0^{\tau_n}\sigma_{kr}(t)\sigma_{kr}(u) \times \widetilde{U}^{n,N,M}_{jk}(t)\widetilde{U}^{n,N,M}_{jk}(u)\ds t\ds u,
\end{equation*}
by a similar argument applied to $\E[\langle e(0)^{n,N,M}_{jk},W_r\rangle_T^2]$,
\begin{equation*}
	\E\big[\big\langle e(1)^{n,N,M}_{jk},W_r\big\rangle_T^2\big] \le KT^{\frac{3p-2}{p}} N \Big(\int_0^T\big|\dd{jk}{v,t}\big|^p\ds t\Big)^{2/p}.
\end{equation*}

By Jensen's inequality and Markov's inequality, we have the following lemma.
\begin{lem}\label{lem.stable}
Under the assumptions of theorem \ref{thm.bi}, $\forall j,k=1,\cdots,d$ and $\forall r=1,\cdots,d'$,
\begin{equation*}
	\big\langle e(0)^{n,N,M}_{jk}+e(1)^{n,N,M}_{jk},W_r\big\rangle_T \overset{\mathbb{P}}{\longrightarrow} 0.
\end{equation*}
\end{lem}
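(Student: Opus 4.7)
My plan is to prove the convergence in probability by controlling the second moment of each bracket and then invoking Markov's inequality. The two second-moment estimates
$$\E\big[\big\langle e(0)^{n,N,M}_{jk},W_r\big\rangle_T^2\big]\le KT^{\frac{3p-2}{p}}\,N\,\Big(\int_0^T\big|\dd{jk}{t,v}\big|^p\ds t\Big)^{2/p}$$
and the analogous bound for $e(1)^{n,N,M}_{jk}$ (both valid for every $p>1$) have just been derived in the text using Fubini's theorem, H\"older's inequality, and the covariance estimates for $U^{n,N}_{jk}$ and $\widetilde U^{n,N,M}_{jk}$ from lemma \ref{lem.U2.Z2}. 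So all the machinery needed is already in place; what remains is to substitute a sharp enough bound on the $L^p$-norm of $\dd{jk}{\cdot,\cdot}$ and to select $p$ judiciously.

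The next step is to apply lemma \ref{lem.dp}, which gives, uniformly in $j,k$ and uniformly over the free time argument, $\int_0^T|\dd{jk}{t,v}|^p\ds t\le K_pN^{-1}$ for every $p>1$. Substituting this into the moment bound yields
$$\E\big[\big\langle e(0)^{n,N,M}_{jk},W_r\big\rangle_T^2\big]\le KT^{\frac{3p-2}{p}}\,N^{1-2/p},$$
and likewise for $e(1)^{n,N,M}_{jk}$. The crucial observation is that one is free to choose $p\in(1,2)$: this makes the exponent $1-2/p$ strictly negative, and hence both bounds vanish as $N\to\infty$. Linearity of the quadratic covariation reduces the sum $\langle e(0)^{n,N,M}_{jk}+e(1)^{n,N,M}_{jk},W_r\rangle_T$ to the sum of the two brackets, and the elementary inequality $(a+b)^2\le 2(a^2+b^2)$ shows that this sum inherits the same decay in $L^2$.

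The proof then finishes mechanically: Jensen's inequality upgrades the $L^2$ control to an $L^1$ bound of order $N^{(1-2/p)/2}$, and Markov's inequality produces the claimed convergence in probability. The only delicate point—scarcely an obstacle—is that the lemma is proved uniformly in the first \emph{or} the second time argument of $\dd{jk}{\cdot,\cdot}$, so the same estimate applies both to $e(0)$, whose bound involves $\int|\dd{jk}{t,v}|^p\ds t$, and to $e(1)$, whose bound involves $\int|\dd{jk}{v,t}|^p\ds t$. This symmetry of lemma \ref{lem.dp} is what allows the two terms to be handled uniformly, and once noted the argument is completed with no further work.
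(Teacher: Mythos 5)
Your proposal is correct and follows essentially the same route as the paper: bound $\E\big[\langle e(r)^{n,N,M}_{jk},W_{r'}\rangle_T^2\big]$ via lemma \ref{lem.U2.Z2}, Fubini and H\"older to get $KT^{\frac{3p-2}{p}}N\big(\int_0^T|\dd{jk}{t,v}|^p\ds t\big)^{2/p}$, then invoke lemma \ref{lem.dp} and take $p\in(1,2)$ so the bound is $O(N^{1-2/p})\to0$, finishing with Jensen and Markov. Your explicit remark about the symmetry of lemma \ref{lem.dp} in its two time arguments (needed to cover both $e(0)$ and $e(1)$) is a point the paper leaves implicit, but it is consistent with the proof of that lemma, which takes the supremum over the fixed argument $a$.
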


\noindent\textbf{(2)} Second, let's consider
\begin{eqnarray*}
	\big\langle e(0)^{n,N,M}_{jk},e(0)^{n,N,M}_{jk}\big\rangle_T &=& N\int_0^T \widehat{\rho}_{jk}^M(\thb{j}{t})^2\,c_{jj}(t)\, U^{n,N}_{jk}(t)^2\ds t \\
	\big\langle e(0)^{n,N,M}_{jk},e(1)^{n,N,M}_{jk}\big\rangle_T &=& N\int_0^T \widehat{\rho}_{jk}^M(\thb{j}{t})\,c_{jk}(t)\, U^{n,N}_{jk}(t)\,\widetilde{U}^{n,N,M}_{jk}(t)\ds t \\
	\big\langle e(1)^{n,N,M}_{jk},e(1)^{n,N,M}_{jk}\big\rangle_T &=& N\int_0^T c_{kk}(t)\, \widetilde{U}^{n,N,M}_{jk}(t)^2\ds t,
\end{eqnarray*}
in view of (\ref{rep.U-Z}),
\begin{eqnarray*}
	\big\langle e(0)^{n,N,M}_{jk},e(0)^{n,N,M}_{jk}\big\rangle_T &=& 2O(0)^{n,N,M}_{jk,T} + V(0)^{n,N,M}_{jk,T} \\
	\big\langle e(0)^{n,N,M}_{jk},e(1)^{n,N,M}_{jk}\big\rangle_T &=& O(1)^{n,N,M}_{jk,T} + O(2)^{n,N,M}_{jk,T} + V(1)^{n,N,M}_{jk,T} \\
	\big\langle e(1)^{n,N,M}_{jk},e(1)^{n,N,M}_{jk}\big\rangle_T &=& 2O(3)^{n,N,M}_{jk,T} + V(2)^{n,N,M}_{jk,T},
\end{eqnarray*}
where
\begin{eqnarray*}
	O(0)^{n,N,M}_{jk,T} &=& N\int_0^T \widehat{\rho}_{jk}^M(\thb{j}{t})^2\, c_{jj}(t)\, Z^{n,N}_{jk}(t)\ds t \\
	O(1)^{n,N,M}_{jk,T} &=& N\int_0^T \widehat{\rho}_{jk}^M(\thb{j}{t})\, c_{jk}(t)\, \breve{Z}^{n,N}_{jk}(t)\ds t \\
	O(2)^{n,N,M}_{jk,T} &=& N\int_0^T \widehat{\rho}_{jk}^M(\thb{j}{t})\, c_{jk}(t)\, \mathring{Z}^{n,N}_{jk}(t)\ds t \\
	O(3)^{n,N,M}_{jk,T} &=& N\int_0^T c_{kk}(t)\, \widetilde{Z}^{n,N}_{jk}(t)\ds t
\end{eqnarray*}
and
\begin{eqnarray*}
	V(0)^{n,N,M}_{jk,T} &=& \int_0^T \widehat{\rho}_{jk}^M(\thb{j}{t})^2\,c_{jj}(t)\ds t\, \Big[N\int_0^t \dd{jk}{t,u}^2 \,c_{kk}(u)\ds u\Big] \\
	V(1)^{n,N,M}_{jk,T} &=& \int_0^T \widehat{\rho}_{jk}^M(\thb{j}{t})\,c_{jk}(t)\ds t\, \Big[N\int_0^t \dd{jk}{t,u}\,\dd{jk}{u,t}\, \widehat{\rho}_{jk}^M(\thb{j}{u})\, c_{jk}(u)\ds u\Big] \\
	V(2)^{n,N,M}_{jk,T} &=& \int_0^T c_{kk}(t)\ds t\, \Big[N\int_0^t \dd{jk}{u,t}^2\, \widehat{\rho}_{jk}^M(\thb{j}{u})^2\, c_{jj}(u)\ds u\Big].
\end{eqnarray*}

Let consider the asymptotically negligible terms,
\begin{eqnarray*}
	\big| O(0)^{n,N,M}_{jk,T}\big|^2 &=& N^2\int_0^T\int_0^T\widehat{\rho}_{jk}^M(\thb{j}{t})^2\,\widehat{\rho}_{jk}^M(\thb{j}{u})^2\, c_{jj}(t)\,c_{jj}(u)\, Z^{n,N}_{jk}(t)\,Z^{n,N}_{jk}(u)\ds t\ds u \\
	\big|O(1)^{n,N,M}_{jk,T}\big|^2 &=& N^2\int_0^T\int_0^T \widehat{\rho}_{jk}^M(\thb{j}{t})\,\widehat{\rho}_{jk}^M(\thb{j}{u})\, c_{jk}(t)\,c_{jk}(u)\,  \breve{Z}^{n,N}_{jk}(t)\,\breve{Z}^{n,N}_{jk}(u)\ds t\ds u \\
	\big|O(2)^{n,N,M}_{jk,T}\big|^2 &=& N^2\int_0^T\int_0^T \widehat{\rho}_{jk}^M(\thb{j}{t})\,\widehat{\rho}_{jk}^M(\thb{j}{u})\, c_{jk}(t)\,c_{jk}(u)\,  \mathring{Z}^{n,N}_{jk}(t)\,\mathring{Z}^{n,N}_{jk}(u)\ds t\ds u \\
	\big|O(3)^{n,N,M}_{jk,T}\big|^2 &=& N^2\int_0^T\int_0^T c_{kk}(t)\,c_{kk}(u)\,  \widetilde{Z}^{n,N}_{jk}(t)\,\widetilde{Z}^{n,N}_{jk}(u)\ds t\ds u,
\end{eqnarray*}
by (\ref{cond.g}), assumption \ref{a.SU}, lemma \ref{lem.U2.Z2},
\begin{equation*}
	\E\big(\big|O(0)^{n,N,M}_{jk,T}\big|^2\big) %\\ 
	\le KN^2\int_0^T\int_0^T\ds t\ds u\, \Big[\int_0^{t\wedge u} \dd{jk}{t,v}\,\dd{jk}{u,v}\ds v\, \int_0^v \dd{jk}{v,\vartheta}^2\ds \vartheta\Big]
\end{equation*}
then by (\ref{bdd.dd}), 
\begin{equation}
	\E\big(\big|O(0)^{n,N,M}_{jk,T}\big|^2\big) \le KT^{\frac{3p-2}{p}}N^{1-2/p}.
\end{equation}

By similar arguments, we can show the same upper bound applies to $\E\big(\big|O(1)^{n,N,M}_{jk,T}\big|^2\big)$, $\E\big(\big|O(2)^{n,N,M}_{jk,T}\big|^2\big)$ and $\E\big(\big|O(3)^{n,N,M}_{jk,T}\big|^2\big)$ as well. Let $p\in(1,2)$ and use Jensen's inequality and Markov's inequality, we have
\begin{lem}\label{lem.null.bi}
Under the assumptions of theorem \ref{thm.bi}, $\forall j,k=1,\cdots,d$,
\begin{equation*}
	\max_{r=0,1,2,3}O(r)^{n,N,M}_{jk,T} \overset{\mathbb{P}}{\longrightarrow} 0
\end{equation*}
\end{lem}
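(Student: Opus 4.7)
My plan is to exploit the variance bounds on the quadratic variations of $Z^{n,N}_{jk,lm}$, $\breve{Z}^{n,N}_{jk,lm}$, $\mathring{Z}^{n,N}_{jk,lm}$, $\widetilde{Z}^{n,N}_{jk,lm}$ already established in lemma \ref{lem.U2.Z2}, together with the quadratic-integral bound \eqref{bdd.dd} on the scaled Dirichlet kernels, to show $\mathbb{E}\bigl[|O(r)^{n,N,M}_{jk,T}|^2\bigr]$ tends to zero, and then conclude convergence in probability by Markov's inequality. The $r=0$ case has already been laid out in the text preceding the statement, so the real content is replicating that estimate for $r=1,2,3$.

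First, for each $r\in\{1,2,3\}$, I would expand the square $|O(r)^{n,N,M}_{jk,T}|^2$ as a double integral over $[0,T]^2$ of the form $N^2 \int\int f_r(t)\,f_r(u)\,\mathcal{Z}_r(t)\,\mathcal{Z}_r(u)\,\mathrm{d}t\,\mathrm{d}u$, where $f_r$ is a bounded coefficient (by assumption \ref{a.SU} and \eqref{cond.g}, noting $\widehat{\rho}_{jk}^M$ is uniformly bounded since $\rho_{jk}$ is continuous), and $\mathcal{Z}_r$ is one of $\breve{Z},\mathring{Z},\widetilde{Z}$. Taking expectations and applying Fubini together with the covariance bounds of lemma \ref{lem.U2.Z2} reduces everything to an integral of the kernel-product form treated in \eqref{bdd.dd}, but with the roles of the two arguments of $d^{n,N}_{jk}$ possibly swapped (e.g.\ $\dd{jk}{v,t}\,\dd{jk}{v,u}$ instead of $\dd{jk}{t,v}\,\dd{jk}{u,v}$).

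The key observation is that the derivation of \eqref{bdd.dd} only used Fubini, H\"older's inequality, and lemma \ref{lem.dp}, and by symmetry of lemma \ref{lem.dp} in its two arguments (the supremum there runs over $j,k$ and $t$) the same bound $KT^{(3p-2)/p}N^{1-2/p}$ holds after the variable swaps. Hence $\mathbb{E}\bigl[|O(r)^{n,N,M}_{jk,T}|^2\bigr]\le KT^{(3p-2)/p}N^{1-2/p}$ for all $r\in\{0,1,2,3\}$, any $p>1$. Choosing $p\in(1,2)$ gives $1-2/p<0$, so the right-hand side vanishes as $N\to\infty$ under \eqref{cond.NM}. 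Jensen's inequality gives $\mathbb{E}|O(r)^{n,N,M}_{jk,T}|\le (\mathbb{E}|O(r)^{n,N,M}_{jk,T}|^2)^{1/2}\to 0$, and Markov's inequality upgrades this to convergence in probability; combining the four cases via a union bound yields the claim on $\max_{r}O(r)^{n,N,M}_{jk,T}$.

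The main obstacle is bookkeeping: making sure that the $\dd{jk}{\cdot,\cdot}$ factors arising from the four different $Z$-processes can each be put into the shape handled by \eqref{bdd.dd} without losing integrability, and that lemma \ref{lem.dp} applies uniformly in the relevant argument. In particular one must verify that swapping the order of the time arguments in $d^{n,N}_{jk}$ does not spoil the $L^p$ bound — this is immediate from the kernel's definition, since $\dd{jk}{t,u}$ depends only on the difference $\thb{j}{t}-\thb{k}{u}$ and lemma \ref{lem.dp} is stated uniformly over $a\in[0,T]$ for fixed shift. Once that symmetry is noted, the four estimates proceed in essentially identical fashion.
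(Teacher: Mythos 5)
Your proposal is correct and follows essentially the same route as the paper: bound $\E\bigl[|O(r)^{n,N,M}_{jk,T}|^2\bigr]$ via the covariance estimates of lemma \ref{lem.U2.Z2} and the kernel bound (\ref{bdd.dd}), obtain $KT^{(3p-2)/p}N^{1-2/p}$, take $p\in(1,2)$, and conclude by Jensen's and Markov's inequalities. The only detail you add beyond the paper's ``by similar arguments'' is the explicit check that swapping the time arguments of $d^{n,N}_{jk}$ is harmless because $\dd{jk}{t,u}=\dd{kj}{u,t}$ and lemma \ref{lem.dp} is uniform over $j,k$ and the fixed argument, which is a correct and worthwhile observation.
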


Now, let's consider the terms which contribute to the asymptotic variance. By (\ref{approximation}) and lemma \ref{lem.dd}, we have the following lemma:
\begin{lem}\label{lem.AVAR.bi}
Under the assumptions of theorem \ref{thm.bi}, $\forall j,k=1,\cdots,d$,
\begin{eqnarray*}
	V(0)^{n,N,M}_{jk,T} &\overset{\mathbb{P}}{\longrightarrow}& \int_0^T \rho_{jk}(t)^2\, \tilde{\theta}_{jk,jk}(t)\, c_{jj}(t)\,c_{kk}(t) \ds t \\
	V(1)^{n,N,M}_{jk,T} &\overset{\mathbb{P}}{\longrightarrow}& \int_0^T \rho_{jk}(t)^2\, \check{\theta}_{jk,jk}(t)\, c_{jk}(t)^2\ds t \\
	V(2)^{n,N,M}_{jk,T} &\overset{\mathbb{P}}{\longrightarrow}& \int_0^T \rho_{jk}(t)^2\, \grave{\theta}_{jk,jk}(t)\, c_{jj}(t)\,c_{kk}(t) \ds t.
\end{eqnarray*}
\end{lem}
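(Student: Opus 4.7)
The plan is to prove each of the three convergences by (i) replacing the Fej\'er-smoothed derivative $\widehat{\rho}_{jk}^M(\thb{j}{\cdot})$ by the continuous target $\rho_{jk}(\cdot)$, and (ii) invoking lemma \ref{lem.dd} with an appropriate pair of continuous functions $f_0, f_1$.

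For step (i), the uniform Fej\'er approximation (\ref{approximation}) together with the continuity of $\rho_{jk}(t)=\partial g(c_{jk}(t))$ inherited from (\ref{cond.g}) and assumption \ref{a.V} yields
\[
\sup_{t\in[0,T]}\big|\widehat{\rho}_{jk}^M(\thb{j}{t}) - \rho_{jk}(t)\big| \le K\bigl(\omega_\rho(M^{-1}) + \omega_\rho(\Delta(n))\bigr) \overset{\mathbb{P}}{\longrightarrow} 0.
\]
To transfer this uniform convergence inside each $V(r)^{n,N,M}_{jk,T}$, I will bound the inner brackets of the form $N\int_0^t \dd{jk}{t,u}^2\ds u$ and $N\int_0^t \dd{jk}{t,u}\dd{jk}{u,t}\ds u$ by combining lemma \ref{lem.dp} (with $p=2$) and Cauchy--Schwarz, together with assumption \ref{a.SU} which controls the bounded volatility factors. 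This renders those brackets $O_p(1)$ uniformly in $t$, so that replacing $\widehat{\rho}_{jk}^M(\thb{j}{\cdot})$ by $\rho_{jk}(\cdot)$ in each $V(r)$ introduces only an $o_p(1)$ error.

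For step (ii), I match the resulting quantity to lemma \ref{lem.dd} by inspection of the outer/inner argument pattern of the two $\dd{jk}$ factors (with outer variable $t$ and inner variable $u$). In $V(0)$ the pattern is $\dd{jk}{t,u}\cdot\dd{jk}{t,u}$ (both outer--inner), giving the limit $\int_0^T \rho_{jk}(t)^2\, \tilde{\theta}_{jk,jk}(t)\, c_{jj}(t)\,c_{kk}(t)\ds t$ with the choice $f_0(t)=\rho_{jk}(t)^2 c_{jj}(t)$ and $f_1(u)=c_{kk}(u)$. In $V(1)$ the pattern is $\dd{jk}{t,u}\cdot\dd{jk}{u,t}$; under the symmetry $jk=lm$ the integrand is invariant under swapping the two factors, so $\acute{\theta}_{jk,jk}=\check{\theta}_{jk,jk}$, and applying lemma \ref{lem.dd} with $f_0(t)=f_1(t)=\rho_{jk}(t)c_{jk}(t)$ gives $\int_0^T \rho_{jk}(t)^2\,\check{\theta}_{jk,jk}(t)\,c_{jk}(t)^2\ds t$. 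In $V(2)$ the pattern is $\dd{jk}{u,t}\cdot\dd{jk}{u,t}$ (both inner--outer), yielding $\grave{\theta}_{jk,jk}$ with $f_0(t)=c_{kk}(t)$ and $f_1(u)=\rho_{jk}(u)^2 c_{jj}(u)$. These immediately give the claimed limits.

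The main obstacle is step (i): the naive pointwise bound on $N\dd{jk}{t,u}^2$ is useless because this quantity is of order $N$ near the diagonal, so I must rely on integrated $L^p$ bounds from lemma \ref{lem.dp} combined with the uniform boundedness of $c_{jj}, c_{kk}$ under assumption \ref{a.SU} to obtain a uniform-in-$t$ bound on the inner brackets. A secondary technical point is that lemma \ref{lem.dd} is stated for deterministic continuous $f_0, f_1$, whereas $\rho_{jk}$ and $c_{lm}$ are random continuous sample paths; this is handled by applying the lemma path-by-path on the full-probability event that the sample paths are continuous, and then lifting the in-probability conclusion of assumption \ref{a.dd} through a subsequence-almost-sure argument.
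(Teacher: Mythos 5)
Your proposal is correct and follows essentially the same route as the paper, which proves this lemma in one line by invoking the Fej\'er approximation bound (\ref{approximation}) to replace $\widehat{\rho}_{jk}^M(\thb{j}{\cdot})$ with $\rho_{jk}(\cdot)$ and then applying lemma \ref{lem.dd} with the appropriate $f_0,f_1$; your extra care in bounding the inner brackets uniformly via lemma \ref{lem.dp} and in noting $\acute{\theta}_{jk,jk}=\check{\theta}_{jk,jk}$ for the $V(1)$ term simply fills in details the paper leaves implicit.
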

Theorem \ref{thm.bi} then follows from (\ref{decomp.error_jk}), (\ref{null.o(0)}), (\ref{null.o(1)}), and lemma \ref{lem.stable}, \ref{lem.null.bi}, \ref{lem.AVAR.bi}.

\section{Proof of theorem 3, proposition 3, proposition 4}\label{apdx:thm3}
We can write
\begin{eqnarray}\label{decomp.multi}
	N^{1/2}\big[\widehat{S}(g)^{n}_T - S(g)_T\big] &=& \overline{S}(0)^{n,N,M,B}_T + \overline{S}(1)^{n,N,M}_T + \overline{S}(2)^{n,N,M}_T \\
	N^{1/2}\big[\widehat{S}(g)^{n}_T - \underline{S}(g)^{n,N}_T\big] &=& \overline{S}(0)^{n,N,M,B}_T + \overline{\underline{S}}(1)^{n,N,M}_T + \overline{\underline{S}}(2)^{n,N,M}_T, \nonumber
\end{eqnarray}
where
\begin{eqnarray*}
	\overline{S}(0)^{n,N,M,B}_T &\coloneqq& N^{1/2} \left[\sum_{h=1}^{B}g\big(\chat{}{hT/B}\big)\,T/B - \int_{0}^{T} g\big(\chat{}{t}\big) \ds t \right] \\
	%&& \hspace{46mm} - N^{1/2}\Bigg(\int_0^{t_L} + \int_{t_{B-L}}^T\Bigg) g\big(\chat{}{t}\big)\ds t \\
	\overline{S}(1)^{n,N,M}_T &\coloneqq& N^{1/2} \int_0^T \Big\{g\big(\chat{}{t}\big) - g(c(t)) - \sum_{j,k=1}^d\partial_{jk} g(c(t))\Big[\chat{jk}{t} - c_{jk}(t)\Big] \Big\}\ds t \\
	\overline{S}(2)^{n,N,M}_T &\coloneqq& \sum_{j,k=1}^d N^{1/2} \int_0^T \partial_{jk} g(c(t)) \Big[\chat{jk}{t} - c_{jk}(t)\Big]\ds t,
\end{eqnarray*}
and
\begin{eqnarray*}
	\overline{\underline{S}}(1)^{n,N,M}_T &\coloneqq& N^{1/2} \int_0^T \Big\{g\big(\chat{}{t}\big) - g\big(\underline{c}^{n,N}(t)\big) - \sum_{j,k=1}^d\partial_{jk} g\big(\underline{c}^{n,N}(t)\big)\Big[\chat{jk}{t} - \underline{c}^{n,N}_{jk}(t)\Big] \Big\}\ds t \\
	\overline{\underline{S}}(2)^{n,N,M}_T &\coloneqq& \sum_{j,k=1}^d N^{1/2} \int_0^T \partial_{jk}g\big(\underline{c}^{n,N}(t)\big) \Big[\chat{jk}{t} - \dd{jk}{t,t}\,c_{jk}(t)\Big]\ds t,
\end{eqnarray*}

Use an argument similar to that on $\overline{S}(0)^{n,N,M,B}_{jk,T}$ in appendix \ref{apdx:thm2}, it follows
\begin{equation}\label{null.S(0)}
	\big\|\overline{S}(0)^{n,N,M,B}_T\big\| \overset{\mathbb{P}}{\longrightarrow} 0.
\end{equation}

By (\ref{cond.g}), 
\begin{eqnarray*}
	\Big\|g\big(\chat{}{t}\big) - g(c(t)) - \sum_{j,k}\partial_{jk} g(c(t))\big[\chat{jk}{t} - c_{jk}(t)\big]\Big\| &=& O_p\big(\|\chat{}{t} - c(t)\|^2\big) \\
	\Big\|g\big(\chat{}{t}\big) - g\big(\underline{c}(t)\big) - \sum_{j,k}\partial_{jk} g\big(\underline{c}^{n,N}(t)\big)\big[\chat{jk}{t} - \underline{c}^{n,N}_{jk}(t)\big]\Big\| &=& O_p\big(\|\chat{}{t} - \underline{c}^{n,N}(t)\|^2\big),
\end{eqnarray*}
therefore
\begin{eqnarray*}
	\big\|\overline{S}(1)^{n,N,M}_T\big\| &\le& KTN^{1/2}\sup_{t\in[0,T]}\big\|\chat{}{t} - c(t)\big\|^2 \\
	\big\|\overline{\underline{S}}(1)^{n,N,M}_T\big\| &\le& KTN^{1/2}\sup_{t\in[0,T]}\big\|\chat{}{t} - \underline{c}^{n,N}(t)\big\|^2.
\end{eqnarray*}
According to proposition \ref{prop.msr}, under conditions of theorem \ref{thm.multi},
	\[ \E\big\|\overline{S}(1)^{n,N,M}_T\big\| \le KT \frac{M}{N^{1/2}}. \]
Notice that
\begin{equation*}
	\chat{jk}{t} - \underline{c}_{jk}(t) = Q(t,0)^{n,N,M}_{jk,T} + Q(t,1)^{n,N,M}_{jk,T} + \underline{\Omega}(t,1)^{n,N,M}_{jk,T} + \underline{\Omega}(t,2)^{N,M}_{jk,T},
\end{equation*}
where $Q(t,0)^{n,N,M}_{jk,T}$ and $Q(t,1)^{n,N,M}_{jk,T}$ are defined by (\ref{decomp.chat.error}) and
\begin{eqnarray*}
	\underline{\Omega}(t,1)^{n,N,M}_{jk,T} &=& \frac{1}{T}\int_0^T \Big[F^M\Big(\frac{t-\thb{j}{u}}{T}\Big) - F^M\Big(\frac{t-u}{T}\Big)\Big]\,\dd{jk}{u,u}\,c_{jk}(u)\ds u \\
	\underline{\Omega}(t,2)^{N,M}_{jk,T} &=& \frac{1}{T}\int_0^T F^M\Big(\frac{t-u}{T}\Big)\,\dd{jk}{u,u}\,c_{jk}(u)\ds u - \dd{jk}{t,t}\,c_{jk}(t),
\end{eqnarray*}
by a similar proof to that of proposition \ref{prop.msr},
	\[ \E\big\|\overline{\underline{S}}(1)^{n,N,M}_T\big\| \le KT \frac{M}{N^{1/2}}. \]

Thus by (\ref{cond.NM}) and Markov's inequality,
\begin{equation}\label{null.S(1)}
	\big\|\overline{S}(1)^{n,N,M}_T\big\| + \big\|\overline{\underline{S}}(1)^{n,N,M}_T\big\| \overset{\mathbb{P}}{\longrightarrow} 0.
\end{equation}

\subsection{stable convergence}
Let $\rho_{jk}(t) = \partial_{jk}g(c(t))$ and $\underline{\rho}_{jk}(t) = \partial_{jk}g\big(\underline{c}^{n,N}(t)\big)$, we need to study
\begin{eqnarray*}
	\overline{S}(2)^{n,N,M}_T &=& \sum_{j,k=1}^d N^{1/2} \int_0^T\rho_{jk}(t)\Big[\overline{\chat{jk}{t}}-c_{jk}(t)\Big]\ds t \\
	\overline{\underline{S}}(2)^{n,N,M}_T &=& \sum_{j,k=1}^d N^{1/2} \int_0^T\underline{\rho}_{jk}(t)\Big[\overline{\chat{jk}{t}} - \dd{jk}{t,t}\,c_{jk}(t)\Big]\ds t.
\end{eqnarray*}

Based on (\ref{decomp.error_jk}),
\begin{equation*}
	\overline{S}(2)^{n,N,M}_T = \sum_{j,k=1}^d \Big[o(0)^{n,M}_{jk,T} + o(1)^{n,N,M}_{jk,T} + e(0)^{n,N,M}_{jk,T} + e(1)^{n,N,M}_{jk,T}\Big],
\end{equation*}
where $o(0)^{n,M}_{jk,T}$, $o(1)^{n,N,M}_{jk,T}$, $e(0)^{n,N,M}_{jk,T}$, $e(1)^{n,N,M}_{jk,T}$ are defined in (\ref{def.es}) and (\ref{def.os}).

Similarly,
\begin{equation*}
	\overline{\underline{S}}(2)^{n,N,M}_T = \sum_{j,k=1}^d \Big[\underline{o}(0)^{n,M}_{jk,T} + \underline{e}(0)^{n,N,M}_{jk,T} + \underline{e}(1)^{n,N,M}_{jk,T}\Big].
\end{equation*}
where
\begin{eqnarray*}
	\underline{o}(0)^{n,M}_{jk,T} &=& N^{1/2} \sum_{h=1}^{n_j}\int_{I^j_h} \big[\underline{\widehat{\rho}}^M_{jk}\big(\tau^j_h\big) - \underline{\rho}_{jk}(t)\big]\dd{jk}{t,t}\,c_{jk}(t)\ds t \\
	\underline{e}(0)^{n,N,M}_{jk,T} &=& N^{1/2} \int_0^T \underline{\widehat{\rho}}^M_{jk}(\thb{j}{t})\, \sigma_{j\cdot}(t) \ds W(t) \int_0^t \dd{jk}{t,u}\,\sigma_{k\cdot}(u) \ds W(u) \nonumber\\
	\underline{e}(1)^{n,N,M}_{jk,T} &=& N^{1/2} \int_0^T \sigma_{k\cdot}(t) \ds W(t) \int_0^t \underline{\widehat{\rho}}^M_{jk}(\thb{j}{u})\, \dd{jk}{u,t}\,\sigma_{j\cdot}(u) \ds W(u).
\end{eqnarray*}

Here we show stable convergence of $\overline{S}(2)^{n,N,M}_T$. The asymptotic analysis of $\overline{\underline{S}}(2)^{n,N,M}_T$ goes along similar lines.

By (\ref{d-1}),
\begin{equation*}
	o(1)^{n,N,M}_{jk,T} = -\frac{\pi^2}{6T^2} (2N+1)^2N^{1/2}\int_0^T \widehat{\rho}_{jk}^M(\thb{j}{t})\, c_{jk}(t)\, \big[\thb{j}{t}-\thb{k}{t}\big]^2 \ds t + O_p\big(N^{9/2}\,\underline{n}^{-4}\big),
\end{equation*}
then If $N=\lfloor\kappa\underline{n}^{4/5}\rfloor$, by (\ref{approximation}) and assumption \ref{a.theta},
\begin{equation}\label{limit.o(1)}
	o(1)^{n,N,M}_{jk,T} \overset{\mathbb{P}}{\longrightarrow} -\frac{2\pi^2\kappa^{5/2}}{3T^2} \int_0^T \partial_{jk}g\big(c(t)\big)\, c_{jk}(t)\, \varrho_{jk}(t) \ds t.
\end{equation}

Now, let's study $\Psi^{n,N,M}_T \coloneqq \sum_{j,k=1}^d \big[e(0)^{n,N,M}_{jk,T} + e(1)^{n,N,M}_{jk,T}\big]$. Because of lemma \ref{lem.stable}, it remains to study the limit of the bracket $\big\langle \Psi^{n,N,M},\Psi^{n,N,M}\big\rangle_T$ in probability,
\begin{multline*}
	\big\langle \Psi^{n,N,M},\Psi^{n,N,M}\big\rangle_T = \sum_{j,k,l,m=1}^d \Big[\big\langle e(0)^{n,N,M}_{jk},e(0)^{n,N,M}_{lm}\big\rangle_T + \big\langle e(0)^{n,N,M}_{jk},e(1)^{n,N,M}_{lm}\big\rangle_T \\
	+ \big\langle e(1)^{n,N,M}_{jk},e(0)^{n,N,M}_{lm}\big\rangle_T + \big\langle e(1)^{n,N,M}_{jk},e(0)^{n,N,M}_{lm}\big\rangle_T\Big],
\end{multline*}
and by (\ref{rep.es}),
\begin{eqnarray*}
	\big\langle e(0)^{n,N,M}_{jk},e(0)^{n,N,M}_{lm} \big\rangle_T &=& N\int_0^T \widehat{\rho}_{jk}^M(\thb{j}{t})\,\widehat{\rho}_{lm}^M(\thb{l}{t})\, c_{jl}(t)\, U^{n,N}_{jk}(t)\,U^{n,N}_{lm}(t) \ds t \\
	\big\langle e(0)^{n,N,M}_{jk},e(1)^{n,N,M}_{lm} \big\rangle_T &=& N\int_0^T \widehat{\rho}_{jk}^M(\thb{j}{t})\, c_{jm}(t)\, U^{n,N}_{jk}(t)\,\widetilde{U}^{n,N,M}_{lm}(t) \ds t \\
	\big\langle e(1)^{n,N,M}_{jk},e(0)^{n,N,M}_{lm} \big\rangle_T &=& N\int_0^T \widehat{\rho}_{lm}^M(\thb{l}{t})\, c_{kl}(t)\, \widetilde{U}^{n,N,M}_{jk}(t)\,U^{n,N}_{lm}(t) \ds  t \\
	\big\langle e(1)^{n,N,M}_{jk},e(1)^{n,N,M}_{lm} \big\rangle_T &=& N\int_0^T c_{km}(t)\, \widetilde{U}^{n,N}_{jk}(t)\,\widetilde{U}^{n,N,M}_{lm}(t) \ds  t,
\end{eqnarray*}
so by (\ref{rep.U-Z}),
\begin{equation}\label{rep.AVAR.multi}
	\big\langle \Psi^{n,N,M},\Psi^{n,N,M}\big\rangle_T = \sum_{j,k,l,m=1}^d\left[\sum_{r=0}^3O(r)^{n,N,M}_{jk,lm,T} + \sum_{r=0}^3 V(r)^{n,N,M}_{jk,lm,T} \right],
\end{equation}
where
\begin{eqnarray*}
	O(0)^{n,N,M}_{jk,lm,T} &=& N\int_0^T \widehat{\rho}_{jk}^M(\thb{j}{t})\,\widehat{\rho}_{lm}^M(\thb{l}{t})\, c_{jl}(t)\, \big[Z^{n,M}_{jk,lm}(t) + Z^{n,M}_{lm,jk}(t)\big] \ds t \\
	O(1)^{n,N,M}_{jk,lm,T} &=& N\int_0^T \Big[\widehat{\rho}_{jk}^M(\thb{j}{t})\,c_{jm}(t)\,\breve{Z}^{n,N}_{jk,lm}(t) + \widehat{\rho}_{lm}^M(\thb{l}{t})\,c_{kl}(t)\,\breve{Z}^{n,N}_{lm,jk}(t)\Big] \ds t \\
	O(2)^{n,N,M}_{jk,lm,T} &=& N\int_0^T \Big[\widehat{\rho}_{jk}^M(\thb{j}{t})\,c_{jm}(t)\,\mathring{Z}^{n,N}_{jk,lm}(t) + \widehat{\rho}_{lm}^M(\thb{l}{t})\,c_{kl}(t)\,\mathring{Z}^{n,N}_{lm,jk}(t)\Big] \ds  t \\
	O(3)^{n,N,M}_{jk,lm,T} &=& N\int_0^T c_{km}(t)\, \big[\widetilde{Z}^{n,M}_{jk,lm}(t) + \widetilde{Z}^{n,M}_{lm,jk}(t)\big] \ds t ,
\end{eqnarray*}
and
\begin{eqnarray*}
	 V(0)^{n,N,M}_{jk,lm,T} &=& \int_0^T \widehat{\rho}_{jk}^M(\thb{j}{t})\,\widehat{\rho}_{lm}^M(\thb{l}{t})\, c_{jl}(t) \ds t\, \Big[N\int_0^t\, \dd{jk}{t,u}\,\dd{lm}{t,u}\, c_{km}(u) \ds u\Big]\\
	 V(1)^{n,N,M}_{jk,lm,T} &=& \int_0^T \widehat{\rho}_{jk}^M(\thb{j}{t})\, c_{jm}(t) \ds t\, \Big[N\int_0^t \dd{jk}{t,u}\,\dd{lm}{u,t}\, \widehat{\rho}_{lm}^M(\thb{l}{u})\, c_{kl}(u)\ds u\Big] \\
	 V(2)^{n,N,M}_{jk,lm,T} &=& \int_0^T \widehat{\rho}_{lm}^M(\thb{l}{t})\, c_{kl}(t) \ds  t\, \Big[N\int_0^t \dd{jk}{u,t}\,\dd{lm}{t,u}\, \widehat{\rho}_{jk}^M(\thb{j}{u})\, c_{jm}(u) \ds u\Big] \\
	 V(3)^{n,N,M}_{jk,lm,T} &=& \int_0^T c_{km}(t) \ds  t\, \Big[N\int_0^t \dd{jk}{u,t}\,\dd{lm}{u,t}\, \widehat{\rho}_{jk}^M(\thb{j}{u})\,\widehat{\rho}_{lm}^M(\thb{l}{u})\, c_{jl}(u)\ds u\Big]. \\
\end{eqnarray*}

To show the asymptotic negligibility of $O(r)^{n,N,M}_{jk,lm,T}$ for $r=0,\cdots,3$, by symmetry, it suffices to study the following terms:
\begin{eqnarray*}
	\phi(0)^{n,N,M}_{jk,lm,T} &\coloneqq& N\int_0^T \widehat{\rho}_{jk}^M(\thb{j}{t})\,\widehat{\rho}_{lm}^M(\thb{l}{t})\, c_{jl}(t)\, Z^{n,N}_{jk,lm}(t) \ds t \\
	\phi(1)^{n,N,M}_{jk,lm,T} &\coloneqq& N\int_0^T \widehat{\rho}_{jk}^M(\thb{j}{t})\,c_{jm}(t)\,\breve{Z}^{n,N}_{jk,lm}(t) \ds t \\
	\phi(2)^{n,N,M}_{jk,lm,T} &\coloneqq& N\int_0^T \widehat{\rho}_{jk}^M(\thb{j}{t})\,c_{jm}(t)\,\mathring{Z}^{n,N}_{jk,lm}(t) \ds  t \\
	\phi(3)^{n,N,M}_{jk,lm,T} &\coloneqq& N\int_0^T c_{km}(t)\, \widetilde{Z}^{n,N}_{jk,lm}(t) \ds t .
\end{eqnarray*}
Note
\begin{eqnarray*}
	\big|\phi(0)^{n,N,M}_{jk,lm,T}\big|^2 &=& N^2\int_0^T\int_0^T \widehat{\rho}_{jk}^M(\thb{j}{t})\,\widehat{\rho}_{lm}^M(\thb{l}{t})\,\widehat{\rho}_{jk}^M(\thb{j}{u})\,\widehat{\rho}_{lm}^M(\thb{l}{u}) \\ 
	&&\hspace{5.8cm} c_{jl}(t)\,c_{jl}(u)\, Z^{n,N}_{jk,lm}(t)\,Z^{n,N}_{jk,lm}(u)\ds t\ds u \\
	\big|\phi(1)^{n,N,M}_{jk,lm,T}\big|^2 &=& N^2\int_0^T\int_0^T \widehat{\rho}_{jk}^M(\thb{j}{t})\,\widehat{\rho}_{jk}^M(\thb{j}{u})\, c_{jm}(t)\,c_{jm}(u)\,  \breve{Z}^{n,N}_{jk,lm}(t)\,\breve{Z}^{n,N}_{jk,lm}(u)\ds t\ds u \\
	\big|\phi(2)^{n,N,M}_{jk,lm,T}\big|^2 &=& N^2\int_0^T\int_0^T \widehat{\rho}_{jk}^M(\thb{j}{t})\,\widehat{\rho}_{jk}^M(\thb{j}{u})\, c_{jm}(t)\,c_{jm}(u)\,  \mathring{Z}^{n,N}_{jk,lm}(t)\,\mathring{Z}^{n,N}_{jk,lm}(u)\ds t\ds u \\
	\big|\phi(3)^{n,N,M}_{jk,lm,T}\big|^2 &=& N^2\int_0^T\int_0^T c_{km}(t)\,c_{km}(u)\,  \widetilde{Z}^{n,N}_{jk,lm}(t)\,\widetilde{Z}^{n,N}_{jk,lm}(u)\ds t\ds u,
\end{eqnarray*}
by (\ref{cond.g}), assumption \ref{a.SU}, lemma \ref{lem.U2.Z2},
\begin{equation*}
	\E\big(\big|\phi(0)^{n,N,M}_{jk,lm,T}\big|^2\big) %\\ 
	\le KN^2\int_0^T\int_0^T\ds t\ds u\, \Big[\int_0^{t\wedge u} \dd{jk}{t,v}\,\dd{jk}{u,v}\ds v \int_0^v \dd{lm}{v,\vartheta}^2\ds \vartheta\Big],
\end{equation*}
by (\ref{bdd.dd}), 
\begin{equation}
	\E\big(\big|\phi(0)^{n,N,M}_{jk,lm,T}\big|^2\big) \le KT^{\frac{3p-2}{p}}N^{1-2/p}.
\end{equation}

By similar arguments, we can show the same upper bound also applies to $\E\big(\big|\phi(1)^{n,N,M}_{jk,lm,T}\big|^2\big)$, $\E\big(\big|\phi(2)^{n,N,M}_{jk,lm,T}\big|^2\big)$, $\E\big(\big|\phi(3)^{n,N,M}_{jk,lm,T}\big|^2\big)$. Thus by taking $p\in(1,2)$ and using Jensen's inequality and Markov's inequality, we can prove that $\phi(r)^{n,N,M}_{jk,lm,T},\,r=0,1,2,3$ all converge to 0 in probability.
\begin{lem}\label{lem.null.multi}
Under the assumptions of theorem \ref{thm.multi}, $\forall j,k,l,m=1,\cdots,d$,
\begin{equation*}
	\max_{r=0,1,2,3}\phi(r)^{n,N,M}_{jk,lm,T} \overset{\mathbb{P}}{\longrightarrow} 0.
\end{equation*}
\end{lem}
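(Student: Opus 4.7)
The plan is to establish the $L^2$-bound $\E\bigl|\phi(r)^{n,N,M}_{jk,lm,T}\bigr|^2 \to 0$ for each $r=0,1,2,3$; convergence in probability of each $\phi(r)$ then follows via Jensen's and Markov's inequalities, and since $(j,k,l,m,r)$ ranges over a finite set the maximum also converges to zero. The four variants have a parallel structure (they differ only in which of $Z$, $\breve{Z}$, $\mathring{Z}$, $\widetilde{Z}$ appears, i.e.\ in which argument of the Dirichlet kernel carries the running variable), so I would write out $\phi(0)$ carefully and observe that the other three follow by the identical argument after swapping the arguments of $\dd{\cdot}{\cdot,\cdot}$, an operation under which lemma \ref{lem.dp} is symmetric.

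First I would square $\phi(0)^{n,N,M}_{jk,lm,T}$ to obtain a double integral on $[0,T]^2$ whose integrand is a product of four factors $\widehat{\rho}^M$, two entries of $c$, and $Z^{n,N}_{jk,lm}(t)\,Z^{n,N}_{jk,lm}(u)$. Assumption \ref{a.SU} together with (\ref{cond.g}) bounds the deterministic-looking factors uniformly, so after Fubini and the second-moment inequality from lemma \ref{lem.U2.Z2} the task reduces to controlling
\begin{equation*}
N^2\int_0^T\!\!\int_0^T \int_0^{t\wedge u}\dd{jk}{t,v}\,\dd{jk}{u,v}\,\ds v \int_0^v \dd{lm}{v,\vartheta}^2\,\ds\vartheta\,\ds t\,\ds u,
\end{equation*}
which is exactly the quantity treated in (\ref{bdd.dd}): one first uses $N\int_0^v \dd{lm}{v,\vartheta}^2\,\ds\vartheta \le K$ from lemma \ref{lem.dp}, then exchanges the order of integration in $(t,u,v)$ and applies H\"older to expose the $L^p$-norm $\int_0^T |\dd{jk}{t,v}|^p\,\ds t \le K_p N^{-1}$ twice. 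The resulting bound $K T^{(3p-2)/p}N^{1-2/p}$, valid for any $p>1$, vanishes as $N\to\infty$ once one chooses any $p\in(1,2)$, since then the exponent $1-2/p$ is strictly negative.

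For $\phi(1),\phi(2),\phi(3)$ the second-moment bounds from lemma \ref{lem.U2.Z2} take exactly the same shape after an appropriate relabeling of the two arguments of $\dd{jk}{\cdot,\cdot}$ and $\dd{lm}{\cdot,\cdot}$, and the $L^p$-control of lemma \ref{lem.dp} is symmetric in these arguments, so the same $N^{1-2/p}$ bound applies verbatim. The main technical obstacle is the Fubini/H\"older bookkeeping behind (\ref{bdd.dd}) --- the three integrals in $v$, $t$, $u$ must be interchanged in just the right order to isolate the $L^p$-norm of the scaled Dirichlet kernel, and the power $N^{1-2/p}$ depends delicately on peeling off the $\vartheta$-integral against $\dd{lm}{\cdot,\cdot}^2$ before handling the $\dd{jk}{\cdot,\cdot}$-integrals; but this is precisely what has already been arranged in appendix \ref{apdx:trig}, so the proof reduces to citing lemma \ref{lem.U2.Z2} and the estimate (\ref{bdd.dd}) and invoking Markov's inequality.
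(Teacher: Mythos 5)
Your proposal is correct and follows essentially the same route as the paper: square each $\phi(r)$, bound the $\widehat{\rho}^M$ and $c$ factors via assumption \ref{a.SU} and (\ref{cond.g}), reduce via lemma \ref{lem.U2.Z2} to the kernel integral controlled by (\ref{bdd.dd}), obtain the bound $KT^{(3p-2)/p}N^{1-2/p}$, and conclude with $p\in(1,2)$ via Jensen's and Markov's inequalities. Your remark that the other three terms follow by relabeling the arguments of the scaled Dirichlet kernels, using the symmetry of lemma \ref{lem.dp}, matches the paper's "by similar arguments" step.
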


By (\ref{approximation}) and lemma \ref{lem.dd}, we have
\begin{lem}\label{lem.AVAR.multi}
Under the assumptions of theorem \ref{thm.multi}, $\forall j,k,l,m=1,\cdots,d$,
\begin{eqnarray*}
	V(0)^{n,N,M}_{jk,lm,T} &\overset{\mathbb{P}}{\longrightarrow}& \int_0^T \rho_{jk}(t)\,\rho_{lm}(t)\, \tilde{\theta}_{jk,lm}(t)\, c_{jl}(t)\,c_{km}(t) \ds t \nonumber\\
	V(1)^{n,N,M}_{jk,lm,T} &\overset{\mathbb{P}}{\longrightarrow}& \int_0^T \rho_{jk}(t)\,\rho_{lm}(t)\, \acute{\theta}_{jk,lm}(t)\, c_{jm}(t)\,c_{kl}(t) \ds t \nonumber\\
	V(2)^{n,N,M}_{jk,lm,T} &\overset{\mathbb{P}}{\longrightarrow}& \int_0^T \rho_{jk}(t)\,\rho_{lm}(t)\, \check{\theta}_{jk,lm}(t)\, c_{jm}(t)\,c_{kl}(t) \ds t \nonumber\\
	V(3)^{n,N,M}_{jk,lm,T} &\overset{\mathbb{P}}{\longrightarrow}& \int_0^T \rho_{jk}(t)\,\rho_{lm}(t)\, \grave{\theta}_{jk,lm}(t)\, c_{jl}(t)\,c_{km}(t) \ds t.
\end{eqnarray*}
\end{lem}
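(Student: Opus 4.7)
The plan is to reduce each of the four convergence statements to a direct application of lemma \ref{lem.dd}, after replacing the Fej\'er-approximated multipliers $\widehat{\rho}^M_{jk}(\thb{j}{\cdot})$, $\widehat{\rho}^M_{lm}(\thb{l}{\cdot})$ by the continuous functions $\rho_{jk}(\cdot)=\partial_{jk}g(c(\cdot))$, $\rho_{lm}(\cdot)=\partial_{lm}g(c(\cdot))$.

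First, I would control the replacement error. By condition (\ref{cond.g}) and assumption \ref{a.V}, $\rho_{jk}$ and $\rho_{lm}$ inherit $\alpha$-H\"older continuity on $[0,T]$, so (\ref{approximation}) gives $\sup_{t\in[0,T]}|\widehat{\rho}^M_{jk}(t)-\rho_{jk}(t)| = O(M^{-\alpha})$. Combined with $|\rho_{jk}(\thb{j}{t})-\rho_{jk}(t)|\le K\Delta(n)^\alpha$ (and similarly for $lm$), the total pointwise replacement error is uniformly $O(M^{-\alpha}+\Delta(n)^\alpha)=o(1)$ under (\ref{cond.NM}). To show this pointwise error is negligible once inserted into the double integral, I would bound the normalizing factor
\[ N\int_0^T\ds t\int_0^t |\dd{jk}{t,u}|\,|\dd{lm}{t,u}|\ds u \le N\Big(\int_0^T\ds t\int_0^t \dd{jk}{t,u}^2\ds u\Big)^{1/2} \Big(\int_0^T\ds t\int_0^t \dd{lm}{t,u}^2\ds u\Big)^{1/2} \le KT, \]
by Cauchy--Schwarz together with lemma \ref{lem.dp} taken with $p=2$. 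The three companion bounds, with $\dd{\cdot}{t,u}$ swapped for $\dd{\cdot}{u,t}$ in one or both factors, follow by an identical argument.

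Second, with the multipliers replaced, each $V(r)^{n,N,M}_{jk,lm,T}$ sits in the exact format of lemma \ref{lem.dd}. For $V(0)$, take $f_0(t)=\rho_{jk}(t)\rho_{lm}(t)c_{jl}(t)$ and $f_1(u)=c_{km}(u)$, both in $C([0,T])$, and read off the limit $\int_0^T \tilde{\theta}_{jk,lm}(t)\,\rho_{jk}(t)\rho_{lm}(t)c_{jl}(t)c_{km}(t)\ds t$. The reductions for $V(1),V(2),V(3)$ are the same, differing only in which kernel factor has swapped arguments: the pairs are $(\dd{jk}{t,u},\dd{lm}{u,t})$, $(\dd{jk}{u,t},\dd{lm}{t,u})$, $(\dd{jk}{u,t},\dd{lm}{u,t})$, which via lemma \ref{lem.dd} yield the limits involving $\acute{\theta}_{jk,lm}$, $\check{\theta}_{jk,lm}$, $\grave{\theta}_{jk,lm}$ respectively, together with the appropriate placement of $c_{jm}c_{kl}$ or $c_{jl}c_{km}$.

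The main obstacle is the first step: the growing factor $N$ in front of the Dirichlet products makes it a priori unclear that the uniform approximation error $O(M^{-\alpha}+\Delta(n)^\alpha)$ translates into a negligible contribution inside the integral. The key is that lemma \ref{lem.dp} supplies $N\int_0^T\dd{jk}{t,u}^2\ds u \le K$ uniformly in $t$, so the normalizing $L^1$ mass of $N|\dd{jk}\dd{lm}|$ stays bounded even as $N\to\infty$; this is precisely what decouples the deterministic approximation error from the stochastic kernel limit. Once this decoupling is established, everything else is a bookkeeping exercise and lemma \ref{lem.dd} finishes all four assertions.
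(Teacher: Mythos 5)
Your proposal is correct and follows essentially the same route as the paper, which proves this lemma by combining the Fej\'er approximation bound (\ref{approximation}) with lemma \ref{lem.dd}; your use of lemma \ref{lem.dp} with $p=2$ and Cauchy--Schwarz to show that the $N$-scaled $L^1$ mass of the Dirichlet products stays bounded is exactly the right way to make the replacement of $\widehat{\rho}^M_{jk}(\thb{j}{\cdot})$ by $\rho_{jk}(\cdot)$ rigorous, a step the paper leaves implicit. The identification of the four kernel-argument patterns with the four limits $\tilde{\theta},\acute{\theta},\check{\theta},\grave{\theta}$ and the corresponding placements of $c_{jl}c_{km}$ versus $c_{jm}c_{kl}$ is also correct.
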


In view of (\ref{rep.AVAR.multi}), theorem \ref{thm.multi} follows from (\ref{null.o(0)}), (\ref{null.o(1)}), lemma \ref{lem.stable}, \ref{lem.null.multi} and \ref{lem.AVAR.multi}; proposition \ref{prop.noncenter} follows from (\ref{null.o(0)}), (\ref{limit.o(1)}), lemma \ref{lem.stable}, \ref{lem.null.multi} and \ref{lem.AVAR.multi}. 

The stable convergence and the asymptotic variance of $\overline{\underline{S}}(2)^{n,N,M}_T$ can be shown by an analogous derivation, from which proposition \ref{prop.sqrtn} follows.

%\newpage
\bibliographystyle{apa}
\bibliography{../Reference}
\end{document}